\renewcommand{\bar}[1]{\overline{#1}}
\renewcommand{\theequation}{\thesection.\@arabic\c@equation}
\newcommand\Nopagebreak{\@nobreaktrue\nopagebreak}
\newcommand{\com}[1]{
}
\newcommand{\bm}{\begin{matrix}}
\newcommand{\enm}{\end{matrix}}
\newcommand{\bp}{\begin{pmatrix}}
\newcommand{\ep}{\end{pmatrix}}
\newcommand{\bsp}{\begin{split}}
\newcommand{\esp}{\end{split}}
\newcommand{\bc}{\begin{center}}
\newcommand{\ec}{\end{center}}
\newcommand{\ol}{\overline}
\newcommand{\liealgebra}[1]{{\mathfrak {#1}}}
\newcommand{\gl}{\liealgebra{gl}}
\newcommand{\liegroup}[1]{{\operatorname{#1}}}
\newcommand{\K}{\liegroup{K}}
\newcommand{\GL}{\liegroup{GL}}
\newcommand{\Un}{\liegroup{U}}
\newcommand{\Ad}{\liegroup{Ad}}
\DeclareMathOperator{\rank}{rank}
\renewcommand{\Re}{\operatorname{Re}}
\renewcommand{\Im}{\operatorname{Im}}
\newcommand{\im}{\operatorname{im}}
\newcommand{\eps}{\epsilon}
\newcommand{\R}{\mathbb R}
\newcommand{\C}{\mathbb C}
\newcommand{\Z}{\mathbb Z}
\newcommand{\N}{\mathbb N}
\newcommand{\mcl}{\mathcal L}
\newcommand{\cp}[1]{{\C \mathbb{P}^{#1}}}
\newcommand{\al}{\alpha}
\newcommand{\tr}{\rm tr}
\newcommand{\la}{\lambda}
\newcommand{\lglc}{{\mcl^- \GL_n \C}}
\newcommand{\lglr}{{\mcl^- \GL_n \R}}
\newcommand{\lupq}{{\mcl^- \Un_{p,q} }}
\newcommand{\lupp}{{\mcl^- \Un_{p,p} }}
\newcommand{\lun}{{\mcl^- \Un_{n} }}
\newcommand{\ii}{\mathrm i \, }
\def\Id{\operatorname{I}}
\newcommand{\bpm}{\begin{pmatrix}}
\newcommand{\epm}{\end{pmatrix}}
\theoremstyle{plain}
\newtheorem{thm}{Theorem}[section]
\newtheorem{lem}[thm]{Lemma}
\newtheorem{prop}[thm]{Proposition}
\newtheorem{rem}[thm]{Remark}
\theoremstyle{definition}
\newcommand{\be}{\begin{equation}}
\newcommand{\ee}{\end{equation}}
\newcommand{\bs}{\begin{section}}
\newcommand{\es}{\end{section}}
\newcommand{\bss}{\begin{subsection}}
\newcommand{\ess}{\end{subsection}}
\newcommand{\bit}{\begin{itemize}}
\newcommand{\eit}{\end{itemize}}
\def\pr{\mathbb{P}}
\def\Aut{\operatorname{Aut}}
\def\diag{\operatorname{diag}}
\def \l {\lambda}
\def \a {\alpha}
\begin{document}
\title{Projective loops generate rational loop groups}

\author{Gang Wang}
\address{School of Mathematics, Shandong University, Jinan, Shandong Province, China}
\email{2017018@dgut.edu.cn}
\author{Oliver Goertsches}
\address{Fachbereich Mathematik und Informatik der Philipps-Universit\"at Marburg - Hans-Meerwein-Strasse 6, Marburg, Germany}
\email{goertsch@mathematik.uni-marburg.de}
\author{Erxiao Wang}
\address{Department of Mathematics, Hong Kong University of Science \& Technology, Clear Water Bay, Kowloon, Hong Kong} 
\email{maexwang@ust.hk} 

\footnote{ {\it 2010 Mathematics Subject Classification}. Primary: 22E67, 37K35; Secondary: 53C43, 53AXX.}
\footnote{ {\it Key words and phrases}. Loop group, integrable system, dressing action, B\"acklund transformation, generator theorem.} 

\begin{abstract}
Rational loops played a central role in Uhlenbeck's construction of harmonic maps into $\Un_n$ (chiral model in physics), and they are generated by simple elements with one pole and one zero constructed from Hermitian projections. It has been believed for long time that nilpotent loops should be added to generate rational loop groups with noncompact reality conditions. We prove a somewhat unexpected theorem that projective loops are enough to generate the rational loop groups $\lglc$, $\lglr$, and $\lupq$. 
\end{abstract}

\maketitle


\section{Introduction}


One motivation to study generator theorems for rational loop groups is clear: the dressing action of any rational loop on the solution space of an integrable PDE system or associated geometric problem can be computed as compositions of the actions of simple generators. Ideally the geometric transformations corresponding to these simple generators might also be easy to control. Thus a brief review of dressing and geometric transformations follows. 


The subject of geometric transformations, generating new solutions of geometric problems from any given one such as B\"acklund transformations, has a long history and was a central topic of classic differential geometry. The main idea came from the geometry of line or sphere congruence. In contrast, the original idea of dressing to generate new solutions of soliton equations or integrable systems was quite recent, and goes back to Zakharov-Shabat \cite{Zak79}. It looks simple in algebra: Given unique group factorizations via two subgroups in different orders $G=G_- \cdot G_+ = G_+ \cdot G_-$, e.g. $g_- g_+=\tilde{g}_+ \tilde{g}_-$. Then $g_- \ast g_+ := \tilde{g}_+$ defines an action of $G_-$ on $G_+$, named dressing. In our context $G_-$ and $G_+$ are usually two complementary subgroups of a loop group, for example $\mcl^- \GL_n \C$ (holomorphic near and normalized at $\infty$) and $\mcl^+ \GL_n \C$ (holomorphic on $\C$) are two complementary subgroups of $\mcl \GL_n \C$. Then Birkhoff Factorisation Theorem guarantees that the dressing action is well-defined at least locally.  

But the link between dressing and classical geometric transformations is not fully justified until the work of Terng-Uhlenbeck \cite{Ter00}. It is based on the dressing of a very simple rational loop in $\mcl^- \GL_n \C$: 
 \[
  p_{\al,\beta,V,W} = \left(\frac{\la-\al}{\la-\beta}\right)\pi_V +\pi_W \;  ,
  \]
  where $\C^n=V\oplus W$ and $\pi_V$, $\pi_W$ are projections onto $V$, $W$ along the other subspace. Then its dressing on  $h \in \mcl^+ \GL_n \C$ has a simple formula by residue calculus: $p_{\al,\beta,V,W} \ast h = p_{\al,\beta,V,W} \cdot h \cdot p_{\beta,\al,\tilde{V},\tilde{W}} \in \mcl^+ \GL_n \C$, where $\tilde{V}:=h(\alpha)^{-1} (V)$ and $\tilde{W}:=h(\beta)^{-1} (W)$, as long as $\tilde{V}\oplus \tilde{W}=\C^n$. In addition to many Darboux-B\"acklund transformations identified as dressing in  \cite{Ter00}, Ribaucour transformations in Terng-Wang \cite{Ter081}, Tzitz\'eica transformations in Wang \cite{Wan06} and Lin-Wang-Wang \cite{Lin152}, etc., have also been identified as dressing.   Dressing of rational loops also played an important role in Uhlenbeck's construction of harmonic maps into $\Un_n$ (or classical solutions of chiral model in physics) \cite{Uhl89}. It has been proved there that $\lun$ is generated by simple elements of the form
\[
g_{\alpha,\pi} = \left( \frac{\lambda-\alpha}{\lambda-\bar{\alpha}}\right)  \pi + (\Id-\pi),
\]
where $\pi$ is any Hermitian orthogonal projection of $\C^n$ onto some subspace.

Motivated by this work, Donaldson-Fox-Goertsches \cite{Neil08} found generators for the rational loop groups of all classical compact matrix groups and $G_2$. Then Goertsches \cite{Goe13} showed any element in the full rational loop group $\lglc$ can be written as  products of $p_{\al,\beta,V,W}$ and
  \[
  m_{\al,k,N} = \Id+\left(\frac{1}{\la-\al}\right)^k N \; ,
  \]
where $k$ is a positive integer and $N$ is a two-step nilpotent map, i.e. $N^2=0$. 

The new type of loops $m_{\al,k,N}$ has the special feature of having only one singularity and is constructed from nilpotent maps instead of projections. However their dressing formulas are quite complicated as showed in \cite{Goe13}.  We simply call $p_{\al,\beta,V,W}$ and $m_{\al,k,N}$  projective and nilpotent loops respectively. Goertsches also showed a similar result for $\lglr$. Such nilpotent loops are naturally added as generators from the point of view of Lie theory as an analogue of Iwasawa decomposition for noncompact real semisimple Lie groups. 

While our motivation in dressing or geometry is clear, our main theorems in this paper are still a little surprising from the point of view of Lie theory: some projective loops are enough to generate the rational loop groups $\lglc$, $\lglr$, and $\lupq$. Thus any rational dressing can be computed via iterations of projective ones, which can be carried out efficiently by computer software. 

In Section \ref{sec:GL} we first review Goertsches' theorems and summarize his technical induction involving nilpotent loops as a theorem: The polynomial loops $\GL_n \C [ ( \lambda - \alpha )^{-1} ]$, or the rational loops with only one fixed singularity are generated by nilpotent loops with the same pole. Then we prove our first two theorems that these nilpotent loops are in fact products of projective loops, which implies $\lglc$ and $\lglr$ are generated by projective loops alone despite their non-compactness.

The rest of the paper is devoted to the extension of these results to $\lupq$. In Section \ref{Sec:faforNUpq} we show the rational loops in $\lupp$ with only one fixed singularity are generated by similar nilpotent loops $m_{\al,k,N}$, while the rational loops in $\lupq$ ($p\ne q$) with only one fixed singularity are generated by $m_{\al,k,N}$ and a new type of nilpotent loops $n_{\al,k,N}$ where $N$ is $3$-step nilpotent. In Section \ref{sec:pmnGUpq} we apply standard induction to give a table of generators for $\lupq$ with both projective and nilpotent loops.  Finally in Section \ref{sec:pGUpq} we show that these nilpotent loops as generators turns out also to be products of projective loops in $\lupq$. While the proofs seem easy for $\lglc$, $\lglr$ cases from the basic example,  the proof for $\lupq$ case turns out to be quite involved. 

In the last section we propose a few open problems. For example, it would be desirable to have a direct proof of our generating theorems without taking the detour through the ``unnecessary'' nilpotent loops.

\bigskip 

\noindent
\textbf{Preliminaries and Notation:}\label{PTNA}

For convenience, this paper uses $\lglc$ to denote the full rational loop group, i.e., the set of all non-degenerate $n \times n$ matrices of rational functions, normalized at $\infty$: $g(\infty)=\Id$. Let  $\lglr$ denote its subgroup satisfying the $\GL_n \R$-reality condition: $ \tau( g(\bar{\lambda}) )= g(\lambda)$, where $\tau(A)=\bar{A}$ is the antiholomorphic involution fixing $\GL_n \R$. For real parameters, such rational loops naturally take values in $\GL_n \R$.

Assume that $0<p\le q$, $p+q=n$. The group $\Un_{p,q}$ is the fixed point set of the antiholomorphic involution $\tau(A)=(A^*)^{-1}$, where $A^*$ is the adjoint of $A$ with respect to the inner product $\langle v,w\rangle=-\sum_{i=1}^p \ol{v_i}{w_i}+\sum_{i=p+1}^{p+q} \ol{v_i}{w_i}$. Denoting $s=\diag(-\Id_p, \Id_q)$ we have $A^*=s \bar{A}^t s$. Then $\lupq$ denotes the subgroup of $\lglc$ satisfying the $\Un_{p,q}$-reality condition:
$ \tau( g(\bar{\lambda}) )= g(\lambda)$. For real parameters these loops take values in  $\Un_{p,q}$.

We say that $\alpha\in \cp{1}$ is a pole of $g\in \lglc$ if $\alpha$ is a pole of some matrix entry of $g$. If $\alpha$ is not a pole of g, we say that $\alpha$ is a zero of $g$ if $g(\alpha)$  is singular. Finally, $\alpha$ is a singularity of $g$ if it is a pole or a zero.
If $\alpha\in \cp{1}$ is a pole of $g$, there is a unique number $k\ge1$ such that the map $(\lambda-\alpha)^{k-1} g$ has a pole at $\alpha$, but $(\lambda-\alpha)^k g$ has no pole at $\alpha$. If we denote the evaluation of $(\lambda-\alpha)^k g$ at $\alpha$ by $A$, we call the pair $(k,\rank A)$ the \textbf{pole data} of $g$ at $\alpha$. There is a natural ordering on the possible pole data: $(k_1, n_1) < (k_2, n_2)$ if and only if $k_1 < k_2$ or ($k_1 = k_2$ and $n_1 < n_2$). It thus makes sense to compare degrees of poles.

\section{Factorization of nilpotent loops}\label{sec:GL}

In \cite{Goe13} the second named author obtained generating theorems for the full rational loop group of $\GL(n,\C)$ and for the subgroup of loops satisfying the reality condition with respect to the noncompact real form $\GL(n,\R)$. The new feature of these theorems, compared to other generating theorems, was the occurrence of a new type of simple loops $m_{\al,k,N}$ with only one singularity, see Table \ref{tab:GLnC} below.

For the full rational loop group, the theorem read
\begin{thm}[{\cite[Theorem 3.1]{Goe13}}]\label{thm:oldfullgenthm}
The rational loop group $\lglc$ is generated by the simple elements given in Table \ref{tab:GLnC}.
\end{thm}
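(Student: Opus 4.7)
The plan is to proceed by induction on a complexity invariant of a rational loop $g \in \lglc$, successively multiplying $g$ by inverses of simple elements from Table~\ref{tab:GLnC} until one reaches the identity. A natural complexity is the multiset of pole data $(k_\alpha, \rank A_\alpha)$ across all singularities of $g$ on $\cp{1}$, compared lexicographically. In the base case, $g$ has no singularities at all, and the normalization $g(\infty) = \Id$ forces $g \equiv \Id$.

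For the inductive step, I would select a singularity $\alpha$ of maximal pole order $k$ and expand $g(\lambda) = \sum_{j \geq -k} A_j (\lambda - \alpha)^j$ with $A_{-k}$ of some rank $r < n$. The aim is to find a simple generator $h$ whose only singularity is at $\alpha$ and such that multiplication of $g$ by $h^{-1}$ (on the appropriate side) strictly decreases the complexity at $\alpha$ without creating worse singularities elsewhere. For low pole orders, or pole configurations whose residue admits a clean splitting $\C^n = V \oplus W$ with $\im(A_{-k}) \subseteq V$, a projective loop $p_{\alpha, \beta, V, W}$ with $\beta$ chosen generically suffices. For higher-order, rank-deficient poles where a single projective shift cannot strip the singularity in one step, a nilpotent factor $m_{\alpha, k, N} = \Id + (\lambda - \alpha)^{-k} N$ with $N^2 = 0$ must be invoked, with $\im(N)$ placed inside a suitable common kernel of the negative Laurent coefficients of $g$ at $\alpha$, so that multiplication by $m_{\alpha, k, N}^{-1}$ does not create a worse pole.

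The main obstacle I anticipate is the linear-algebraic lemma that such a reducing factor always exists. Concretely, one must show that the Laurent coefficients $A_{-k}, A_{-k+1}, \dots, A_{-1}$ satisfy a compatibility condition that permits the construction of a nonzero two-step nilpotent $N$, or dually of a projection onto a compatible splitting, whose inverse multiplied into $g$ strictly decreases the complexity. This reduces to a careful case analysis on the Jordan-type structure of the leading coefficient together with the chain of lower-order ones, and it is the technical heart of the induction. Once this reduction step is in place, the well-ordered complexity decreases strictly at each step, so the induction terminates in finitely many steps, writing $g$ as a finite product of the simple elements listed in Table~\ref{tab:GLnC}.
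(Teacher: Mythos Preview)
The paper does not itself prove this theorem---it is quoted from \cite{Goe13}---but the paper describes the structure of that proof and reproduces the hardest part verbatim in the $\Un_{p,q}$ setting (see Theorem~\ref{Upq:facnalstonil} and the proof of Theorem~\ref{Upq:ge}, which explicitly follow ``the second part of the proof of \cite[Theorem 3.1]{Goe13}''). Against that, your outline has the right overall architecture (induction on a complexity, peel off simple factors) but misidentifies the key case distinction, and this causes a genuine gap.

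The bifurcation between projective and nilpotent factors is \emph{not} ``low pole order vs.\ high-order rank-deficient pole''. It is ``at least two singularities vs.\ exactly one singularity''. When $g$ has two distinct singularities $\alpha,\beta$, one takes $V\supseteq \im g_{-k}$ and multiplies by $p_{\alpha,\beta,V,W}$; the pole data at $\alpha$ drops, and nothing worse is created at $\beta$ because $\beta$ was already singular. Your proposal to choose $\beta$ ``generically'' breaks this: a generic $\beta$ is a \emph{new} singularity of $p_{\alpha,\beta,V,W}\,g$, so your multiset-of-pole-data invariant need not decrease and the induction need not terminate. Choosing $\beta$ among the existing singularities is not a cosmetic detail; it is what makes the projective step an honest reduction.

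Once only one singularity $\alpha$ remains, projective loops are unusable for exactly this reason, and this is where the nilpotent loops enter---not as a fallback for ``higher-order'' poles but as the only available move. The induction here uses a different and finer invariant than pole data: with $g(\lambda)=\sum_{j\ge 0}(\lambda-\alpha)^{-j}A_j$, one sets $K_i=\bigcap_{j\ge i}\ker A_j$ and orders the tuples $\varepsilon(g)=(\dim K_{i+1}-\dim K_i)_i$ lexicographically from the top (see the proof of Theorem~\ref{Upq:facnalstonil}). The nilpotent $N$ is not chosen with $\im N$ inside a common kernel as you suggest; rather one picks indices $k<l$ and a vector $v\in K_l\setminus K_{l-1}$ and defines $N$ so that $N(A_{k-1}v)=-A_{l-1}v$ and $N$ kills the relevant images, which forces $\varepsilon(m_{\alpha,l-k,N}\,g)<\varepsilon(g)$. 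Your placeholder ``compatibility condition on the $A_{-j}$'' does not capture this mechanism.
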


\begin{table}[h]
\centering
\begin{tabular}{|c|c|c|}
\hline
Name & Definition & Conditions\\
\hline
$p_{\al,\beta,V,W}$ & $\left(\frac{\la-\al}{\la-\beta}\right)\pi_V +\pi_W$ & $\alpha\neq \beta\in \C,\, \C^n=V\oplus W$ \\
\hline
$m_{\al,k,N}$ & $\Id+\left(\frac{1}{\la-\al}\right)^k N$ & $\alpha\in \C,\, N:\C^n\to \C^n,\, N^2=0$\\
\hline
\end{tabular}\\[0.5cm]
\caption{Generators for $\lglc$}
\label{tab:GLnC}
\end{table}

The hardest part in the induction proof of this theorem is to deal with the special loop with only one singularity. We summarize the algebraic property of nilpotent loops as a separate theorem:

\begin{thm}[]
The rational loops in $\lglc$ and $\lglr$ with only one fixed singularity at $\a$, or the polynomial loops $\GL_n  [ ( \lambda - \alpha )^{-1} ]$:
\[
g(\la)=(\la-\al)^{-r}A_r+\ldots+(\la-\al)^{-1}A_1+I,
\]
are generated by the nilpotent loops $m_{\a,k,N}$ in Tables \ref{tab:GLnC} respectively \ref{tab:glnr}.
\end{thm}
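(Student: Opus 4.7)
The plan is to argue by strong induction on the pole data $(r, s)$ of $g$ at $\alpha$, where $r$ is the pole order and $s = \rank A_r$, ordered lexicographically as in the preliminaries. The base case $(0, 0)$ is $g = \Id$. For the inductive step I would find a nilpotent loop $h = m_{\alpha, k, N}$ (with $N^2 = 0$) so that $hg$ still has $\alpha$ as its only singularity but strictly smaller pole data; then by induction $hg$ is a product of nilpotent loops, and since $h^{-1} = m_{\alpha, k, -N}$ is itself a nilpotent loop, so is $g = h^{-1}(hg)$.

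The key linear-algebraic input is that $\det g \equiv 1$: indeed $\det g$ is a rational function on $\cp{1}$ with no zeros or poles off $\alpha$ (since $g$ has none and is invertible there) and value $1$ at $\infty$, so it is constantly $1$. Expanding in powers of $x := (\lambda-\alpha)^{-1}$, the top coefficient of $\det g$ is $\det A_r$, which forces $A_r$ to be singular, so $s < n$, and this is exactly what leaves room for rank-one nilpotent modifications.

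The case $r = 1$ can then be handled directly: $\det(I + xA_1) \equiv 1$ forces all eigenvalues of $A_1$ to be zero, so $A_1$ is nilpotent. After conjugating by a constant matrix to bring $A_1$ into Jordan form, each block $J$ of size $m$ factors as
\[
I + xJ = (I + xE_{m-1,m})(I + xE_{m-2,m-1}) \cdots (I + xE_{1,2}),
\]
ordered by decreasing row index so that all higher-order cross products of matrix units vanish. For $r \geq 2$ I would build a rank-one $N = u\phi^T$ with $\phi$ annihilating $\im A_r$ and $\phi(u) = 0$ (ensuring $N^2 = 0$), together with a suitable $k \leq r$, so that $m_{\alpha, k, N}^{\pm 1} g$ either cancels a direction of $A_r$ (lowering $s$) or, once $s = 0$, drops $r$ to $r-1$. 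The $\lglr$ case proceeds by the same scheme with $N$ chosen to respect the reality condition (real for $\alpha \in \R$, conjugate-paired with $\bar\alpha$ otherwise).

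The main obstacle I anticipate is the rank-reduction step when $r \geq 2$: the leading coefficient $A_r$ is singular but need not itself be $2$-step nilpotent — one can exhibit $2 \times 2$ examples with $A_r^2 \neq 0$ — so a single $m_{\alpha, r, -A_r}$ does not cancel the leading term in one move. One must instead engineer the rank-one nilpotent $N$ using the interplay between $A_r$ and the lower $A_i$, and verify at each step that no new singularities off $\alpha$ appear and that the pole order does not accidentally climb to $r + k$.
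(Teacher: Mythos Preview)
Your overall strategy---induction, multiplying by nilpotent loops to simplify $g$---matches the paper's approach, and your handling of the $r=1$ case is correct and clean. However, the induction invariant you propose, the pole data $(r,\rank A_r)$, is too coarse, and this is a genuine gap rather than a missing detail.

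Concretely, take $n=3$ and
\[
g(\la)=\Id+(\la-\al)^{-1}e_2e_3^T+(\la-\al)^{-2}e_1e_2^T,
\]
which has $\det g\equiv 1$ and pole data $(2,1)$. For $m_{\al,k,N}\,g$ to keep pole order $\le 2$ one needs $NA_2=0$ (if $k=1$) or $NA_1=NA_2=0$ (if $k=2$); for $k\ge 3$ one needs $N=0$. In the $k=1$ case the new leading coefficient is $A_2+NA_1$, and killing it would force $NA_1=-A_2$; but $NA_1$ factors through $\im A_1=\C e_2$, so it vanishes on $e_2$, whereas $A_2e_2=e_1\neq 0$. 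The $k=2$ case forces $N=-A_2$, and then $NA_1=-e_1e_3^T\neq 0$. Thus no single left multiplication $m_{\al,k,N}\,g$ strictly lowers the pole data of this $g$, so your induction stalls. (Right multiplication happens to work here, but your scheme as written uses $hg$.)

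The paper (summarizing \cite{Goe13}, with full details in the $\Un_{p,q}$ analogue, Theorem~\ref{Upq:facnalstonil} and Propositions~\ref{prop:p=q}--\ref{prop:pneqq}) instead inducts on a finer invariant: set $K_i=\bigcap_{j\ge i}\ker A_j$ and take $\eps(g)=(\dim K_{i+1}-\dim K_i)_{i\ge 0}$, ordered lexicographically from the top index down. In the example above, left-multiplying by $m_{\al,1,-e_2e_3^T}$ yields $\Id+(\la-\al)^{-2}e_1e_2^T$: the pole data is unchanged at $(2,1)$, but $\eps$ drops from $(1,1,1)$ to $(2,0,1)$. The construction of $N$ at each step uses the auxiliary filtration $V_i=\sum_{j\ge i}A_j(K_{j+1})$ and arranges $\ker N\supset V_1$ (or an appropriate $V_i$), not merely $\ker N\supset\im A_r$; this is exactly what prevents the pole order from climbing and what makes $\eps$ decrease. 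Your intuition that one must exploit ``the interplay between $A_r$ and the lower $A_i$'' is correct---the filtration $K_\bullet$ is the structure that organizes that interplay.
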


In this section we aim to show that the nilpotent loops $m_{\al,k,N}$ are in fact not necessary to generate this loop group. Namely, we show:
\begin{thm}\label{sec2thm1} The rational loop group $\lglc$ is generated by the simple elements $p_{\alpha,\beta,V,W}$ alone.
\end{thm}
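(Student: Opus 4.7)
The plan is to leverage the preceding theorem (attributed to Goertsches), which says $\lglc$ is generated by $p_{\alpha,\beta,V,W}$ together with $m_{\alpha,k,N}$, so it suffices to express each nilpotent loop $m_{\alpha,k,N}$ as a product of projective loops. I would argue by induction on $k\ge 1$.

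For the base case $k=1$, given a 2-step nilpotent $N$, I would fix $\beta\ne\alpha$, choose any subspace $W$ with $\im N\subseteq W\subseteq\ker N$ and a complement $V$, and set $V':=\{v+\tfrac{1}{\alpha-\beta}N(v):v\in V\}$. Let $\pi_V,\pi_W$ denote the projections for $V\oplus W$ and $\pi_{V'},\pi'_W$ those for $V'\oplus W$. Using $\pi_V+\pi_W=\Id$ and $\pi_{V'}+\pi'_W=\Id$ together with the elementary identities $\pi_V\pi'_W=0$ and $\pi_W\pi'_W=\pi'_W$, the product collapses to
\[p_{\alpha,\beta,V,W}\cdot p_{\beta,\alpha,V',W}=\Id+\frac{\alpha-\beta}{\lambda-\alpha}\,M,\qquad M:=\pi_W\pi_{V'}=\pi_W-\pi'_W,\]
and $M$ is automatically 2-step nilpotent with $(\alpha-\beta)M=N$, so this equals $m_{\alpha,1,N}$.

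For the inductive step, I exploit that the projections are idempotent and annihilate each other ($\pi_V^2=\pi_V$, $\pi_W^2=\pi_W$, $\pi_V\pi_W=\pi_W\pi_V=0$) to obtain $(p_{\alpha,\beta,V,W})^k=\bigl(\tfrac{\lambda-\alpha}{\lambda-\beta}\bigr)^k\pi_V+\pi_W$ and likewise $(p_{\beta,\alpha,V',W})^k=\bigl(\tfrac{\lambda-\beta}{\lambda-\alpha}\bigr)^k\pi_{V'}+\pi'_W$. Running the same cancellation as in the base case yields
\[(p_{\alpha,\beta,V,W})^k\,(p_{\beta,\alpha,V',W})^k=\Id+\Bigl(\bigl(\tfrac{\lambda-\beta}{\lambda-\alpha}\bigr)^k-1\Bigr)M,\]
and expanding $(1+\tfrac{\alpha-\beta}{\lambda-\alpha})^k$ via the binomial theorem shows the right-hand side equals $\Id+\sum_{j=1}^{k}\binom{k}{j}(\alpha-\beta)^j(\lambda-\alpha)^{-j}M$. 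Since $M^2=0$ the factors $m_{\alpha,j,cM}$ commute pairwise and their nilpotent parts add, so this is also $\prod_{j=1}^{k}m_{\alpha,j,\binom{k}{j}(\alpha-\beta)^j M}$. Isolating the top-order piece gives
\[m_{\alpha,k,(\alpha-\beta)^k M}=(p_{\alpha,\beta,V,W})^k(p_{\beta,\alpha,V',W})^k\cdot\prod_{j=1}^{k-1}m_{\alpha,j,-\binom{k}{j}(\alpha-\beta)^j M}.\]
Given an arbitrary 2-step nilpotent $N$, I arrange the base-case data so that $M=N/(\alpha-\beta)^k$, making the left-hand side precisely $m_{\alpha,k,N}$; by the inductive hypothesis every factor on the right is a product of projective loops, closing the induction.

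The main obstacle is finding an identity suitable for driving the induction: a direct attempt to build $m_{\alpha,2,N}$ from $m_{\alpha,1,A}\cdot m_{\alpha,1,B}$ forces $B=-A$ in order to kill the simple-pole term, but then the would-be double-pole term $AB=-A^2=0$ also vanishes. Raising a single projective pair to the $k$-th power before multiplying sidesteps this deadlock: the binomial $(1+\tfrac{\alpha-\beta}{\lambda-\alpha})^k$ produces every pole order $1\le j\le k$ simultaneously, all with the same rank-bounded nilpotent $M$, and this one identity is exactly what the induction requires.
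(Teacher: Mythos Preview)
Your argument is correct and genuinely different from the paper's. Both proofs reduce, via Theorem~\ref{thm:oldfullgenthm}, to factoring $m_{\alpha,k,N}$ into projective loops and proceed by induction on $k$, but the mechanisms differ. The paper first conjugates $N$ to Jordan normal form, reducing to the single $2\times 2$ block $J=\left(\begin{smallmatrix}0&1\\0&0\end{smallmatrix}\right)$, then writes $m_{\alpha,n,J}$ as an explicit product involving the rational function $\frac{\beta-\alpha}{(\lambda-\alpha)[(\lambda-\alpha)^n-(\lambda-\beta)^n]}$; because $\alpha$ is only a simple root of the denominator, partial fractions produce factors $m_{\beta_i,l_i,c_iJ}$ with $l_i<n$, and induction applies. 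Your route is coordinate-free: the single identity $(p_{\alpha,\beta,V,W})^k(p_{\beta,\alpha,V',W})^k=\Id+\bigl((\tfrac{\lambda-\beta}{\lambda-\alpha})^k-1\bigr)M$, together with the binomial expansion and $M^2=0$, expresses the left side as $\prod_{j=1}^k m_{\alpha,j,\binom{k}{j}(\alpha-\beta)^jM}$, from which the top-order factor peels off directly. What your approach buys is a cleaner argument avoiding Jordan forms and partial fractions, and it works uniformly for arbitrary two-step nilpotent $N$; the paper's approach gives more explicit $2\times 2$ factorizations, which can be useful when one wants concrete formulas. One minor point worth making explicit in your write-up: the fact that $V'\oplus W=\C^n$ (so that $p_{\beta,\alpha,V',W}$ is actually defined) follows because $v\mapsto v+\tfrac{1}{\alpha-\beta}Nv$ is injective from $V$ and has image meeting $W$ only in $0$.
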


\begin{proof}
Using Theorem \ref{thm:oldfullgenthm} we only need to show that any nilpotent loop $m_{\al,k,N}$ can be written as a product of simple elements of type $p_{\alpha,\beta,V,W}$. We do this by induction on $k$.

Any two-step nilpotent endomorphism $N$ has a Jordan normal form $SNS^{-1}=J$ consisting only of Jordan blocks $\bpm 0\epm$ and $\bpm 0 & 1 \\0 & 0 \epm$. As then $Sm_{\a,k,N}(\l)S^{-1}=m_{\a,k,J}(\l)$, we only have to prove that $m_{\a,k,J}(\l)=I+\frac{1}{(\l-\a)^k}J$ can be factored as the product of projective loops, and for this it suffices to verify it for the case that $J=\bpm 0 &1 \\0 & 0 \epm $.

For $k=1$, we choose an auxiliary $\beta\neq\a$, write
 \begin{eqnarray*}
 m_{\a,1,J}(\l)=\bpm 1 & \frac{1}{\l-\a} \\0 & 1 \epm=\bpm \frac{\l-\beta}{\l-\a} & 0 \\0 & 1 \epm\bpm\frac{\l-\a}{\l-\beta}& \frac{1}{\l-\beta} \\0 & 1 \epm.
 \end{eqnarray*}
and observe that
 \begin{eqnarray*}
 \bpm \frac{\l-\beta}{\l-\a} & 0 \\0 & 1 \epm \text{ and } \bpm\frac{\l-\a}{\l-\beta}& \frac{1}{\l-\beta} \\0 & 1 \epm=\frac{\l-\a}{\l-\beta} \bpm 1& \frac{1}{\beta-\a}\\0 & 0\epm+\bpm 0& -\frac{1}{\beta-\a}\\0 & 1\epm
 \end{eqnarray*}
 are both projective loops. Assuming the result holds for $k=1,2,\ldots ,n-1$, we now prove it for $k=n$. Choosing again any $\beta\neq\a$, we compute
 \begin{align*}
  &m_{\a,n,J}(\l)=\bpm 1 & \frac{1}{(\l-\a)^n} \\0 & 1 \epm \notag\\
                  &\,\,=\bpm \frac{(\l-\beta)^n}{(\l-\a)^n} &0 \\0 & 1 \epm\bpm  \frac{(\l-\a)^n}{(\l-\beta)^n} & \frac{1}{(\l-\beta)^n} \\0 & 1 \epm\notag\\
                  &\,\,=\bpm \frac{(\l-\beta)^n}{(\l-\a)^n} &0 \\0 & 1 \epm\left[\frac{(\l-\a)^n}{(\l-\beta)^n}\bpm 1 &\frac{1}{(\l-\a)^n-(\l-\beta)^n} \\0 & 0 \epm+\bpm 0 &-\frac{1}{(\l-\a)^n-(\l-\beta)^n} \\0 & 1\epm\right]\notag\\
                  &\,\,=\bpm \frac{\l-\beta}{\l-\a} &0 \\0 & 1 \epm^{n}\left[\frac{\l-\a}{\l-\beta}\bpm 1 &\frac{1}{(\l-\a)^n-(\l-\beta)^n} \\0 & 0 \epm+\bpm 0 &-\frac{1}{(\l-\a)^n-(\l-\beta)^n} \\0 & 1\epm\right]^n.\notag
 \end{align*}
 The expression inside the square brackets factors as
\[ \bpm  \frac{\l-\a}{\l-\beta} &\frac{(\beta-\a)}{(\l-\beta)[(\l-\a)^n-(\l-\beta)^n]} \\0 & 1\epm =\bpm  \frac{\l-\a}{\l-\beta} &0 \\0 & 1\epm \bpm 1 &\frac{(\beta-\a)}{(\l-a)[(\l-\a)^n-(\l-\beta)^n]} \\0 & 1\epm
\]
so we just have to show that
 \begin{eqnarray} \label{nilfpr}
 \bpm 1 &\frac{(\beta-\a)}{(\l-\a)[(\l-\a)^n-(\l-\beta)^n]} \\0 & 1\epm
 \end{eqnarray}
can be factored as the product of projective loops. Because $\a$ is only a simple root of the degree $n$ polynomial $(\l-\a)[(\l-\a)^n-(\l-\beta)^n]$, we can use partial fraction decomposition to decompose \eqref{nilfpr} as the product of factors of the form $ \bpm 1 &\frac{c_{i}}{(\l-\beta_{i})^{l_{i}}} \\0 & 1\epm$, where $0<l_{i}<n$, and apply the induction hypothesis, to each factor.
\end{proof}

The generators for $\lglr$ were found similarly in \cite{Goe13}:
 \begin{thm}[{\cite[Theorem 4.1]{Goe13}}]\label{sec2thm3}
 The rational loop group $\lglr$  is generated by the simple elements in Table \ref{tab:glnr}.
 \end{thm}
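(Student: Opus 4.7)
The strategy is to mirror the argument for $\lglc$ in Theorem \ref{thm:oldfullgenthm}, carrying out an induction on the total pole data of $g \in \lglr$ while enforcing the $\GL_n \R$-reality condition $\tau(g(\bar\lambda)) = g(\lambda)$ at every step. The central structural observation is that this condition forces singularities to occur either at real points of $\cp{1}$ or in complex-conjugate pairs, with paired singularities sharing identical pole data. This immediately dictates the two flavors of generators that must appear in Table \ref{tab:glnr}: real-parameter versions of $p_{\alpha,\beta,V,W}$ and $m_{\alpha,k,N}$ for real singularities, together with products of complex-conjugate pairs of such loops for non-real singularities.

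For the inductive step, I would select a singularity $\alpha$ of maximal pole data. If $\alpha \in \R$, the reduction strategy of Theorem \ref{thm:oldfullgenthm} applies almost verbatim: one multiplies $g$ by the inverse of a simple element $g_0 \in \lglc$ with singularity only at $\alpha$, so that $g \cdot g_0^{-1}$ has strictly smaller pole data at $\alpha$ and acquires no new singularities. The new task compared to the complex case is to verify that the subspaces or two-step nilpotent matrices entering $g_0$ can be chosen $\tau$-invariant, so that $g_0$ itself lies in $\lglr$. This is possible because the residue/leading-term data of $g$ at a real singularity is already $\tau$-invariant and hence admits real representatives.

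If instead $\alpha$ is non-real, then $\bar\alpha$ is another singularity of identical pole data, and both must be reduced simultaneously. Here I would choose a complex simple element $g_0 \in \lglc$ supported at $\alpha$ and form the real loop $g_0(\lambda) \cdot \tau(g_0(\bar\lambda))$, which is precisely a paired generator of Table \ref{tab:glnr}. Multiplying $g$ by the inverse of this product decreases the pole data at both $\alpha$ and $\bar\alpha$ while keeping the result inside $\lglr$.

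The main obstacle is the base case, in which $g$ has only a single (real or paired) singularity. This is the $\lglr$-analog of the summary theorem preceding Theorem \ref{sec2thm1}: for a real singularity one must show that loops in $\GL_n \R[(\lambda-\alpha)^{-1}]$ are generated by the real nilpotent $m_{\alpha,k,N}$, while for a conjugate pair one must handle loops in $\GL_n \C[(\lambda-\alpha)^{-1},(\lambda-\bar\alpha)^{-1}] \cap \lglr$ using the paired nilpotent and projective generators. Proving this base case constitutes the technical heart of the argument, where the combinatorial bookkeeping of pole data coupled with the reality constraint becomes most delicate; it is essentially the content of the proof given in \cite{Goe13}.
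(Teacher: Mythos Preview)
The paper does not supply a proof of this theorem; it is quoted verbatim from \cite[Theorem~4.1]{Goe13} and used only as input for the subsequent Theorem~\ref{sec2thm4}. So there is no proof here against which to compare your outline, and you correctly acknowledge at the end that the full argument lives in \cite{Goe13}.

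That said, one point in your sketch would need correction. You describe the base case as including loops whose only singularities form a conjugate pair $\{\alpha,\bar\alpha\}$, to be handled by ``paired nilpotent and projective generators.'' But Table~\ref{tab:glnr} contains no paired nilpotent loops: the sole nilpotent generator is $m_{\alpha,k,N}$ with $\alpha\in\R$ and $\bar N=N$. The reason is that a non-real conjugate pair already furnishes two distinct singularities, so one is never trapped in the one-singularity situation that forces the use of nilpotent loops. In the induction of \cite{Goe13} the genuine base case is a loop with a \emph{single real} singularity---precisely the content of the summary theorem stated just before Theorem~\ref{sec2thm1} in this paper. Loops whose only singularities are a non-real conjugate pair are dealt with inside the inductive step, using the projective $q$- and $r$-type elements alone; no nilpotent factor with complex pole is ever required. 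Apart from this misidentification of the base case, your high-level strategy (treat real and non-real singularities separately, enforce $\tau$-invariance of the auxiliary data, pair up complex factors) matches the approach of \cite{Goe13}.
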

\begin{table}[h]
\begin{tabular}{|c|c|c|}
\hline
Name & Definition & Conditions\\
\hline
$p_{\al,\beta,V,W}$ & $\left(\frac{\la-\al}{\la-\beta}\right)\pi_V +\pi_W$ & \begin{tabular}{c} $\alpha,\beta\in \R,\, \C^n=V\oplus W$ \\ $\bar{V}=V,\, \bar{W}=W$ \end{tabular} \\
\hline
$q_{\alpha,\beta,V,W}$ & $\frac{(\la-\alpha)(\la-\bar{\al})}{(\la-\beta)(\la-\bar{\beta})}\pi_V+\pi_W$ & \begin{tabular}{c} $\al$ or $\beta\notin \R,\, \C^n=V\oplus W$\\ $\bar{V}=V,\, \bar{W}=W$\end{tabular}\\
\hline
$r_{\alpha,\beta,V,W}$ & $\left(\frac{\la-\alpha}{\la-\beta}\right)\pi_V + \pi_W + \left(\frac{\la-\bar{\alpha}}{\la-\bar{\beta}}\right) \pi_{\bar{V}}$ & \begin{tabular}{c} $\C^n=V\oplus W\oplus \bar{V}$ \\ $V\cap \bar{V}=0,\, \bar{W}=W$ \end{tabular} \\
\hline
$m_{\al,k,N}$ & $\Id+\left(\frac{1}{\la-\al}\right)^k N$ & $\al\in\R,\, N^2=0,\, \bar{N}=N$\\
\hline
\end{tabular}\\[0.5cm]
\caption{Generators for $\lglr$}
\label{tab:glnr}
\end{table}
Similarly to the case of the full rational loop group, we can show that the nilpotent loops $m_{\alpha,k,N}$ are not necessary and obtain:
\begin{thm}\label{sec2thm4}
The rational $\lglr$  is generated by the simple elements $p_{\alpha,\beta,V,W}$, $q_{\alpha,\beta,V,W}$ and $r_{\alpha,\beta,V,W}$ alone.
\end{thm}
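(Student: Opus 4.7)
The plan is to follow the inductive strategy of Theorem \ref{sec2thm1}, verifying that the factorization goes through over $\R$ at the cost of occasional $q$-loops. By Theorem \ref{sec2thm3} it suffices to factor each real nilpotent loop $m_{\alpha,k,N}$ (with $\alpha\in\R$, $\bar N=N$, $N^2=0$). A real two-step nilpotent $N$ has a real Jordan form $SNS^{-1}=J$ with $S\in\GL_n(\R)$, consisting of $1\times 1$ zero blocks and $2\times 2$ blocks $\bpm 0 & 1 \\ 0 & 0 \epm$; conjugation by the real matrix $S$ preserves each of the three projective families and their reality conditions, so one reduces to a single $2\times 2$ real Jordan block $J$ embedded trivially in $\C^n$.

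With a real auxiliary $\beta\neq\alpha$, the matrix identities from the proof of Theorem \ref{sec2thm1} apply verbatim. The $k=1$ case produces two manifestly real $p$-loops; for $k=n\geq 2$ the outer factors are again real $p$-loops, and the task reduces to factoring the real unipotent residue
\[
\bpm 1 & \dfrac{\beta-\alpha}{(\lambda-\alpha)\bigl[(\lambda-\alpha)^n-(\lambda-\beta)^n\bigr]} \\ 0 & 1 \epm.
\]
The denominator is a real polynomial with simple roots, so a partial fraction decomposition over $\R$ splits the entry into real simple terms $\tfrac{c}{\lambda-\gamma}$ ($\gamma,c\in\R$) and complex-conjugate-pair terms $\tfrac{c}{\lambda-\gamma}+\tfrac{\bar c}{\lambda-\bar\gamma}$ ($\gamma\in\C\setminus\R$). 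Since $2\times 2$ upper-unipotent matrices multiply by adding off-diagonal entries, this sum becomes a product of matrix factors, and each real summand is a real $m_{\gamma,1,J'}$ handled by the $k=1$ case using only real $p$-loops. For $n=2$ one checks that both poles are real, so $k=2$ too closes entirely within real $p$-loops.

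The essential new step is to factor each complex-conjugate-pair summand
\[
M := \bpm 1 & \tfrac{c}{\lambda-\gamma}+\tfrac{\bar c}{\lambda-\bar\gamma} \\ 0 & 1 \epm, \qquad \gamma\in\C\setminus\R,
\]
which arises only for $n\geq 3$. The key identity is $p_{\beta',\gamma,V,W}\,p_{\beta',\bar\gamma,V,W}=q_{\beta',\gamma,V,W}$, valid for any real $\beta'$ and real complementary projections $V,W$: a product of two complex diagonal $p$-loops collapses into a single real $q$-loop. Writing $M=m_{\gamma,1,N}\cdot m_{\bar\gamma,1,\bar N}$ with $N=\bpm 0 & c \\ 0 & 0 \epm$, applying the complex $k=1$ factorization of Theorem \ref{sec2thm1} to each factor, and recombining the diagonal pieces via this identity, a direct computation gives
\[
M \;=\; q_{\beta',\gamma,V,W}\cdot U \cdot q_{\gamma,\beta',V,W},
\]
with $U$ a real upper-unipotent loop whose only pole is the real auxiliary $\beta'$ (of order at most $2$). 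Such a $U$ decomposes as a product of real $m_{\beta',1,\cdot}$ and $m_{\beta',2,\cdot}$ factors, both of which are real $p$-products by the preceding cases, closing the induction. The main obstacle is keeping track of the intermediate commutations and cancellations carefully. A pleasant byproduct is that $r$-loops turn out not to be needed for factoring the nilpotent generators, though they remain indispensable elsewhere for non-nilpotent loops in $\lglr$.
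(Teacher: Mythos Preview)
Your argument is correct and reaches the same conclusion, but the treatment of the complex-conjugate-pair contributions is genuinely different from the paper's.

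The paper writes the real partial-fraction decomposition as a sum of the three real building blocks
\[
\bpm 1 & \tfrac{d}{\lambda-\zeta}\\ 0 & 1\epm,\qquad
\bpm 1 & \tfrac{e}{(\lambda-\xi)(\lambda-\bar\xi)}\\ 0 & 1\epm,\qquad
\bpm 1 & \tfrac{f\lambda}{(\lambda-\xi)(\lambda-\bar\xi)}\\ 0 & 1\epm,
\]
and factors each of the last two directly as a product of exactly two $q$-loops by choosing auxiliary $\gamma,\bar\gamma$ with $(\lambda-\gamma)(\lambda-\bar\gamma)=(\lambda-\xi)(\lambda-\bar\xi)+1$, respectively $(\lambda-\xi)(\lambda-\bar\xi)+\eta\lambda$. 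Your approach instead keeps the conjugate pair together as $M$ and observes that conjugating by the diagonal $q$-loop $q_{\beta',\gamma,V,W}$ (with real $\beta'$) moves all singularities of $M$ to the single real point $\beta'$: indeed
\[
q_{\gamma,\beta',V,W}\,M\,q_{\beta',\gamma,V,W}=\bpm 1 & \dfrac{c(\lambda-\bar\gamma)+\bar c(\lambda-\gamma)}{(\lambda-\beta')^2}\\[2mm] 0 & 1\epm,
\]
a real upper-unipotent loop with a pole of order at most $2$ at $\beta'$, which then splits as $m_{\beta',1,\cdot}\,m_{\beta',2,\cdot}$ and feeds back into the $k\le 2$ cases you have already established. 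Your induction therefore closes once $k=1$ and $k=2$ are done, rather than stepwise on $k$. The paper's route yields shorter explicit factorizations (two $q$'s per quadratic piece), while yours is more conceptual---``conjugate the complex poles away to a real one''---and avoids the separate $e$- and $f\lambda$-tricks. Both proofs use only $p$- and $q$-loops for this step, in agreement with your closing remark about $r$-loops.
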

\begin{proof}
The proof is similar to the proof of Theorem \ref{sec2thm1}; we show that any loop of the form $m_{\alpha,k,N}$ is a product of loops $p_{\alpha,\beta,V,W}$ and $q_{\alpha,\beta,V,W}$, by induction on $k$.

A real two-step nilpotent matrix is, in $\GL(n,\R)$, conjugate to a block diagonal matrix, with blocks only $\bpm 0\epm$ and $J:=\bpm 0 & 1 \\0 & 0 \epm$. So again, we only have to show that the loops
\begin{equation}\label{eqn:nilfpr2}
 m_{\a,k,J}(\l)=\bpm 1 & \frac{1}{(\l-\a)^k} \\0 & 1 \epm,
\end{equation}
where $\alpha\in \R$, can be factored as the product of loops of the form $p$ and $q$. We can follow the proof of Theorem \ref{sec2thm1} (taking $\beta\in \R$) until the point where, in the induction step, we have to show that the loop
\[
\bpm 1 &\frac{(\beta-\a)}{(\l-\a)[(\l-\a)^n-(\l-\beta)^n]} \\0 & 1\epm,
\]
where $\alpha\neq\beta\in \R$, can be factored as a product of loops $p$ and $q$.

We first claim that the real polynomial $(\l-\a)[(\l-\a)^n-(\l-\beta)^n]$ has only simple (complex) roots. Clearly, $\alpha$ is a simple root. If $\mu\in \C$ was a multiple root of this polynomial, then it was a zero of both $(\l-\a)^n-(\l-\beta)^n$ and its derivative $n(\l-\a)^{n-1}-n(\l-\beta)^{n-1}$. But then
\[
\left(\frac{\mu-\beta}{\mu-\alpha}\right)^n = 1 = \left(\frac{\mu-\beta}{\mu-\alpha}\right)^{n-1},
\]
which implies $\mu-\alpha = \mu-\beta$, i.e., which was only possible if $\alpha=\beta$.

Thus, partial fraction decomposition shows that the loop \eqref{eqn:nilfpr2} can be factored as the product of loops of the form
 \begin{enumerate}
 \item  $\bpm 1 & \frac{d}{\l-\zeta} \\0 & 1 \epm$, $\zeta\in \R$, $d\in\R$,
 \item  $\bpm 1 & \frac{e}{(\l-\xi)(\l-\bar\xi)} \\0 & 1 \epm$, $\xi\in\C\setminus\R$, $e\in\R$ and
 \item $\bpm 1 & \frac{f\l}{(\l-\xi)(\l-\bar\xi)} \\0 & 1 \epm$, $\xi\in\C\setminus\R$, $f\in\R$.
\end{enumerate}
Loops of the first kind were already dealt with in the base case of the induction. For the factorization of the second type of loops let $\gamma$, $\bar \gamma\in\C\setminus\R$ be the conjugate roots of $(\l-\xi)(\l-\bar\xi)+1=0$, so that
\[
(\lambda-\gamma)(\lambda-\bar\gamma) = (\lambda-\xi)(\lambda-\bar\xi) + 1.
\]
Then we can decompose
 \begin{align*}
 \bpm 1 & \frac{e}{(\l-\xi)(\l-\bar\xi)} \\0 & 1 \epm &=\bpm \frac{(\l-\gamma)(\l-\bar\gamma)}{(\l-\xi)(\l-\bar\xi)}& 0 \\0 & 1 \epm\bpm \frac{(\l-\xi)(\l-\bar\xi)}{(\l-\gamma)(\l-\bar\gamma)} & \frac{e}{(\l-\gamma)(\l-\bar\gamma)} \\0 & 1 \epm\\
 &=\bpm \frac{(\l-\gamma)(\l-\bar\gamma)}{(\l-\xi)(\l-\bar\xi)}& 0 \\0 & 1 \epm\left[\frac{(\l-\xi)(\l-\bar\xi)}{(\l-\gamma)(\l-\bar\gamma)}\bpm 1 & -e \\0 & 0 \epm+\bpm 0 & e \\0 & 1 \epm\right]
 \end{align*}
 into a product of two loops of type $q$.

For the factorization of loops of the third kind we observe first that for a small positive number $\eta>0$ the polynomial  $(\l-\xi)(\l-\bar\xi)+\eta\l=0$ has no real root. Let $\gamma$, $\bar \gamma\in\C\setminus\R$ be the roots of $(\l-\xi)(\l-\bar\xi)+\eta\l=0$, i.e.,
\[
(\lambda-\gamma)(\lambda-\bar\gamma) = (\lambda-\xi)(\lambda-\bar\xi) + \eta\lambda.
\]
Then we can decompose
 \begin{align*}
 \bpm 1 & \frac{f\l}{(\l-\xi)(\l-\bar\xi)} \\0 & 1 \epm
 &=\bpm \frac{(\l-\gamma)(\l-\bar\gamma)}{(\l-\xi)(\l-\bar\xi)}& 0 \\0 & 1 \epm\bpm \frac{(\l-\xi)(\l-\bar\xi)}{(\l-\gamma)(\l-\bar\gamma)} & \frac{f\l}{(\l-\gamma)(\l-\bar\gamma)} \\0 & 1 \epm\\
 &=\bpm \frac{(\l-\gamma)(\l-\bar\gamma)}{(\l-\xi)(\l-\bar\xi)}& 0 \\0 & 1 \epm\left[\frac{(\l-\xi)(\l-\bar\xi)}{(\l-\gamma)(\l-\bar\gamma)}\bpm 1 & -\frac{f}{\eta}\\0 & 0 \epm+\bpm 0 & \frac{f}{\eta} \\0 & 1 \epm\right]
 \end{align*}
into a product of loops of type $q$.
\end{proof}

\section{The algebraic loops of $\lupq$ with only one singularity }\label{Sec:faforNUpq}
We will first study the negative algebraic loops of $\lupq$ with only one singularity, and prove that they can also be written as the products of some nilpotent loops. \\

\indent The nilpotent loops needed for the proof are given in Table \ref{tab:upqn:pneqq}, where the loops $n_{\al,k,N}$ are needed only for the case $p\neq q$.\\


\begin{table}[h]
\caption{Nilpotent loops for $\lupq$}
\begin{tabular}{|c|c|c|}
\hline
Name & Definition & Conditions\\
\hline
$m_{\al,k,N}$ & $\Id+\left(\frac{1}{\la-\al}\right)^k N$ & \begin{tabular}{c} $\al\in\R$,  $k\in\N$, \\ $N^2=0$,
 $N^*=-N$, $\rank N=1$ \end{tabular}\\
\hline
\begin{tabular}{c} $n_{\al,k,N}$ \\ $(p \neq q)$ \end{tabular} & $\Id+\left(\frac{1}{\la-\al}\right)^k N+\frac{1}{2}\left(\frac{1}{\la-\al}\right)^{2k} N^2$ & \begin{tabular}{c} $\al\in \R$, $k\in\N$, $N=M-M^*$, where\\ $M:(V\oplus sV)^\perp\to V$,\\ $V$ max.~isotropic,\\$\rank M=1$, $\rank N=2$ \end{tabular}\\
\hline
\end{tabular}
\label{tab:upqn:pneqq}
\end{table}
\indent Recall the notation for $\lupq$ defined in Section  \ref{PTNA}. Assume $V\subset \C^n$ is a subspace isotropic with respect to $\langle \cdot,\cdot\rangle$, then $\langle \cdot,\cdot \rangle$ is definite on $V\oplus sV$ and we have an Hermitian orthogonal decomposition of $\C^n$ into three subspaces: $\C^n=V\oplus (V\oplus sV)^\perp \oplus sV$. The last row in Table \ref{tab:upqn:pneqq} needs a little explanation: If $V$ is an isotropic subspace, and $M:(V\oplus sV)^\perp\to V$ an arbitrary linear map, then we denote also by $M$ the extension of $M$ by zero on $V\oplus sV$. Then $M^*$ satisfies $V^\perp\subset \ker M^*$ and $M^*(sV)\subset (V\oplus sV)^\perp$. It follows that $N=M-M^*$ sends $sV$ to $(V\oplus sV)^\perp$, which in turn is sent to $V$. In particular, $N^2$ sends $sV$ to $V$ and $N^3=0$.

To show that the nilpotent loops in Table \ref{tab:upqn:pneqq} satisfy the reality condition observe that they fit into the following framework:
\begin{lem} \label{lem:frame}
Let $N^*=-N$ and $N^r=0$ for some $r\ge 1$. Then for any $\al\in \R$ and $k\in \N$,
\[
g(\la)=\exp((\la-\al)^{-k} N)=\sum_{j=0}^{r-1} \frac{1}{j!} (\la-\al)^{-jk}N^{j}
\]
is a rational loop satisfying the $\Un_{p,q}$-reality condition.
\end{lem}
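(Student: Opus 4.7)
My plan is to verify the two claims separately: that $g$ is a rational loop in $\lglc$, and that it satisfies the $\Un_{p,q}$-reality condition $\tau(g(\bar\lambda))=g(\lambda)$.

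For the first claim, note that since $N^r=0$, the formal power series $\exp((\lambda-\alpha)^{-k}N)=\sum_{j\ge 0}\frac{1}{j!}(\lambda-\alpha)^{-jk}N^j$ truncates at $j=r-1$, so $g(\lambda)$ is automatically a matrix of rational functions of $\lambda$, with the only possible pole at $\lambda=\alpha$. Because $\alpha\neq\infty$, all terms with $j\ge 1$ vanish at infinity, giving $g(\infty)=\Id$. Finally $N$ commutes with itself, so the usual formal manipulation yields
\[
g(\lambda)\,\exp(-(\lambda-\alpha)^{-k}N)=\exp(0)=\Id,
\]
and thus $g(\lambda)^{-1}=\exp(-(\lambda-\alpha)^{-k}N)$ is again rational. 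Hence $g\in\lglc$.

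For the reality condition I would compute $g(\bar\lambda)^*$ term by term. Since $\alpha\in\R$ we have $(\bar\lambda-\alpha)=\overline{\lambda-\alpha}$, and because $(cA)^*=\bar c\,A^*$ for scalars, each term satisfies
\[
\bigl((\bar\lambda-\alpha)^{-jk}N^j\bigr)^{*}=(\lambda-\alpha)^{-jk}(N^j)^{*}.
\]
The hypothesis $N^*=-N$ gives $(N^j)^*=(N^*)^j=(-1)^j N^j$, so
\[
g(\bar\lambda)^{*}=\sum_{j=0}^{r-1}\frac{(-1)^j}{j!}(\lambda-\alpha)^{-jk}N^j=\exp\bigl(-(\lambda-\alpha)^{-k}N\bigr)=g(\lambda)^{-1}.
\]
Therefore $\tau(g(\bar\lambda))=(g(\bar\lambda)^{*})^{-1}=g(\lambda)$, as required.

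There is no real obstacle here: the lemma is essentially a packaging of the two facts that (i) $N^*=-N$ turns the adjoint of each monomial $(\lambda-\alpha)^{-jk}N^j$ into the same monomial with sign $(-1)^j$, and (ii) $\alpha\in\R$ ensures that replacing $\lambda$ by $\bar\lambda$ combines cleanly with complex conjugation of the scalar coefficient. The truncation of the exponential from $N^r=0$ and the identity $\exp(A)^{-1}=\exp(-A)$ (valid since $A$ commutes with itself) then assemble these observations into the reality condition.
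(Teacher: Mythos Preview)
Your proof is correct and follows essentially the same approach as the paper: the paper compresses your term-by-term computation into the single identity $\tau(g(\bar\lambda))=\exp(d\tau((\bar\lambda-\alpha)^{-k}N))=\exp(-(\lambda-\alpha)^{-k}N^*)=g(\lambda)$, using that $\tau\circ\exp=\exp\circ d\tau$ with $d\tau(X)=-X^*$. Your version is simply the expanded form of this, and your explicit verification that $g(\infty)=\Id$ and that $g(\lambda)^{-1}$ is rational is more than the paper spells out.
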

\begin{proof} Rationality is clear because of the nilpotency of $N$. For the reality condition we calculate
\[
\tau(g(\bar{\la}))=\exp(d\tau((\bar{\la}-\al)^{-k}N))=\exp(-(\la-\al)^{-k}N^*)=g(\la),
\]
where we used $d\tau(X)=-X^*$ for all $X\in \gl(n,\C)$.

\end{proof}
\begin{rem}
By the above lemma, it is easy to see $m_{\al,k,N}$ and $n_{\al,k,N}$ in Table \ref{tab:upqn:pneqq} satisfy the $\Un_{p,q}$-reality condition for any rank of $N$.
\end{rem}
The following lemmas about the existence of simple factors of type $m_{\al,k,N}$ and $n_{\al,k,N}$ will be crucial for the proof.
\begin{lem}\label{lem:Nexist} Let $N^*=-N$ and $N^2=0$. Then $\im N$ is isotropic, and $\ker N=(\im N)^\perp$. Conversely, for any $V\subset \C^n$ isotropic, and vectors $v\in V$ and $w\notin V^\perp$ such that $\langle v,w\rangle\in \ii\cdot \R$, there exists an anti-self-adjoint at most rank 2 endomorphism $N$ of $\C^n$ with $N^2=0$ satisfying $N(V^\perp)=0$, $\im N\subset V$ and $N(w)=v$.
\end{lem}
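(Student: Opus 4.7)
My plan is to handle the two implications separately, with the forward direction being an easy unpacking of the hypotheses and the converse reducing to writing down a suitable ansatz.

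For the forward direction, both claims follow from short manipulations. Given $Nx,Ny\in\im N$, I would write
\[
\langle Nx,Ny\rangle=\langle x,N^*Ny\rangle=-\langle x,N^2y\rangle=0
\]
to conclude that $\im N$ is isotropic. For the kernel identity, nondegeneracy converts $Nx=0$ into $\langle Nx,y\rangle=0$ for all $y$, and then $N^*=-N$ turns this into $\langle x,Ny\rangle=0$ for all $y$, i.e.\ $x\in(\im N)^\perp$.

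For the converse, I would first observe that the constraints $\im N\subset V$ and $N(V^\perp)=0$ force $N$ to have the shape $x\mapsto\sum_i\langle x,a_i\rangle b_i$ with $a_i,b_i\in V$, because $V^\perp=\{x:\langle x,a\rangle=0\text{ for all }a\in V\}$. The construction then splits into two cases depending on whether $\langle v,w\rangle$ vanishes (the case $v=0$ being handled by $N=0$).

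When $\langle v,w\rangle\neq 0$ I would try the rank-one ansatz $N(x)=c\langle x,v\rangle v$. Since an isotropic subspace of a Hermitian form is automatically totally isotropic, $\langle v,v\rangle=0$ makes $N^2=0$ immediate. A direct adjoint computation reduces $N^*=-N$ to the scalar condition $c\in\ii\R$, and $N(w)=v$ fixes $c=1/\langle w,v\rangle$, which lies in $\ii\R$ by the hypothesis $\langle v,w\rangle\in\ii\R$.

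When $\langle v,w\rangle=0$ the rank-one ansatz manifestly fails, so I would pick $a\in V$ with $\langle w,a\rangle=1$ (possible since $w\notin V^\perp$) and try
\[
N(x)=\langle x,a\rangle v-\langle x,v\rangle a.
\]
Total isotropy of $V$ again gives $N^2=0$, and $N(w)=v$ is immediate by the choice of $a$. The nontrivial verification is $N^*=-N$: a short computation using conjugate linearity in the second slot of $\langle\cdot,\cdot\rangle$ shows $N^*y=\langle y,v\rangle a-\langle y,a\rangle v$, which is exactly $-Ny$. The main obstacle is isolating the correct antisymmetric ansatz in $(a,v)$ in this second case; once that form is in hand, the anti-self-adjointness falls out of the pairing computation, but it is easy to write down similar-looking rank-two expressions that fail $N^*=-N$.
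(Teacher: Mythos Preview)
Your argument is essentially correct and takes a genuinely different route from the paper's. One caveat first: the paper's form $\langle v,w\rangle=-\sum_{i\le p}\bar v_iw_i+\sum_{i>p}\bar v_iw_i$ is conjugate linear in the \emph{first} slot, not the second as you assume, so as written your maps $x\mapsto c\langle x,v\rangle v$ and $x\mapsto\langle x,a\rangle v-\langle x,v\rangle a$ are conjugate linear rather than $\C$-linear. Swapping the arguments (i.e., using $\langle v,x\rangle$ and $\langle a,x\rangle$ throughout) fixes this with no change to your verifications.

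With that adjustment, your converse differs from the paper's. The paper first reduces the indefinite problem on $\C^n$ to a positive-definite one on $V$: writing $w=s\tilde w_0+\tilde w$ with $\tilde w_0\in V$ and $\tilde w\in V^\perp$, it observes that any admissible $N$ is determined by $N|_{sV}=\tilde N\circ s$ for some $\tilde N:V\to V$ that is skew with respect to the \emph{standard} inner product $(\cdot,\cdot)=\langle s\cdot,\cdot\rangle$, and then produces $\tilde N$ of rank at most $2$ by an explicit matrix after unitarily rotating $\tilde w_0$ to the first basis vector. You skip this reduction entirely and write down a rank-one formula (when $\langle v,w\rangle\neq 0$) or an antisymmetric rank-two formula (when $\langle v,w\rangle=0$) directly in terms of the indefinite pairing. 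The paper's route is more structural and makes the duality $sV\leftrightarrow V$ explicit; yours is more elementary and reaches the answer faster, at the cost of a case split that the paper's unified construction avoids.
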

\begin{proof}
Let $N:\C^n\to \C^n$ be a two-step nilpotent anti-self-adjoint endomorphism. Since
\[
0=\langle N^2v,w\rangle =-\langle Nv,Nw\rangle,
\]
for all $v$ and $w$, the image of $N$ is isotropic. Furthermore $\ker N=(\im N^*)^\perp=(\im N)^\perp$.
 If we set $V=\im N$, then the map $N$ is defined by its restriction $N:sV\to V$, and the map $N\circ s:V\to V$ is anti-self-adjoint with respect to the standard inner product $(v,w)=\langle sv,w\rangle$:
\[
(Ns(v),w)=\langle sNsv,w\rangle=-\langle sv,Ns(w)\rangle =-(v,Ns(w)).
\]

So for the converse direction, if $v\in V$ and $w\notin V^\perp$ with $\langle v,w\rangle\in \ii\cdot \R$ are given, write $w=s\tilde{w}_0 + \tilde{w}$ with $\tilde w_0\in V$ and $\tilde{w}\in V^\perp$ and note that $(\tilde{w}_0,v)=\langle w,v\rangle \in \ii\cdot \R$. We can therefore choose a map $\tilde{N}:V\to V$, anti-self-adjoint with respect to $(\cdot,\cdot)$, that satisfies $\tilde{N}(\tilde{w}_0)=v$. Then $N:\C^n\to \C^n$, defined by $N(V^\perp)=0$ and $\left.N\right|_{sV}=\tilde{N}s$, is anti-self-adjoint with respect to $\langle \cdot,\cdot\rangle$ and $N(w)=N(s\tilde{w}_0)=\tilde{N}(\tilde{w}_0)=v$.\\
\indent Next we will explain that $N$ can be chosen to have rank at most $2$. For that, it suffices to choose $\tilde N$ with rank at most $2$. Let $m=\dim V$, and choose $U\in \Un_m$ such that $U(\tilde w_0)=\bpm 1&0&\ldots&0\epm^t$, $U(v)=\bpm z_1&z_2&\ldots&z_m\epm^t$. Since $(\tilde{w}_0,v)\in \ii\cdot \R$, then $z_1\in \ii\cdot \R$. Writing $\hat N=\bpm z_1& -\bar Z_2^t\\ Z_2& 0\epm$, where $Z_2=\bpm z_2&z_3&\ldots&z_m\epm^t$, we can define $\tilde N=U^*\hat N U$, which has rank at most 2.

\end{proof}

\begin{lem} \label{lem:N2exist} Assume that $p\neq q$ and let $V$ be maximal isotropic. If $u,v,w$ are vectors such that $w\in V$, $v\in V^\perp\setminus V$ and $u\notin V^\perp$, satisfying $\langle u,w\rangle\in \R\setminus \{0\}$ and
\[
2\cdot \langle u,w\rangle + \langle v,v\rangle=0,
\]
then there exists a rank 1 linear map $M:(V\oplus sV)^\perp \to V$ such that the anti-self-adjoint rank 2 endomorphism $N=M-M^*$ satisfies
\[
\frac{1}{2}N^2(u)+N(v)+w=0.
\]
\end{lem}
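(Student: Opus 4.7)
The plan is to parametrize a general rank-one map $M:(V\oplus sV)^\perp \to V$ as $M(x)=\langle y,x\rangle w_0$ for some $y\in(V\oplus sV)^\perp$ and $w_0\in V$, which is possible because $\langle\cdot,\cdot\rangle$ is nondegenerate on $(V\oplus sV)^\perp$ and therefore represents every linear functional there. A direct adjointness calculation then identifies $M^*(z)=\langle w_0,z\rangle y$ for $z\in sV$, with $M^*$ vanishing on $V^\perp$.

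With $N=M-M^*$, the next step is to unpack the three terms of the target equation using the decomposition $\C^n = V\oplus(V\oplus sV)^\perp\oplus sV$. Write $v=v_V+v_1$ and $u=u_V+u_2+u_{sV}$. Since $v\in V^\perp$ and $V\perp(V\oplus sV)^\perp$, the annihilation of crossed pairings yields $N(v)=\langle y,v\rangle w_0$. Similarly $N(u)=\langle y,u\rangle w_0-\langle w_0,u\rangle y$. Applying $N$ a second time uses $N(w_0)=0$ and $N(y)=M(y)=\langle y,y\rangle w_0$ (since $M^*$ is zero on $V^\perp$) to give $N^2(u)=-\langle w_0,u\rangle\langle y,y\rangle w_0$. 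Plugging these expressions into the target equation produces $(-\tfrac12\langle w_0,u\rangle\langle y,y\rangle+\langle y,v\rangle)w_0+w=0$, which immediately forces $w_0\in\C w$, so without loss of generality I set $w_0=w$.

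The problem then collapses to the scalar equation $-\tfrac12\langle w,u\rangle\langle y,y\rangle+\langle y,v\rangle+1=0$ for $y\in(V\oplus sV)^\perp$. Trying the ansatz $y=cv_1$ with $c\in\R$, the orthogonality relations and isotropy of $V$ give $\langle v,v\rangle=\langle v_1,v_1\rangle$, $\langle y,v\rangle=c\langle v,v\rangle$, and $\langle y,y\rangle=c^2\langle v,v\rangle$. Writing $a:=\langle u,w\rangle\in\R\setminus\{0\}$ and using the hypothesis $\langle v,v\rangle=-2a$ together with $\langle w,u\rangle=\overline{a}=a$, the scalar equation collapses to $a^2c^2-2ac+1=(ac-1)^2=0$, so $c=1/\langle u,w\rangle$ solves it. Since $v\notin V$ implies $v_1\neq 0$ and $w\neq 0$, this $M$ genuinely has rank one, and $N=M-M^*$ has rank two (its image is $\mathrm{span}(w,y)$, two independent vectors sitting in the orthogonal summands $V$ and $(V\oplus sV)^\perp$).

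The step I expect to be the only conceptual hurdle is recognizing that $w_0$ is forced to be a scalar multiple of $w$; once that is seen, the ansatz $y\in\R v_1$ is the natural choice, the hypothesis $2\langle u,w\rangle+\langle v,v\rangle=0$ is exactly what makes the quadratic in $c$ a perfect square, and the rest is bookkeeping.
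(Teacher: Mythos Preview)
Your proof is correct and in fact constructs exactly the same map $M$ as the paper, but arrives at it by a different route. The paper simply \emph{defines} $M$ by $M(v)=-2w$ and $M=0$ on $V\oplus sV\oplus\bigl(v^\perp\cap(V\oplus sV)^\perp\bigr)$, then verifies the target identity via the adjointness relation $\langle N(u),v\rangle=-\langle u,N(v)\rangle=2\langle u,w\rangle$, which places $N(u)$ in the coset $-v+V+\bigl(v^\perp\cap(V\oplus sV)^\perp\bigr)$; one more application of $N$ gives $N^2(u)=2w$ immediately. You instead parametrize all rank-one $M$'s as $M(x)=\langle y,x\rangle w_0$ and solve for the parameters; the hypothesis $2\langle u,w\rangle+\langle v,v\rangle=0$ enters for you as the condition making the quadratic $(ac-1)^2=0$ a perfect square, whereas in the paper it enters as the coset identification $\tfrac{2\langle u,w\rangle}{\langle v,v\rangle}=-1$. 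Your derivation is more constructive and explains \emph{how} $M$ is found rather than merely verified, at the cost of slightly more bookkeeping; the paper's argument is shorter but requires guessing the right $M$ up front.
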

\begin{proof}
Let $M:\C^n\to \C^n$ be defined by $M=0$ on $V\oplus sV\oplus (v^\perp\cap (V\oplus sV)^\perp)$ and $M(v)=-2w$, and define $N:=M-M^*$. We have
\[
\langle N(u),v\rangle =-\langle u,N(v)\rangle=2\cdot \langle u,w\rangle
\]
and consequently,
\[
N(u)\in 2 \frac{\langle u,w\rangle}{\langle v,v\rangle}\cdot v + v^\perp\cap (V\oplus sV)^\perp+V=-v+v^\perp\cap (V\oplus sV)^\perp+V.
\]
It follows
\[
\frac{1}{2}N^2(u)+N(v)+w=w-2w+w=0
\]
as desired.
Next we consider the ranks of $M$ and $N$. Assume $v=v_0+v_1$ for $v_0\in V$, $0\neq v_1\in (V\oplus sV)^\perp$. Then we have
\[
V\oplus sV\oplus (v^\perp\cap (V\oplus sV)^\perp)=V\oplus sV\oplus (v_1^\perp\cap (V\oplus sV)^\perp)=v_1^\perp.
\]
It is easy to see $V\oplus sV\oplus (v^\perp\cap (V\oplus sV)^\perp)$ has dimension $n-1$, i.e. we have $\rank M=1$ and $\rank N=2$.
\end{proof}

Now we can discuss the factorizations of the negative algebraic loops with one singularity into nilpotent loops:
\begin{thm}\label{Upq:facnalstonil}
Assume $g(\l)$ is a negative algebraic loop of $\lupq$ with only singularity at $\a$, $\a\in\R$. Then when $p=q$ (resp.\ $p\neq q$), we have that $g(\l)$ can be written as the product of the nilpotent loops $m_{\al,k,N}$ (resp.\ $m_{\al,k,N}$, $n_{\al,k,N}$) as in Table \ref{tab:upqn:pneqq}.
\end{thm}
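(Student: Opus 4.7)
The plan is induction on the pole data $(r, n_r)$ of $g$ at $\alpha$, ordered lexicographically, where $r$ is the pole order and $n_r = \rank A_r$ for the leading coefficient in $g(\lambda) = \Id + (\lambda-\alpha)^{-1} A_1 + \cdots + (\lambda-\alpha)^{-r} A_r$. The base case is trivial. For the inductive step I will construct a nilpotent loop $h$ of type $m_{\alpha, r, N}$, or in the $p \neq q$ case possibly $n_{\alpha, r, N}$, such that $h^{-1} g$ lies in $\lupq$ with strictly smaller pole data at $\alpha$; iterating this and applying the induction hypothesis will yield the desired factorization. Preservation of the reality condition at each step is automatic since $h \in \lupq$ by Lemma \ref{lem:frame}.

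The structural starting point is to extract the constraints that the $\Un_{p,q}$-reality condition $g(\lambda) g(\bar\lambda)^* = \Id = g(\bar\lambda)^* g(\lambda)$ places on the coefficients. Expanding in Laurent series at $\alpha \in \R$, the $(\lambda-\alpha)^{-2r}$ coefficients give $A_r A_r^* = 0$ and $A_r^* A_r = 0$, so that both $\im A_r$ and $\im A_r^*$ are isotropic for $\langle \cdot, \cdot \rangle$. This is precisely what feeds into Lemmas \ref{lem:Nexist} and \ref{lem:N2exist}: one selects an isotropic $V$ containing $\im A_r$, picks $v \in \im A_r$ together with a test vector $w \notin V^\perp$ satisfying the appropriate pairing, and extracts the reducing $N$, respectively $N = M - M^*$.

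When $p = q$, any maximal isotropic $V$ satisfies $(V \oplus sV)^\perp = 0$, so the pairing condition $\langle v, w \rangle \in \ii \cdot \R$ of Lemma \ref{lem:Nexist} can always be arranged inside the isotropic data. The resulting $m_{\alpha, r, N}$ strips off a rank-one piece of $A_r$, and the kernel condition $\ker N \supset V^\perp$ together with the chain of reality relations among the lower coefficients ensures that $m_{\alpha, r, N}^{-1} g$ picks up no new poles of order greater than $r$. When $p \neq q$, I first attempt the same construction; if the pairing condition of Lemma \ref{lem:Nexist} is obstructed---essentially when the required imaginary bracket can only be realized using a witness in the definite middle summand $(V \oplus sV)^\perp$---I instead invoke Lemma \ref{lem:N2exist} to build a rank-two three-step nilpotent, and the identity $\tfrac12 N^2(u) + N(v) + w = 0$ built into that lemma is exactly what is needed so that $n_{\alpha, r, N}^{-1} g$ lowers the pole data.

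The main obstacle I anticipate is the algebraic bookkeeping in the inductive step: after choosing $N$, one must verify that no new poles of order greater than $r$ appear in $h^{-1} g$, which requires using the full chain of Laurent relations produced by the reality condition together with the kernel constraints on $N$ and, for the $n$-type factor, the three-term cancellation of Lemma \ref{lem:N2exist}. The subtlest point is to show that the $m$-type and $n$-type reductions together exhaust all configurations that can arise when $p \neq q$, which is precisely where the indefinite signature and the nontriviality of $(V \oplus sV)^\perp$ force the introduction of the new three-step loops.
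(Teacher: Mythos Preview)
Your outline has a genuine gap: the choice of exponent $r$ in the reducing factor $m_{\alpha,r,N}$ (or $n_{\alpha,r,N}$) is not correct in general, and the induction on the pole data $(r,\rank A_r)$ is too coarse to support the argument.

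Concretely, write $m_{\alpha,r,N}^{-1}g = (\Id - (\lambda-\alpha)^{-r}N)\bigl(\Id + \sum_{i=1}^r (\lambda-\alpha)^{-i}A_i\bigr)$. This produces terms $-(\lambda-\alpha)^{-r-i}NA_i$ for $i=1,\dots,r$, so to avoid raising the pole order you need $NA_i=0$ for every $i\ge 1$, i.e.\ $\im A_i\subset \ker N = V^\perp$ for all $i$. The reality condition at top degree forces $\im A_r$ to be isotropic, but it does \emph{not} force the lower images $\im A_1,\dots,\im A_{r-1}$ into $V^\perp$. Already for $r=2$ one has only $A_1^* = -A_1$, and there is no reason $\im A_1$ is perpendicular to the isotropic line you chose inside $\im A_2$. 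So the ``chain of reality relations'' you invoke does not deliver the required vanishing; multiplying by $m_{\alpha,r,N}^{-1}$ will typically create poles of order up to $2r$.

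The paper's proof avoids this by two changes you are missing. First, it inducts not on $(r,\rank A_r)$ but on the full tuple $\eps(g)=(\dim K_{i+1}-\dim K_i)_i$ built from the kernel filtration $K_i=\bigcap_{j\ge i}\ker A_j$, ordered lexicographically. Second, the exponent of the reducing factor is \emph{not} $r$: it is $l-k$ (in the $m$-case) or $\tfrac{s+l}{2}$ (in the $n$-case), where $k,l,s$ are carefully chosen indices recording exactly when the images $A_j(K_{j'})$ first leave $V$ or $V^\perp$. With those exponents the cross-terms that would raise the pole order land in $N(V^\perp)=0$ by construction, and the gain is detected as a strict drop in $\eps(g)$, not necessarily in $(r,\rank A_r)$. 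Your plan as written does not have a substitute for this mechanism.
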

\begin{proof}
We write
\[
g(\la)=(\la-\al)^{-r}A_r+\ldots+(\la-\al)^{-1}A_1+A_0
\]
with $r\ge 1$, $A_r\neq 0$ and $A_0=\Id$. The reality condition written out explicitly is
\begin{equation}\label{eq:orthcond}
\sum_{i+j=k} \langle A_iv,A_jw\rangle=0
\end{equation}
for all $k\ge 1$.

The type of induction we will use is the same as in the second part of the proof of \cite[Theorem 3.1]{Goe13}. We use the same notation as there:
 $K_i=\bigcap_{j\ge i} \ker A_j$ for $i\ge 0$, so that
\[
\C^n=K_{r+1}\supset K_r\supset\ldots \supset K_1\supset K_0=0,
\] and $V_i:=\sum_{j\ge i} A_j(K_{j+1})$, so that
\[
0=V_{r+1}\subset V_r\subset\ldots\subset V_1\subset V_0.
\]
For $i\ge 1$, the spaces $V_i$ are isotropic, and $V_0$ is perpendicular to $V_1$, so since $g$ is supposed to be nonconstant, $V_0\neq \C^n$. Thus, no analogue of the last part of the proof of \cite[Theorem 3.1]{Goe13} is needed here.

Let ${\mathcal K}=\{ (a_i)_{i\ge 0}\mid a_i\in \N,\, \sum_i a_i=n\}$, equipped with the total ordering
\[
(a_i)_i< (b_i)_i\Longleftrightarrow \text{There exists } j\ge 0 \text{ such that } a_i=b_i \text{ for } i>j \text{ and } a_j < b_j.
\]
For a loop $g$ as above, we define an associated tuple $\eps(g)=(a_i)_i\in {\mathcal K}$ by $a_i:=\dim K_{i+1}-\dim K_i=\dim A_i(K_{i+1})$.

We will use Lemma \ref{lem:Nexist}, Lemma \ref{lem:N2exist} to construct nilpotent $m_{\a,k, N}$, $k\in\N$, $\rank N\leq 2$, $n_{\a,k, N}$, $k\in\N$, $\rank N=2$ to reduce the total ordering: $\eps(g)$. Since the remainder of the proof is significantly different in the case $p=q$, we treat it in the following two separate Propositions \ref{prop:p=q} and  \ref{prop:pneqq}. In addition, we will prove Proposition \ref{Prop:fN2tN1} to decompose $m_{\a,k, N}$, $k\in\N$, $\rank N=2$ into the product of two $m_{\a,k, N}$, $k\in\N$, $\rank N=1$. This will conclude the proof.
\end{proof}
\begin{prop} \label{prop:p=q} With the notation as above, in the case $p=q$ the loop $g$ is a product of simple factors of the form $m_{\al,k,N}$, $k\in\N$, $\rank N\leq 2$.
\end{prop}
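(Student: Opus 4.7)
The plan is to induct on $\epsilon(g)\in\mathcal{K}$ under the prescribed total ordering, paralleling the second part of the proof of \cite[Theorem~3.1]{Goe13} but forcing every nilpotent matrix constructed at the induction step to be anti-self-adjoint, so that the resulting simple factors lie in $\lupp$. The base case is $\epsilon(g)=(n,0,0,\ldots)$, for which $K_1=\C^n$, every $A_j$ with $j\ge 1$ vanishes, and $g=\Id$ is the empty product.

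For the inductive step, assume $g$ is nonconstant and let $r\ge 1$ denote the pole order of $g$ at $\alpha$, so that $A_r\ne 0$. I would apply Lemma~\ref{lem:Nexist} to the isotropic subspace $V_1$ to produce an anti-self-adjoint $N$ of rank at most $2$ with $N^2=0$, $\Im\, N\subset V_1$, and $\ker N\supset V_1^\perp$; the construction is driven by a pair $(v,w)$ with $v\in V_1$, $w\notin V_1^\perp$, and $\langle v,w\rangle\in\ii\cdot\R$, yielding $N(w)=v$. With $N$ in hand and an appropriate power $k\in\{1,\ldots,r\}$ chosen so that $v$ sits in the top piece of the filtration used to define $\epsilon$, the simple factor $m_{\alpha,k,N}$ has the property that $\tilde g:=m_{\alpha,k,N}^{-1}\cdot g$ satisfies $\epsilon(\tilde g)<\epsilon(g)$; the induction hypothesis then expresses $\tilde g$ as the required product, and hence so does $g$.

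The key technical step, and the place where the hypothesis $p=q$ is used, is locating the pair $(v,w)$. That $w\notin V_1^\perp$ can be arranged is clear because $V_1\subsetneq\C^n$ (since $g$ nonconstant forces $V_0\ne\C^n$ and $V_1\subset V_0$). The requirement $\langle v,w\rangle\in\ii\cdot\R$ is more delicate: for a natural choice $v=A_r x$ and $w=x$, the reality condition \eqref{eq:orthcond} with $k=r$ gives
\[
2\,\Re\langle v,w\rangle = -\sum_{\substack{i+j=r\\ i,j\ge 1}}\langle A_i x,\,A_j x\rangle,
\]
and in the balanced-signature case $p=q$ this real number can be adjusted to vanish by modifying $x$ within the available isotropic directions that the top Jordan block of $g$ at $\alpha$ provides. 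This freedom disappears when $p\ne q$ and is exactly the obstruction that forces the introduction of the three-step nilpotent loops $n_{\alpha,k,N}$ in Proposition~\ref{prop:pneqq}.

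The main obstacle is then showing that $\tilde g$ does not acquire new higher-order poles at $\alpha$ and that its $\epsilon$-tuple is genuinely smaller. For pole control one checks $N A_j=0$ for all relevant $j\ge 1$, which follows from $V_0\perp V_1$ (giving $A_j(K_{j+1})\subset V_0\subset V_1^\perp\subset \ker N$) by an induction on the filtration indices exactly as in \cite[Thm.~3.1]{Goe13}; combined with the fact that $\tilde A_r$ arises from $A_r$ by subtracting $N(\cdot)$, which removes $v$ from its image, the rank, and hence $a_r$, strictly decreases. Together these complete the inductive step and, by induction, the proposition.
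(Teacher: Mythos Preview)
Your outline has the right global shape (induction on $\epsilon(g)$, Lemma~\ref{lem:Nexist} to produce the anti-self-adjoint $N$), but the inductive step as written does not go through, and your account of where $p=q$ enters is not the correct mechanism.

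First, the paper does \emph{not} make $\langle v,w\rangle\in\ii\R$ by ``adjusting $x$ within isotropic directions''. It fixes a \emph{maximal} isotropic $V\supset V_1$ and splits into two cases according to the smallest $k$ with $\im A_k\subset V$. In the case $k>1$ the relevant cross terms in \eqref{eq:orthcond} vanish automatically because all $\im A_j$ with $j\ge k$ lie in the isotropic $V$; the hypothesis $p=q$ is used precisely to conclude $A_{k-1}v\notin V^\perp$ from $A_{k-1}v\notin V$, since a maximal isotropic $V$ satisfies $V=V^\perp$ only when $p=q$. Your formula $2\Re\langle A_r x,x\rangle=-\sum_{i+j=r,\,i,j\ge1}\langle A_i x,A_j x\rangle$ is correct, but nothing in your argument forces the right-hand side to vanish, and there is no ``adjustment'' available in general; this is a genuine gap, not a matter of writing out details. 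In the case $k=1$ the paper shows all $A_i$ are anti-self-adjoint (so $\langle A_r x,x\rangle\in\ii\R$ for free) and chooses $x$ with $\langle A_r x,x\rangle\ne0$; your sketch does not separate these regimes, and the single recipe $v=A_r x$, $w=x$ with a vague choice of $k$ cannot handle both.

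Second, your pole control is incorrect. You claim $NA_j=0$ for $j\ge1$ from $A_j(K_{j+1})\subset V_0\subset V_1^\perp\subset\ker N$, but $A_j(K_{j+1})$ is only a subspace of $\im A_j$; in general $\im A_j\not\subset V_1^\perp$, so $NA_j\ne0$ and $\tilde g$ may acquire higher-order terms. The paper avoids this by arguing on the kernels: in the $k>1$ case one shows $K_i=\tilde K_i$ for $i\ge l$ and $K_{l-1}\subsetneq\tilde K_{l-1}$, not that $NA_j=0$ globally. In the $k=1$ case one does get $NA_j=0$, but only because $\im A_j\subset V\subset L^\perp=\ker N$, which uses the hypothesis $\im A_j\subset V$ for all $j\ge1$ that defines that case. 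You should rework the inductive step along these two cases.
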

\begin{proof}
We will prove the claim by induction on $\eps(g)$, the induction basis being trivial since the unique minimum is attained only for $g(\la)=\Id$.

Let $k\leq r$ be the smallest integer such that $\im A_k\subset V$, where $V$ is any maximal isotropic subspace containing $V_1$. Since $A_0=\Id$ and as noted above, $V_0\neq \C^n$ unless $g$ is the constant identity loop, we can assume $k\ge 1$.

We consider first the case that $k>1$. Then by definition of $V_{k-1}$, we have $A_{k-1}(K_k)\subset V_{k-1}\subset V$. Let $l> k$ be the smallest integer such that $A_{k-1}(K_l)\not\subset V$. Choose a vector $v\in K_l$ such that $A_{k-1}(v)\notin V$; then $v\notin K_{l-1}$ by definition of $l$. Since $l>k$, we  have $0\neq A_{l-1}(v)\in V_{l-1}\subset V_{k-1}\subset V$. Equation \eqref{eq:orthcond} gives
\begin{align}\label{eq:foro11}
0&=\sum_{i+j=k+l-2} \langle A_i v,A_j v\rangle\\
&= \langle A_{k-1}v,A_{l-1} v\rangle + \langle A_{l-1} v,A_{k-1}v\rangle=2\cdot \Re\,\langle A_{k-1}v,A_{l-1} v\rangle,\notag
\end{align}
since all the other summands vanish either because $v\in K_{l}$, or because $V$ is isotropic and for all indices $i,j\ge k$, the images $\im A_i$ and $\im A_j$ are in $V$.

We have $A_{l-1}v \in V$ (as argued above), as well as $A_{k-1}v \notin V=V^\perp$ (here we use $p=q$) and $\langle A_{k-1}v,A_{l-1}v\rangle \in \ii\R$. With the help of Lemma \ref{lem:Nexist} we may thus choose a two-step nilpotent, anti-self-adjoint  map $N$ of rank at most $2$ with $N(V)=0$ and $N(A_{k-1}v)=-A_{l-1} v$. We define
\begin{align*}
\tilde{g}(\la):&=m_{\al,l-k,N}(\la)g(\la)\\
&=\Id+\ldots + (\la-\al)^{-l+1} (NA_{k-1}+A_{l-1}) + (\la-\al)^{-l} A_{l} +\ldots + (\la-\al)^{-r} A_r\\
&=:\sum_i (\la-\al)^{-i}\tilde{A}_i,
\end{align*} and wish to show that $\eps(\tilde{g})<\eps(g)$ in order to use induction. To show this we have to investigate how $\tilde{K}_i=\bigcap_{j\ge i} \ker \tilde{A}_j$ has changed compared to $K_i$. Obviously, $K_i=\tilde{K}_i$ for $i\ge l$. If $w\in K_{l-1}$, we have
\[
(NA_{k-1}+A_{l-1})(w)=NA_{k-1}(w)\in NA_{k-1}(K_{l-1})\subset N(V)=0,
\]
so $K_{l-1}\subset \tilde{K}_{l-1}$, and furthermore, $(NA_{k-1}+A_{l-1})(v)=0$, so $\K_{l-1}\subsetneq\tilde{K}_{l-1}$, which means $\eps(\tilde{g})<\eps(g)$. This concludes the case $k>1$.

Let us assume now that $k=1$, i.e., that $\im A_1,\ldots,\im A_r\subset V$. Then \eqref{eq:orthcond} gives that
\begin{equation}\label{eq:orthcond2}
\langle A_iv,w\rangle + \langle v,A_iw\rangle = 0
\end{equation}
for all $i$ and all $v,w$, so that all matrices $A_i$ are skew-Hermitian with respect to the inner product $\langle \cdot,\cdot\rangle$. This implies that we can find a vector $v$ such that $\langle A_rv,v\rangle \neq 0$. (Indeed, $sA_r$ is skew-Hermitian with respect to the standard inner product, hence $isA_r$ is a nonzero Hermitian matrix, which necessarily has a nonzero eigenvalue.)

Let $L$ be the complex line spanned by $A_rv$. As $\langle A_rv,v\rangle$ is a nonzero element of $\ii\R$, we can, using Lemma \ref{lem:Nexist}, find a two-step nilpotent map $N$ with $N(L^\perp)=0$, $\im N \subset L$ and $N(v) = -A_rv$. Note that $\im A_i\subset V\subset L^\perp$ for all $i\geq 1$, so that $NA_i=0$ for such $i$. We define
\begin{align*}
\tilde g(\lambda):&= m_{\alpha,r,N} g(\lambda)= g(\lambda) + (\lambda-\alpha)^{-r} N \\
&= (\lambda-\alpha)^{-r} (A_r+N) + (\lambda-\alpha)^{-r+1} A_{r-1} +  \ldots + (\lambda-\alpha)^{-1}A_1 + A_0.
\end{align*}
For all $w\in \ker A_r$ we have, by \eqref{eq:orthcond2}, that $w\in (\im A_r)^\perp \subset L^\perp$, hence $Nw=0$. Moreover, $(A_r+N)(v) = 0$ by definition of $N$, so $\dim \ker (A_r+N)>\dim \ker A_r$, i.e. $\eps(\tilde{g})<\eps(g)$. This concludes the case $k=1$.\end{proof}

\begin{prop}\label{prop:pneqq} With the same notation as above, in the case $p\neq q$ the loop $g$ is a product of simple factors of the form $m_{\al,k,N}$, $k\in\N$, $\rank N\leq 2$ and $n_{\al,k,N}$, $k\in\N$, $\rank N=2$.
\end{prop}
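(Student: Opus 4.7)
The plan is to mirror the induction of Proposition \ref{prop:p=q} step by step, using the same invariant $\eps(g)$, the same choice of a maximal isotropic $V\supset V_1$, and the same smallest index $k\le r$ with $\im A_k\subset V$. The base case $k=1$ transfers to $p\ne q$ essentially unchanged: since $\im A_i\subset V$ for all $i\ge 1$ forces every $A_i$ to be skew-Hermitian with respect to $\langle\cdot,\cdot\rangle$, the argument producing $z$ with $\langle A_rz,z\rangle\ne 0$ (via $isA_r$ being a nonzero Hermitian matrix) is insensitive to the signature. Lemma \ref{lem:Nexist}, applied with the isotropic line $L=\C A_rz\subset V$, then produces a two-step nilpotent $N$ of rank $\le 2$ and yields a simple $m$-factor that strictly reduces $\eps(g)$, with no $n$-type loop required.

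The genuinely new phenomenon appears in the case $k>1$. With $l>k$ chosen minimal so that $A_{k-1}(K_l)\not\subset V$ and some $z\in K_l$ picked with $A_{k-1}z\notin V$, the identity \eqref{eq:foro11} still gives $\langle A_{k-1}z,A_{l-1}z\rangle\in\ii\R$. The difference from $p=q$ is that now $V\subsetneq V^\perp$, so $A_{k-1}z\notin V$ no longer forces $A_{k-1}z\notin V^\perp$, and we split into two subcases. If $A_{k-1}z\notin V^\perp$, Lemma \ref{lem:Nexist} applies verbatim to build a rank-$\le 2$ two-step nilpotent $N$ with $N(A_{k-1}z)=-A_{l-1}z$ and $N(V^\perp)=0$, so that $m_{\al,l-k,N}g$ strictly decreases $\eps(g)$ exactly as in Proposition \ref{prop:p=q}. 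If instead $A_{k-1}z\in V^\perp\setminus V$, we will invoke the three-step nilpotent factor $n_{\al,l-k,N}$ provided by Lemma \ref{lem:N2exist}, applied with $w=A_{l-1}z\in V$, $v=A_{k-1}z\in V^\perp\setminus V$, and a third vector $u\notin V^\perp$ still to be manufactured.

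The hardest step is this last subcase. Two things must be arranged: exhibiting a vector $u\notin V^\perp$ with $\langle u,w\rangle\in\R\setminus\{0\}$ together with the length relation $2\langle u,w\rangle+\langle v,v\rangle=0$ demanded by Lemma \ref{lem:N2exist}, and then verifying that $\tilde g:=n_{\al,l-k,N}g$ satisfies both $z\in\tilde K_{l-1}$ and $K_{l-1}\subset\tilde K_{l-1}$. The vector $u$ should emerge from a further instance of the reality identity \eqref{eq:orthcond} at a suitable total degree (the natural candidate being $\sum_{i+j=2k-2}\langle A_iz,A_jz\rangle=0$, in which isotropy of $V$ kills almost every summand), possibly after replacing $z$ by $z+z'$ with $z'\in K_{l-1}$ to extract the required nonzero $(V\oplus sV)^\perp$-component. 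Once $N=M-M^*$ is in hand, the structural identities $N(V)=0$, $M^*(V^\perp)=0$ and $\im N^2\subset V$ should force the extra terms in $\tilde A_j=A_j+NA_{j-(l-k)}+\tfrac12 N^2A_{j-2(l-k)}$ to vanish on $K_{l-1}$ and on $z$, yielding $\eps(\tilde g)<\eps(g)$. Finally, Proposition \ref{Prop:fN2tN1} will split any rank-$2$ $m$-factor produced above into two rank-$1$ $m$-factors, bringing the list of generators into the exact form of Table \ref{tab:upqn:pneqq}.
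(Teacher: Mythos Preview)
Your plan correctly identifies where the $p\neq q$ case diverges from Proposition~\ref{prop:p=q}: the problem is that $A_{k-1}z\notin V$ no longer forces $A_{k-1}z\notin V^\perp$. The $k=1$ case and the subcase $A_{k-1}z\notin V^\perp$ do carry over essentially as you describe. But your handling of the remaining subcase $A_{k-1}z\in V^\perp\setminus V$ has a genuine gap.

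You propose to apply Lemma~\ref{lem:N2exist} with $w=A_{l-1}z$, $v=A_{k-1}z$, and a yet-to-be-found $u\notin V^\perp$, and then to reduce $\eps$ via $\tilde g=n_{\al,l-k,N}\,g$. Even granting the existence of $u$, the scheme fails at the verification that $K_j\subset\tilde K_j$ for $j\ge l-1$. Writing $\tilde A_j=A_j+NA_{j-(l-k)}+\tfrac12 N^2A_{j-2(l-k)}$, the quadratic term $\tfrac12 N^2A_{j-2(l-k)}$ need not vanish on $K_j$: the map $N^2$ annihilates only $V^\perp$, and for $j\ge l$ the index $j-2(l-k)$ drops below $k$ (already $j=l$ gives $2k-l<k$), so nothing forces $A_{j-2(l-k)}(K_j)\subset V^\perp$. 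Your hope that the reality identity at total degree $2k-2$ cleanly supplies $u$ is also unfounded: many summands $\langle A_iz,A_jz\rangle$ with $i<k-1$ survive, since there is no reason for those $A_iz$ to lie in $V^\perp$.

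The paper resolves this by introducing a second index $s\ge 1$, the length of the string $\im A_{k-1},\dots,\im A_{k-s+1}\subset V^\perp$ with $\im A_{k-s}\not\subset V^\perp$. From the reality condition one then proves, by a short induction, the family of inclusions $A_{k+i}(K_{k+s+2i})\subset V$ for $-s+1\le i\le 0$; these are exactly what makes the unwanted $N$- and $N^2$-terms vanish on the relevant $K_j$. The relevant vector is then chosen via $A_{k-s}$ (not $A_{k-1}$), and with a new parameter $l$ the reality identity at degree $2k-s+l$ collapses to at most three summands. This leads to a trichotomy you did not anticipate: when $\langle A_{k-s}v,A_{k+l}v\rangle$ is purely imaginary an $m$-factor suffices; when it is real nonzero Lemma~\ref{lem:N2exist} applies directly; and when it is neither, one first multiplies by a suitable $m$-factor to make the inner product real before the $n$-factor can be used.
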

\begin{proof}
We will prove the claim by induction on $\eps(g)$.

The same argument as in the previous proposition shows that we can reduce to the following situation: $V\supset V_1$ is maximal isotropic, and for $l>k>0$, we have $\im A_{l}\subset V $, but $\im A_{k}\subset V^\perp$, $\im A_k\not\subset V$. Since $p\neq q$, we have a decomposition
\[
\C^n=V\oplus (V\oplus sV)^\perp \oplus sV,
\]
and the inner product is definite on $W:=(V\oplus sV)^\perp$.

Since $A_0=\Id$, we may define $s\ge 1$ to be the integer such that
\begin{equation}\label{eq:imagesperp}
\im A_{k-1},\ldots,\im A_{k-s+1}\perp V, \text{ but }\im A_{k-s}\not\perp V.
\end{equation}
We claim that for all $-s+1\le i\le 0$,
\begin{equation}\label{eq:strangeAKinV}
A_{k+i}(K_{k+s+2i})\subset V.
\end{equation}
For $i=0$, this follows from \eqref{eq:orthcond}: for any $v\in K_{k+s}$,
\[
\langle A_{k}(v),A_{k}(v)\rangle=-\sum_{j=-s+1}^{-1} \langle \underbrace{A_{k-j}(v)}_{\in V},\underbrace{A_{k+j}(v)}_{\in V^\perp}\rangle-\sum_{j=1}^{s-1} \langle \underbrace{A_{k-j}(v)}_{\in V^\perp},\underbrace{A_{k+j}(v)}_{\in V}\rangle=0.
\]
The vector $A_k(v)$ is therefore isotropic and perpendicular to the maximal isotropic subspace $V$; it follows $A_k(K_{k+s})\subset V$.

If we assume that we have shown \eqref{eq:strangeAKinV} for $0\ge i\ge i_0+1$, we can show it for $i=i_0\ge -s+1$ again via \eqref{eq:orthcond}: for any $v\in K_{k+s+2i_0}$,
\begin{align*}
\langle &A_{k+i_0}(v),A_{k+i_0}(v)\rangle\\
&=-\sum_{j=-s-i_0+1}^{-1} \langle A_{k+i_0-j}(v),\underbrace{A_{k+i_0+j}(v)}_{\in V^\perp}\rangle - \sum_{j=1}^{s+i_0-1}\langle \underbrace{A_{k+i_0-j}(v)}_{\in V^\perp},A_{k+i_0+j}(v)\rangle=0,
\end{align*}
since e.g.~for $j>0$, we have
\[
A_{k+i_0+j}(v)\in A_{k+i_0+j}(K_{k+s+2i_0})\subset A_{k+i_0+j}(K_{k+s+2(i_0+j)})\subset V
\]
by assumption.

Let $l\ge -s+1$ be such that
\begin{equation}\label{eq:whichKsperp}
A_{k-s}(K_{k+l})\perp V \text{ but } A_{k-s}(K_{k+l+1})\not\perp V
\end{equation}
and choose a vector $v\in K_{k+l+1}$ with $A_{k-s}(v)\not\perp V$. It follows $A_{k+l}(v)\neq 0$. Look at the reality condition
\begin{align}
0&=\sum_{i+j=-s+l}\langle A_{k+i}(v),A_{k+j}(v)\rangle \nonumber\\
&=\langle A_{k-s}(v),A_{k+l}(v)\rangle+\ldots + \langle A_{k+l}(v),A_{k-s}(v)\rangle. \label{eq:realitydifficult}
\end{align}
Let us look at the summands of \eqref{eq:realitydifficult} each at a time. If neither of $i$ and $j$ is $-s$ and at least one, say $i$, is positive, the respective summand vanishes since then, $\im A_{k+i}\subset V$ and $\im A_{k+j}\subset V^\perp$. What happens if $i$ and $j$ are both nonpositive, and neither of them is equal to $-s$?

First of all, this is only possible if $-s+l\le 0$, i.e.~$l\le s$. We have $A_{k+i}(v)\in A_{k+i}(K_{k+l+1})$ and $A_{k+j}(v)\in A_{k+j}(K_{k+l+1})$, so we see from \eqref{eq:strangeAKinV}, that if $k+l+1\le k+s+2i$ or $k+l+1\le k+s+2j$, the respective summand vanishes. If neither of these inequalities is valid, it follows from $i+j=-s+l$ that
\[
k+l+1> k+s+2i=k+(l-i-j)+2i=k+l+i-j \Longrightarrow 1>i-j
\]
and
\[
k+l+1> k+s+2j=k+(l-i-j)+2j=k+l+j-i \Longrightarrow 1>j-i,
\]
so $i=j$. We therefore see: If $-s+l\le 0$ is odd or $-s+l>0$, \eqref{eq:realitydifficult} becomes
\begin{equation}\label{eq:realitydifficultodd}
\langle A_{k-s}(v),A_{k+l}(v)\rangle+\langle A_{k+l}(v),A_{k-s}(v)\rangle=0,
\end{equation}
and if $-s+l\le 0$ is even, \eqref{eq:realitydifficult} becomes
\begin{equation}\label{eq:realitydifficulteven}
\langle A_{k-s}(v),A_{k+l}(v)\rangle + \langle A_{k+\frac{-s+l}{2}}(v),A_{k+\frac{-s+l}{2}}(v)\rangle + \langle A_{k+l}(v),A_{k-s}(v)\rangle=0.
\end{equation}

If we are dealing with \eqref{eq:realitydifficultodd}, we have
\[
\langle A_{k-s}(v),A_{k+l}(v)\rangle\in \ii\cdot \R,
\]
so by Lemma \ref{lem:Nexist} there exists a anti-self-adjoint two-step nilpotent at most rank 2 map $N$ with $N(V^\perp)=0$ and $N(A_{k-s}(v))=-A_{k+l}(v)$. We claim that the loop
\[
\tilde{g}(\la)=m_{\al,s+l,N}(\la)g(\la)=:\sum_i (\la-\al)^{-i}\tilde{A}_i
\]
satisfies $\eps(\tilde{g})<\eps(g)$. Since $N(V^\perp)=0$, we have $\tilde{A}_i=NA_{i-s-l}+A_i=A_i$ for all $i>k+l$ by \eqref{eq:imagesperp}. For $i=k+l$,
\[
\tilde{A}_{k+l}(K_{k+l})=(NA_{k-s}+A_{k+l})(K_{k+l})=NA_{k-s}(K_{k+l})\subset N(V^\perp)=0
\]
by \eqref{eq:whichKsperp} and $\tilde{A}_{k+l}(v)=NA_{k-s}(v)+A_{k+l}(v)=0$ although $A_{k+l}(v)\neq 0$, so $\eps(\tilde{g})<\eps(g)$ and we may use induction.

If we are dealing with \eqref{eq:realitydifficulteven}, there are three subcases. If $\langle A_{k-s}(v),A_{k+l}(v)\rangle\in \ii\cdot \R$, the middle summand vanishes, so the same argument as before applies with an anti-self-adjoint map $N$ sending $A_{k-s}(v)$ to $-A_{k+l}(v)$.

If $\langle A_{k-s}(v),A_{k+l}(v)\rangle\in \R\setminus\{0\}$, we can apply Lemma \ref{lem:N2exist} to find a rank 1 map $M:(V\oplus sV)^\perp\to V$ such that $N=M-M^*$ satisfies
\begin{equation}\label{eq:newk+l}
\frac{1}{2}N^2(A_{k-s}(v))+N(A_{k+\frac{-s+l}{2}}(v))+A_{k+l}(v)=0;
\end{equation}
we want to show that the product
\begin{align*}
\tilde{g}(\la)&=n_{\al,\frac{s+l}{2},N}(\la)g(\la)=\left(\Id+(\la-\al)^{-\frac{s+l}{2}} N+\frac{1}{2}(\la-\al)^{-s-l} N^2\right)g(\la)\\
&=:\sum_i (\la-\al)^{-i} \tilde{A}_i
\end{align*}
satisfies $\eps(\tilde{g})<\eps(g)$. For that, we claim that
\[
\tilde{A}_{k+l+j}(K_{k+l+j})=\left(\frac{1}{2}N^2A_{k-s+j}+NA_{k+\frac{-s+l}{2}+j} +A_{k+l+j}\right)(K_{k+l+j})=0
\]
for all $j\ge 0$. The first summand vanishes for $j= 0$ since $A_{k-s}(K_{k+l})\subset V^\perp$ by \eqref{eq:whichKsperp}, and for $j> 0$ since then, $\im A_{k-s+j}\subset V^\perp$ by \eqref{eq:imagesperp}. The third summand vanishes trivially, so it remains to regard the second. By \eqref{eq:strangeAKinV},
\[
V\supset A_{k+\frac{-s+l}{2}+j}(K_{k+s+2\frac{-s+l}{2}+2j})=A_{k+\frac{-s+l}{2}+j}(K_{k+l+2j})\supset A_{k+\frac{-s+l}{2}+j}(K_{k+l+j}),
\]
so the second summand vanishes as well. Then, \eqref{eq:newk+l} shows $\eps(\tilde{g})<\eps(g)$.

The third case is that $\langle A_{k-s}(v),A_{k+l}(v)\rangle$ is neither real nor purely imaginary. The idea is to multiply with a simple factor of the type $m$ to make this inner product purely real. Let $N$ be a anti-self-adjoint map with $N(V^\perp)=0$ and $N^2=0$ such that
\[
N(A_{k-s}(v))=-\ii\cdot \frac{\Im \langle A_{k-s}(v),A_{k+l}(v)\rangle}{\langle A_{k-s}(v),A_{k+l}(v)\rangle}\cdot A_{k+l}(v);
\]
Lemma \ref{lem:Nexist} allows us to do so since
\begin{align*}
\langle A_{k-s}(v), -\ii\cdot \frac{\Im \langle A_{k-s}(v),A_{k+l}(v)\rangle}{\langle A_{k-s}(v),A_{k+l}(v)\rangle}\cdot A_{k+l}(v)\rangle=-\ii\cdot \Im \langle A_{k-s}(v),A_{k+l}(v)\rangle\in \ii\cdot \R.
\end{align*}
Consider the product
\begin{align*}
\tilde{g}(\la)=m_{\al,s+l,N}(\la)g(\la)=\sum_i (\la-\al)^{-i} \tilde{A}_i,
\end{align*}
where $\tilde{A}_i=NA_{i-s-l}+A_i$. Since the kernel of $N$ contains $V^\perp$, we have $\tilde{A}_i=A_i$ for all $i> k+l$, and for $i\le k+l$, they differ only by endomorphisms with values in $V$. We see that
\begin{align*}
\langle\tilde{A}_{k-s}(v),\tilde{A}_{k+l}(v)\rangle &=\langle \tilde{A}_{k-s}(v),NA_{k-s}(v)+A_{k+l}(v)\rangle \\
&=-\ii\cdot \Im \langle A_{k-s}(v),A_{k+l}(v)\rangle+\langle A_{k-s}(v),A_{k+l}(v)\rangle\in \R,
\end{align*}
and
\begin{align*}
2\cdot\langle &\tilde{A}_{k-s}(v),\tilde{A}_{k+l}(v)\rangle + \langle \tilde{A}_{k+\frac{-s+l}{2}}(v),\tilde{A}_{k+\frac{-s+l}{2}}(v)\rangle\\
&=\langle A_{k-s}(v),A_{k+l}(v)\rangle + \langle A_{k+\frac{-s+l}{2}}(v),A_{k+\frac{-s+l}{2}}(v)\rangle + \langle A_{k+l}(v),A_{k-s}(v)\rangle\\
&=0,
\end{align*}
so we have reduced to the assumptions of case two: we can now multiply $\tilde{g}$ with a simple factor of the type $n_{\al,\frac{s+l}{2},N}$ for an appropriate $N$ as explained in the previous case.
\end{proof}
By the above discussion, we use Lemma \ref{lem:Nexist} to construct $m_{\a,k,N}$, $\rank N\leq2$ to prove Propositions \ref{prop:p=q} and \ref{prop:pneqq}. Though we can only choose $N$ at most rank 2 in Lemma \ref{lem:Nexist}, here we can actually reduce the value of rank of $N$ by the factorization of $m_{\a,k,N}$, $\rank N=2$. This result can be stated as following:
\begin{prop}\label{Prop:fN2tN1}
The loops  $m_{\a,k,N}$, $\rank N=2$ can be written as $m_{\a,k,N_1}m_{\a,k,N_2}$, where $\rank N_i=1$, $N_i^2=0$, $N_i^*=-N_i$, $i=1,2$.
\end{prop}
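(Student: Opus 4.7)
The plan is to decompose $N$ itself as a sum of anti-self-adjoint rank-one pieces $N=N_1+N_2$ with $N_1N_2=N_2N_1=0$, so that the product of loops is clean:
\[
m_{\a,k,N_1}(\l)\,m_{\a,k,N_2}(\l)=\Id+(\l-\a)^{-k}(N_1+N_2)+(\l-\a)^{-2k}N_1N_2,
\]
which equals $m_{\a,k,N}(\l)$ as soon as the two conditions above are met. Thus the real task is purely algebraic: split $N$ into two commuting rank-one anti-self-adjoint two-step nilpotent maps whose product is zero.

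By Lemma \ref{lem:Nexist}, setting $V:=\im N$ we have $\dim V=\rank N=2$, $V$ is isotropic, $\ker N=V^\perp$, and $N$ is completely determined by the map $\tilde N:=(Ns)|_V\colon V\to V$, which is anti-self-adjoint with respect to the positive-definite inner product $(v,w)=\ip{sv}{w}$ on $V$. Since $\rank N=2$ and $\dim V=2$, the operator $\tilde N$ is invertible and anti-self-adjoint on a two-dimensional Hermitian space, hence diagonalizable in a $(\cdot,\cdot)$-orthonormal basis $e_1,e_2$ with nonzero purely imaginary eigenvalues $\mathrm{i}\la_1,\mathrm{i}\la_2$. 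Writing $L_j=\C e_j$ and $P_{L_j}$ for the $(\cdot,\cdot)$-orthogonal projection onto $L_j$, set
\[
\tilde N_j:=\mathrm{i}\la_j P_{L_j}\colon V\to V,\qquad j=1,2,
\]
so that each $\tilde N_j$ is rank one, anti-self-adjoint with respect to $(\cdot,\cdot)$, and $\tilde N=\tilde N_1+\tilde N_2$.

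Now use the reverse direction of Lemma \ref{lem:Nexist} to promote each $\tilde N_j$ to a map $N_j\colon\C^n\to\C^n$ by decreeing $N_j(V^\perp)=0$ and $N_j(sv)=\tilde N_j(v)$ for $v\in V$. By the construction in Lemma \ref{lem:Nexist}, $N_j^*=-N_j$, $\rank N_j=1$, and $N_1+N_2=N$. The two-step nilpotency $N_j^2=0$ and the key vanishing $N_iN_j=0$ both follow at once from the inclusion chain
\[
\im N_j=L_j\subset V\subset V^\perp=\ker N\subset\ker N_i
\]
(the last inclusion because $V^\perp$ lies in the kernel of every such $N_i$ by construction). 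Plugging into the displayed formula above gives $m_{\a,k,N_1}m_{\a,k,N_2}=m_{\a,k,N}$.

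There is no serious obstacle: the one point worth double-checking is that the decomposition produces $N_1N_2=0$, not merely $N_1+N_2=N$, but this is automatic because each $\im N_j$ sits inside the common kernel $V^\perp$. The eigenspace decomposition of $\tilde N$ is what makes the individual pieces anti-self-adjoint; any other rank-one decomposition of $\tilde N$ could fail this and hence fail to give a valid generator in Table \ref{tab:upqn:pneqq}.
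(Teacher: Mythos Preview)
Your proof is correct and follows essentially the same idea as the paper: both diagonalize the standard-skew-Hermitian operator $Ns$ and split $N$ along the resulting rank-one eigenprojections. The only difference is presentational---the paper diagonalizes $Ns$ globally on $\C^n$ via a unitary $U$, writing $N_j=U\,\diag(\ldots,\ii t_j,\ldots)\,\bar U^t s$, whereas you restrict first to $V=\im N$ using Lemma~\ref{lem:Nexist} and then lift back. Your route makes the vanishing $N_iN_j=0$ more transparent (it drops out immediately from $\im N_j\subset V\subset V^\perp\subset\ker N_i$), while the paper deduces $N_i^2=0$ and $[N_1,N_2]=0$ somewhat tersely ``by $N^2=0$''; unpacking that claim amounts to the same isotropy observation.
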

\begin{proof}
Since $N$ satisfies $N^*=-N$ in $m_{\a,k,N}$, then $Ns$ is skew-Hermitian with respect to the standard inner product. Then $Ns=U\diag(it_1,it_2,0,\ldots,0)\bar U^t$ for some $U\in \Un_n$ and $t_i\in\R$, $i=1,2$, since $\rank N=2$, here we can assume $t_i\neq0$. Then $N=N_1+N_2$, where $N_1=U\diag(it_1,0,0,\ldots,0)\bar U^ts$, $N_2=U\diag(0,it_2,0,\ldots,0)\bar U^ts$. It is easy to see $N_i^*=-N_i$ and $N_i^2=0$, $[N_1, N_2]=0$ by $N^2=0$. Then we have $m_{\a,k,N}=m_{\a,k,N_1}m_{\a,k,N_2}$.
\end{proof}

\section{The generators for $\lupq$}\label{sec:pmnGUpq}
In this section we find generators for the group of $\GL(n,\C)$-valued rational  loops satisfying the $\Un_{p,q}$-reality condition. Besides the nilpotent loops defined in Table \ref{tab:upqn:pneqq}, we also consider the projective loops $p_{\al,L}$, $q_{\alpha,\beta,L}$ defined in Tables \ref{tab:upq:p=q} and \ref{tab:upq:pneqq} below. We will prove that these nilpotent and projective loops together generate  $\lupq$.

The complex line $L$ in $p_{\al,L}$ and $q_{\alpha,\beta,L}$ can be replaced by vector subspace $V$, i.e. we can also define $p_{\al,V}$ and $q_{\alpha,\beta,V}$ similarly. One easily sees that the first two types of simple elements satisfy the reality condition. Also, the loops $p_{\al,L}$ already appear in \cite{Ter00}, Section 11. Note that the $q_{\al,\beta,L}$ are products of two $\GL(n,\C)$-simple elements: $q_{\al,\beta,L}=p_{\al,\beta,L,(sL)^\perp}p_{\bar{\beta},\bar{\al},sL,L^\perp}$. Furthermore, there is an overlap between the first two types: $ q_{\al,\ol{\al},L}=p_{\al,L\oplus sL}$. Since $\C^n=L\oplus L^\perp,\, L\cap L^\perp=0$ in $p_{\al,L}$, then $\pi_L$ in $p_{\al,L}$ satisfies $\pi_L^{*}=\pi_L$, and $L$ is isotropic complex line in $q_{\alpha,\beta,L}$, i.e. $\C^n=L\oplus (L\oplus sL)^\perp \oplus sL$ is Hermitian orthogonal decomposition, then $\pi_L$ in $q_{\alpha,\beta,L}$ satisfies $\bar\pi_L^{t}=\pi_L$.
\begin{table}[h]
\caption{Generators for $\lupp$}
\begin{tabular}{|c|c|c|}
\hline
Name & Definition & Conditions\\
\hline
$p_{\al,L}$ & $\left(\frac{\la-\al}{\la-\bar{\al}}\right)\pi_L+\pi_{L^\perp}$ & \begin{tabular}{c} $\C^n=L\oplus L^\perp,\, L\cap L^\perp=0$,\\ $L$ complex line\end{tabular}\\
\hline
$q_{\alpha,\beta,L}$ & $\left(\frac{\la-\al}{\la-\beta}\right)\pi_L+\pi_{(L\oplus sL)^\perp} +\left(\frac{\la-\ol{\beta}}{\la-\ol{\al}}\right) \pi_{sL}
$ & \begin{tabular}{c} $\C^n=L\oplus (L\oplus sL)^\perp \oplus sL$ \\ $L$ isotropic complex line\end{tabular}\\
\hline
$m_{\al,k,N}$ & $\Id+\left(\frac{1}{\la-\al}\right)^k N$ & \begin{tabular}{c} $\al\in\R$,  $k=1, 2$, \\ $N^2=0$,
 $N^*=-N$, $\rank N=1$ \end{tabular}\\
\hline
\end{tabular}
\label{tab:upq:p=q}
\end{table}
\begin{table}[h]
\caption{Generators for $\lupq, (p\neq q)$}
\begin{tabular}{|c|c|c|}
\hline
Name & Definition & Conditions\\
\hline
$p_{\al,L}$ & $\left(\frac{\la-\al}{\la-\bar{\al}}\right)\pi_L+\pi_{L^\perp}$ & \begin{tabular}{c} $\C^n=L\oplus L^\perp,\, L\cap L^\perp=0$,\\ $L$ complex line\end{tabular}\\
\hline
$q_{\alpha,\beta,L}$ & $\left(\frac{\la-\al}{\la-\beta}\right)\pi_L+\pi_{(L\oplus sL)^\perp} +\left(\frac{\la-\ol{\beta}}{\la-\ol{\al}}\right) \pi_{sL}
$ & \begin{tabular}{c} $\C^n=L\oplus (L\oplus sL)^\perp \oplus sL$ \\ $L$ isotropic complex line\end{tabular}\\
\hline
$m_{\al,k,N}$ & $\Id+\left(\frac{1}{\la-\al}\right)^k N$ & \begin{tabular}{c} $\al\in\R$,  $k=1, 2$, \\ $N^2=0$,
 $N^*=-N$, $\rank N=1$ \end{tabular}\\
\hline
$n_{\al,k,N}$ & $\Id+\left(\frac{1}{\la-\al}\right)^k N+\frac{1}{2}\left(\frac{1}{\la-\al}\right)^{2k} N^2$ & \begin{tabular}{c} $\al\in \R$, $k=1$, $N=M-M^*$, where\\ $M:(V\oplus sV)^\perp\to V$,\\ $V$ max.~isotropic,\\$\rank M=1$, $\rank N=2$ \end{tabular}\\
\hline
\end{tabular}
\label{tab:upq:pneqq}
\end{table}

Compared to the nilpotent loops in Table \ref{tab:upqn:pneqq} we have restricted the values of the number $k$. This is possible because of the following proposition:
\begin{prop}\label{Ureducek}
The loops  $m_{\a,k,N}$, $k\in \N$ (resp. $n_{\a,k, N}$, $k\in\N$) can be factored as the products of some $q_{\a,\beta, L}$ and  $m_{\a,k,rN}$, $k=1,2$, $r\in\R$ (resp. $n_{\a,1, rN}$, $r\in\R$).
\end{prop}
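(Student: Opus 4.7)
The plan is to proceed by induction on $k$ in both cases; the base cases ($k=1,2$ for $m$, $k=1$ for $n$) are immediate since the loop is already a generator in Tables \ref{tab:upq:p=q}--\ref{tab:upq:pneqq}. For the inductive step the strategy is uniform: pick an auxiliary $\beta \in \R\setminus\{\alpha\}$, let $L := \im N$ in the $m$-case (a $1$-dimensional isotropic subspace, since $N^*=-N$ and $N^2=0$ force $\im N \subset \ker N = (\im N)^\perp$) and $L := \im M \subset V$ in the $n$-case, and conjugate by $Q := q_{\alpha,\beta,L}$.

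A direct block computation in the decomposition $\C^n = L \oplus (L\oplus sL)^\perp \oplus sL$ gives $QNQ^{-1} = \bigl(\tfrac{\lambda-\alpha}{\lambda-\beta}\bigr)^{\!2} N$ in the $m$-case, and in the $n$-case
\[
QMQ^{-1} = \frac{\lambda-\alpha}{\lambda-\beta}\,M, \qquad QM^*Q^{-1} = \frac{\lambda-\alpha}{\lambda-\beta}\,M^*,
\]
so that again $QNQ^{-1} = \tfrac{\lambda-\alpha}{\lambda-\beta}\,N$. The second identity uses crucially that $\beta \in \R$: the eigenvalue of $Q^{-1}$ on $sL$ is $\tfrac{\lambda-\bar\alpha}{\lambda-\bar\beta}$, and only reality of $\beta$ matches this with the eigenvalue of $Q$ on $L$. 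Using $n_{\alpha,k,N} = \exp((\lambda-\alpha)^{-k}N)$, valid since $N^3=0$, we obtain
\[
Q\,m_{\alpha,k,N}\,Q^{-1} = \Id + \frac{N}{(\lambda-\alpha)^{k-2}(\lambda-\beta)^2}, \qquad Q\,n_{\alpha,k,N}\,Q^{-1} = \exp\!\Bigl(\frac{N}{(\lambda-\alpha)^{k-1}(\lambda-\beta)}\Bigr).
\]

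Real partial fraction decomposition gives
\[
\frac{1}{(\lambda-\alpha)^{k-2}(\lambda-\beta)^2} = \sum_{i=1}^{k-2}\frac{A_i}{(\lambda-\alpha)^i} + \frac{B_1}{\lambda-\beta} + \frac{B_2}{(\lambda-\beta)^2}
\]
for the $m$-case and $\tfrac{1}{(\lambda-\alpha)^{k-1}(\lambda-\beta)} = \sum_{i=1}^{k-1}\tfrac{A_i}{(\lambda-\alpha)^i} + \tfrac{B}{\lambda-\beta}$ for the $n$-case, all with real coefficients. Using $N^2=0$ (for $m$, so that loops $\Id + c(\lambda)N$ pairwise commute and multiply linearly in $c$) and the commutativity of all scalar multiples of $N$ (for $n$, so that exponentials split additively), one obtains
\begin{align*}
Q\,m_{\alpha,k,N}\,Q^{-1} &= \Bigl(\prod_{i=1}^{k-2} m_{\alpha,i,A_iN}\Bigr)\, m_{\beta,1,B_1N}\, m_{\beta,2,B_2N},\\
Q\,n_{\alpha,k,N}\,Q^{-1} &= \Bigl(\prod_{i=1}^{k-1} n_{\alpha,i,A_iN}\Bigr)\, n_{\beta,1,BN}.
\end{align*}
Conjugating back by $Q^{-1}$ and $Q$ expresses $m_{\alpha,k,N}$ (resp.\ $n_{\alpha,k,N}$) as a product of $q$-loops, generators $m_{\cdot,1,\cdot}$, $m_{\cdot,2,\cdot}$ (resp.\ $n_{\cdot,1,\cdot}$) at real parameters, and factors of the form $m_{\alpha,i,A_iN}$ with $3\leq i \leq k-2$ (resp.\ $n_{\alpha,i,A_iN}$ with $2\leq i \leq k-1$) to which the inductive hypothesis applies. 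The conditions $\rank N = 1$ and $N^*=-N$ (resp.\ $N = M-M^*$ with $\rank N = 2$) are preserved automatically by real scaling, and factors with vanishing scalar are trivially equal to $\Id$.

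The main obstacle is the $n$-case: the clean formula $QNQ^{-1} = (\text{scalar})\cdot N$, on which the additive splitting of the exponential relies, breaks down if $\beta \notin \R$ because $M$ and $M^*$ then get scaled by different factors $\tfrac{\lambda-\alpha}{\lambda-\beta}$ and $\tfrac{\lambda-\alpha}{\lambda-\bar\beta}$. Insisting on $\beta\in\R$ resolves this and, fortuitously, restricts the partial-fraction coefficients to $\R$, so the resulting pieces stay within the generator tables without any ``conjugate pair'' adjustment.
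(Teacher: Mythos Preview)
Your argument is correct and follows essentially the same route as the paper's proof: conjugate by a $q$-loop with real $\beta$ to shift part of the pole from $\alpha$ to $\beta$, then use partial fractions and induction. The only notable difference is in the $n$-case, where the paper conjugates by $q_{\alpha,\beta,V}$ for the full maximal isotropic $V$ (and must then remark at the end that $q_{\alpha,\beta,V}$ factors into line-type $q$'s), while you conjugate directly by $q_{\alpha,\beta,L}$ with $L=\im M$; your choice works because $\ker M\supset V\oplus sV\supset L\oplus sL$ and $\ker M^*=L^{\perp_{\langle\cdot,\cdot\rangle}}\supset (L\oplus sL)^\perp\oplus L$, so the block computation goes through, and it saves that final factorization step.
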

\begin{proof}
For the factorization of $m_{\a,k,N}$, we just prove the case $k>2$. Here we can assume $\rank N=1$.  Assume $L=\im N$, then by Lemma \ref{lem:Nexist}, we have $L$ is isotropic and $\ker N=L^\bot$. Then choose $\beta\in \R$, $\beta\neq\a$, we have
\begin{eqnarray*}
&&q_{\a,\beta, L} m_{\a,k,N} q_{\a,\beta, L}^{-1}
=\exp[\Ad (q_{\a,\beta, L}) \frac{1}{(\l-\a)^k}N]\\
&&=\exp[\frac{1}{(\l-\a)^{k-2}(\l-\beta)^2 }\pi_{L}N\pi_{sL}]
=\exp[\frac{1}{(\l-\a)^{k-2}(\l-\beta)^2 }N].
\end{eqnarray*}
 Then we can use partial fraction decomposition to decompose the above formula as the product of factors of the form $m_{\a,l_{\a},r_1 N}$, $1\leq l_{\a}\leq k-2$, $r_1\in\R$,  $m_{\beta,l_{\beta},r_2 N}$, $1\leq l_{\beta}\leq 2$, $r_2\in\R$ and apply the induction hypothesis, to  $m_{\a,l_{\a},r_1 N}$.\\
\indent For the factorization of $n_{\a,k,N}$, we just prove the case $k>1$. Notice $N=M-M^{*}$, and $M: (V\oplus sV)^\perp\longrightarrow V$, $M(V\oplus sV)=0$, $V^\perp \subset \ker M^*$, $M^*(sV)\subset (V\oplus sV)^\perp$ for some maximal isotropic subspace $V$. Then choose $\beta\in \R$, $\beta\neq\a$, we have
\begin{eqnarray*}
&&q_{\a,\beta, V} n_{\a,k,N} q_{\a,\beta, V}^{-1}=\exp[\Ad (q_{\a,\beta, V}) \frac{1}{(\l-\a)^k}(M-M^*)]\\
&&=\exp[\frac{1}{(\l-\a)^{k-1}(\l-\beta) }(\pi_{V}M\pi_{(V\oplus sV)^\bot}-\pi_{(V\oplus sV)^\bot}M^*\pi_{sV})]\\
&&=\exp[\frac{1}{(\l-\a)^{k-1}(\l-\beta) } (M-M^*)],
\end{eqnarray*}
 where we use $\pi_{sV}^*=\pi_{V}$ , $(\pi_{(V\oplus sV)^\bot})^*=\pi_{(V\oplus sV)^\bot}$. Notice $N=M-M^*$, then we can use partial fraction decomposition to decompose the above formula as the product of factors of the form $n_{\a,l_{\a},r_1 N}$, $1\leq l_{\a}\leq k-1$, $r_1\in\R$, $n_{\beta,1,r_2 N}$, $r_2\in\R$ and apply the induction hypothesis, to  $n_{\a,l_{\a},r_1 N}$.\\
\indent Finally we just notice any $q_{\a,\beta,V}$ can be factored as the product of some $q_{\a,\beta, L}$, then we can get the final proof of this proposition.
\end{proof}

Now we can give the proof of the generating theorem of rational loop group $\lupq$. By the discussing above, we just prove any $\lupq$ can be factored as the product of some generators and a negative algebraic loop with only one real singularity.
\begin{thm}\label{Upq:ge}
The rational loop group  $\lupq$ is generated by the simple elements given in Table \ref{tab:upq:p=q} if $p=q$ and Table \ref{tab:upq:pneqq} if $p\neq q$ .
\end{thm}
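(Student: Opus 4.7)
The plan is to proceed by induction on the pole data of $g\in\lupq$, peeling off singularities one at a time using projective loops of type $p_{\alpha,L}$ and $q_{\alpha,\beta,L}$, until $g$ has only one real singularity, at which point Theorem \ref{Upq:facnalstonil}, Proposition \ref{Ureducek} and Proposition \ref{Prop:fN2tN1} together finish the argument.

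More precisely, I would fix a well-founded ordering on loops in $\lupq$ built from the pole data $(k,\rank A)$ at each singularity, as defined in the Preliminaries, together with a lexicographic comparison across singularities. The base case is a loop with no poles, which by the normalisation $g(\infty)=\Id$ forces $g=\Id$. For the induction step I would pick a singularity $\alpha$ of maximal pole data and distinguish two cases. If $\alpha\notin\R$, then the reality condition $\tau(g(\bar\lambda))=g(\lambda)$ forces $\bar\alpha$ to be a matched singularity; I would multiply $g$ by a suitable $q_{\alpha',\beta',L}^{\pm 1}$ whose denominators include $\lambda-\alpha$ and $\lambda-\bar\alpha$, choosing the isotropic line $L$ inside $\im A$ or $(\ker A)^\perp$ and an auxiliary $\beta'\in\R$ disjoint from the existing singular set, so that the product strictly decreases the pole data simultaneously at $\alpha$ and $\bar\alpha$ as dictated by reality. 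If $\alpha\in\R$, then the reality condition at $\alpha$ imposes algebraic relations on the leading coefficient $A$ at $\alpha$; depending on whether a non-isotropic line $L$ in the relevant subspace is available (giving a factor $p_{\alpha,L}^{\pm 1}$) or one is forced into isotropic lines (giving $q_{\alpha,\beta,L}^{\pm 1}$ with real $\beta$), one constructs the corresponding cancelling simple factor, applies the induction hypothesis to the reduced loop, and multiplies back.

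Once the induction terminates, $g$ has a single real singularity, and Theorem \ref{Upq:facnalstonil} expresses it as a product of nilpotent loops $m_{\alpha,k,N}$ (and $n_{\alpha,k,N}$ if $p\neq q$) with arbitrary $k\ge 1$ and $\rank N\le 2$. Proposition \ref{Ureducek} then trades each such $m_{\alpha,k,N}$ with $k>2$ (respectively each $n_{\alpha,k,N}$ with $k>1$) for products involving only $k\in\{1,2\}$ (respectively $k=1$), at the cost of auxiliary $q_{\alpha,\beta,L}$ factors, and Proposition \ref{Prop:fN2tN1} splits each rank-two $m_{\alpha,k,N}$ into two rank-one ones. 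This matches exactly the shape of the generators listed in Tables \ref{tab:upq:p=q} and \ref{tab:upq:pneqq}.

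The main obstacle I anticipate is the induction step itself, namely producing the correct cancelling projective factor so that both the reality condition is preserved and the pole data strictly drops. This requires tracking how $\im A$, $\ker A$ and their $\langle\cdot,\cdot\rangle$-orthogonals interact with the decomposition $\C^n=V\oplus (V\oplus sV)^\perp\oplus sV$ associated with a maximal isotropic $V$, and handling separately the subcases in which the image of the leading coefficient is isotropic versus of definite signature, as well as the boundary situation $p=q$ where no non-isotropic line in a maximal isotropic complement is available and one must work with $q_{\alpha,\beta,L}$ alone.
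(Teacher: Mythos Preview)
Your overall strategy matches the paper's: reduce by projective factors to a loop with a single real singularity, then invoke Theorem~\ref{Upq:facnalstonil} and Proposition~\ref{Ureducek}. But the induction step for \emph{real} singularities has a genuine gap.

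First, note that for $\alpha\in\R$ the loop $p_{\alpha,L}=\frac{\lambda-\alpha}{\lambda-\bar\alpha}\pi_L+\pi_{L^\perp}$ is the identity, so it cannot help cancel a real pole; your dichotomy ``non-isotropic line available $\Rightarrow$ use $p_{\alpha,L}$'' does not apply here. More substantially, when $\alpha,\beta\in\R$ are both singularities, the reality condition forces $V:=\im g_{-k}$ to be isotropic and gives
\[
\langle g_{-k+1}v,\,g_{-k}v\rangle+\langle g_{-k}v,\,g_{-k+1}v\rangle=0,
\]
so $\langle g_{-k+1}v,g_{-k}v\rangle$ is purely imaginary. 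If it is zero for some $v$ with $g_{-k}v\neq0$, then indeed $q_{\beta,\alpha,sL}\cdot g$ drops the pole order. But if it lies in $\ii\R\setminus\{0\}$, the projective factor $q$ alone does \emph{not} lower the pole data: the kernel of the new leading coefficient $\pi_{(L\oplus sL)^\perp}g_{-k}+\pi_{sL}g_{-k+1}$ does not gain the vector $v$, precisely because $\pi_{sL}g_{-k+1}v\neq0$. In this subcase the paper multiplies instead by a nilpotent loop $m_{\alpha,1,N}$, with $N$ produced by Lemma~\ref{lem:Nexist} so that $N(g_{-k+1}v)=(\beta-\alpha)g_{-k}v$, and checks directly that the pole data drops. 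Thus nilpotent factors are needed already during the peeling of real poles, not only at the terminal single-singularity stage; your proposal to use only $p$- and $q$-type factors in the induction step will stall in this situation.
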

\begin{proof}
Let $g\in\lupq$. Observe that $g$ is holomorphic (in particular $\GL(n,\C)$-valued) at a point $\al$ if and only if it is holomorphic at $\ol{\al}$. We proceed in three steps as in the proof of \cite[Theorem 4.1]{Goe13}.

Let us first consider the case of $g$ having more than one singularity, and not all of them real. Let $\al\in \C\setminus \R$ be one of them, and choose $\beta\neq \al,\bar{\al}$ to be another (real or complex) singularity; if there is none, let $\beta$ be an arbitrary real number. If $\al$ is a pole, write down the Laurent expansion of $g$ in $\frac{\la-\al}{\la-\beta}$ around $\al$ as
\[
g(\la)=\sum_{j=-k}^\infty \left(\frac{\la-\al}{\la-\beta}\right)^j g_j
\]
with $g_{-k}\neq 0$; otherwise continue with \eqref{eq:nopoleupq}. If there exists an isotropic complex line  $L\subset \im g_{-k}$, then the loop
\begin{align*}
q_{\al,\beta,L}(\la) g(\la)&=\left(\left(\frac{\la-\al}{\la-\beta}\right)\pi_L+\pi_{(L\oplus sL)^\perp} + \left(\frac{\la-\ol{\beta}}{\la-\ol{\al}} \right)\pi_{sL}\right)g(\la)\\
&=\left(\frac{\la-\beta}{\la-\al}\right)^k \left(\pi_{(L\oplus sL)^\perp}+\left(\frac{\al-\bar{\beta}}{\al-\bar{\al}}\right)\pi_{sL}\right)g_{-k}+\ldots
\end{align*}
has a pole of lower degree at $\al$; note that we used here that $\al$ is nonreal.

If there exist no isotropic complex lines of $\im g_{-k}$, let $L\subset\im g_{-k}$ and note that $L\cap L^\perp=0$. Then,
\begin{align*}
p_{\al,L}(\la)g(\la)&=\left(\left(\frac{\la-\al}{\la-\bar{\al}}\right)\pi_L+\pi_{L^\perp}\right)\left(\left(\frac{\la-\beta}{\la-\al}\right)^kg_{-k}+\ldots\right)\\
&=\left(\frac{\la-\beta}{\la-\al}\right)^k\pi_{L^\perp}\circ g_{-k}+\ldots,
\end{align*}
has a pole of lower degree at $\al$.

Repeating this, we obtain a loop $g$ without pole at $\al$, whose Laurent expansion we write as
\begin{equation}\label{eq:nopoleupq}
g(\la)=g_0+\left(\frac{\la-\al}{\la-\beta}\right)g_1+\ldots.
\end{equation}
If $g_0$ is invertible, we have removed the singularity at $\alpha$, so assume that $g_0$ is singular. As in the previous proofs, we try to reduce the order of the zero of the map $\la\mapsto \det g(\la)$ by multiplying simple factors.

If $\im g_0\cap (\im g_0)^\perp=0$, it is easy to see $\C^n=\im g_0 \oplus (\im g_0)^\perp$. Since $g_0$ is singular, then dim $(\im g_0)^\perp >=1$. We will prove there exists a complex line $L\subset (\im g_0)^\perp$ such that $\langle L, L\rangle\neq 0$. If dim $(\im g_0)^\perp =1$, since $((\im g_0)^\perp)^\perp=\im g_0$, we have got the conclusion. Next we consider the case dim $(\im g_0)^\perp>1$. Assume for any complex line $L\subset (\im g_0)^\perp$, we have $\langle L, L\rangle= 0$, then we can choose $u, v\in(\im g_0)^\perp$ such that $\langle u, v\rangle \neq 0$, if not, we must have $(\im g_0)^\perp$ is isotropic, then $\im g_0=((\im g_0)^\perp)^\perp\supset (\im g_0)^\perp$, it is a contradiction. Then we have
\begin{eqnarray*}
0=\langle \l u+v, \l u+v\rangle =|\l|^2\langle u, u\rangle+ 2\Re \bar\l \langle u,v\rangle +\langle v, v\rangle=2\Re \bar\l \langle u,v\rangle
 \end{eqnarray*}
for all $\l\in \C$. It is a contradiction. Assume $L\subset (\im g_0)^\perp$(i.e. $\im g_0\subset L^\perp$) such that $\langle L, L\rangle\neq 0$, then we can reduce the order of the zero by regarding $p_{\bar{\al},L}g$. \\
\indent If $\im g_0\cap (\im g_0)^\perp\neq 0$, let $V:=\im g_0\cap (\im g_0)^\perp$ and complex line $L\subset V$. Note that $L$ is isotropic and $\im g_0\subset L^\perp$. Then the order of the zero is now reduced by regarding $q_{\beta,\al,sL}g$

Repeating this, we obtain a loop that  has no singularity at $\al$.

\vspace{0.4cm} If $g$ has several singularities, but all of them are real, let $\al$ and $\beta$ be two of those. Because $\al$ and $\beta$ are real, the reality condition says that they are automatically poles (if e.g.~$\al$ was not a pole, then $g(\al)^*g(\al)=\Id$ and hence $g(\al)$ would be invertible). Expanding $g$ in a Laurent series as before, it follows from the reality condition that $V:=\im g_{-k}$ is isotropic and that
\begin{equation}\label{eq:upqrealitycond2nd}
\langle g_{-k+1}v,g_{-k}w\rangle + \langle g_{-k} v,g_{-k+1}w\rangle=0
\end{equation}
for all $v$ and $w$. Let $L\subset V$ be a line, and let $v$ be such that $g_{-k}(v)$ spans $L$. There are two cases: either
\begin{equation}\label{eq:upqrealitycond3rd}
\langle g_{-k+1}v,g_{-k}v\rangle=0
\end{equation}
or
\begin{equation}\label{eq:upqrealitycond4th}
\langle g_{-k+1}v,g_{-k}v\rangle\in \ii\cdot \R\setminus\{0\}.
\end{equation}
In the first case we claim that the product $q_{\beta,\al,sL}g$ has a pole of lower degree than $g$ at $\al$. To see this, we calculate
\[
q_{\beta,\al,sL}(\la)g(\la)=\left(\frac{\la-\beta}{\la-\al}\right)^k(\pi_{(L\oplus sL)^\perp} g_{-k} + \pi_{sL}g_{-k+1})+\ldots
\]
Denoting $\tilde{g}_{-k}:=\pi_{(L\oplus sL)^\perp} g_{-k} + \pi_{sL}g_{-k+1}$, we need to show that $\dim \ker g_{-k}<\dim \ker \tilde{g}_{-k}$. Equation \eqref{eq:upqrealitycond2nd} implies that $g_{-k+1}(\ker g_{-k}) \perp \im g_{-k}$, hence $\ker g_{-k}\subset \tilde{g}_{-k}$. Furthermore if $v$ is such that $g_{-k}(v)$ spans $L$, then \eqref{eq:upqrealitycond3rd} implies  that $v\in \ker \tilde{g}_{-k}$.

In the second case, i.e.\ if \eqref{eq:upqrealitycond4th} holds, we have $g_{-k+1}(v)\notin L^\perp$. Applying Lemma \ref{lem:Nexist} we find a two-step nilpotent anti-self-adjoint endomorphism $N$ with $N(L^\perp)=0$, $\im N \subset L$ and $N(g_{-k+1}(v)) = (\beta-\alpha)g_{-k}(v)$. We claim that the product $m_{\alpha,1,N}g$ has a pole of lower degree than $g$ at $\alpha$. We compute using $N g_{-k} = 0$:
\begin{align*}
m_{\alpha,1,N}(\lambda)g(\lambda) &= \left(\frac{\lambda-\beta}{\lambda-\alpha}\right)^k g_{-k} + \frac{1}{\lambda-\alpha} \left(\frac{\lambda-\beta}{\lambda-\alpha}\right)^{k-1} N g_{-k+1} + \ldots\\
&= \left(\frac{\lambda-\beta}{\lambda-\alpha}\right)^k \left(g_{-k} + \frac{1}{\lambda-\beta} Ng_{-k+1}\right) + \ldots \\
&=\left(\frac{\lambda-\beta}{\lambda-\alpha}\right)^k \left(g_{-k} + \frac{1}{\alpha-\beta} Ng_{-k+1}\right) + \ldots
\end{align*}
Now, denoting $\tilde{g}_{-k} = g_{-k} + \frac{1}{\alpha-\beta} Ng_{-k+1}$, we have $\ker g_{-k}\subset \ker \tilde{g}_{-k}$ as well as $\tilde{g}_{-k}(v)=0$, so the order of the pole at $\alpha$ has decreased.

Repeating this, we obtain a loop $g$ without  singularity at $\al$. Then we can assume $g$ is a negative algebraic loop with only one real singularity. Then Theorem \ref{Upq:facnalstonil} and Proposition \ref{Ureducek} complete the proof.
\end{proof}

\section{Projective generators for $\lupq$}\label{sec:pGUpq}

Note that the generators $q_{\alpha,\beta,L}$ are defined via the decomposition $\C^n=L\oplus(L\oplus sL)^\perp\oplus sL$, where $L$ is an isotropic line (or subspace). This data is clearly not preserved by the adjoint action of $\Un_{p,q}$. In the following proposition we generalize the $q_{\alpha,\beta,L}$ to an $\Ad\, \Un_{p,q}$-invariant set of rational loops.

 \begin{prop}\label{prop:proj2}
Choose any two isotropic complex lines $L, K$ in general position, i.e., $\langle L,K\rangle\neq0$. There is a natural vector space decomposition: $\C^n = L \oplus K \oplus(L^\perp \cap K^\perp)$ inducing $3$ projections onto each component along the rest. Let $W$ denote the third component of dimension $(n-2)$. Then $L \oplus W = L^\perp$, $K \oplus W = K^\perp$, $L \oplus K=W^\perp$, and these three projections are naturally denoted by $\pi_{L,K^\perp}$, $\pi_{K,L^\perp}$, $\pi_{W,W^\perp}$. The following rational loop lies in $\lupq$ and is equal to $q_{\alpha,\beta,L}$ when $K=sL$:
\[
g_{\a,\beta, L, K, W}:=\left(\frac{\l-\a}{\l-\beta}\right)\pi_{L,K^\perp}+\left(\frac{\l-\bar\beta}{\l-\bar\a}\right)\pi_{K,L^\perp} + \pi_{W,W^\perp} .
\]
Conversely, for any rank $1$ projection $\pi$ satisfying $ \pi \pi^* =  \pi^* \pi =0 $ and for which $\Im(\pi)$ and $\Im(\pi^*)$ are two isotropic complex lines in general position, we recover the above construction via
\[
L:=\Im(\pi), \quad
K:=\Im(\pi^*), \quad
\pi_{L,K^\perp}=\pi, \quad
\pi_{K,L^\perp}=\pi^*, \quad
\pi_{W,W^\perp}= I - \pi - \pi^*.
\]

\end{prop}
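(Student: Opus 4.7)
The plan is to first verify the decomposition $\C^n = L \oplus K \oplus (L^\perp \cap K^\perp)$ and the three stated equalities of subspaces. Since $L,K$ are one-dimensional with $\langle L,K\rangle \neq 0$, we have $L \not\subset K^\perp$, so $L^\perp$ and $K^\perp$ are distinct hyperplanes and $W := L^\perp \cap K^\perp$ has dimension $n-2$. Also $L \cap K = 0$, and if $aL + bK \in W$ then pairing with $L$ gives $b\langle K,L\rangle = 0$, so $b=0$ (and similarly $a=0$). A dimension count then yields the direct sum. The three equalities $L^\perp = L \oplus W$, $K^\perp = K \oplus W$, $W^\perp = L \oplus K$ follow from isotropy of $L,K$ and the identity $(L^\perp \cap K^\perp)^\perp = L + K$.

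The core step, and the one requiring care, is computing the adjoints of the three projections with respect to the indefinite form $\langle\cdot,\cdot\rangle$. Writing any $v = v_L + v_K + v_W$ in the decomposition, I would show
\[
\langle \pi_{L,K^\perp} v, w\rangle = \langle v_L, w_K \rangle = \langle v, \pi_{K,L^\perp} w\rangle,
\]
where the cross terms $\langle v_L, w_L\rangle$, $\langle v_L, w_W\rangle$, $\langle v_K, w_K\rangle$, $\langle v_W, w_K\rangle$ all vanish because $L$ and $K$ are isotropic and because $W \perp L$, $W \perp K$. This gives $\pi_{L,K^\perp}^* = \pi_{K,L^\perp}$ (and by symmetry the reverse), while the analogous calculation yields $\pi_{W,W^\perp}^* = \pi_{W,W^\perp}$. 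I expect this adjoint computation to be the main (and really only) nontrivial obstacle, because the decomposition is not Hermitian-orthogonal, yet the isotropy precisely makes adjunction swap the two line-projections.

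With these adjoint identities in hand, the reality condition is immediate. The three projections sum to $\Id$ and are mutually annihilating (e.g.\ $\pi_{L,K^\perp}\pi_{K,L^\perp}=0$ since $K \subset K^\perp = \ker \pi_{L,K^\perp}$), so
\[
g_{\alpha,\beta,L,K,W}(\lambda)^{-1} = \tfrac{\lambda-\beta}{\lambda-\alpha}\pi_{L,K^\perp} + \tfrac{\lambda-\bar\alpha}{\lambda-\bar\beta}\pi_{K,L^\perp} + \pi_{W,W^\perp}.
\]
Taking the adjoint of $g(\bar\lambda)$, the scalar coefficients conjugate to $\tfrac{\lambda-\bar\alpha}{\lambda-\bar\beta}$ and $\tfrac{\lambda-\beta}{\lambda-\alpha}$, and the projections swap by the previous step, giving exactly the formula for $g(\lambda)^{-1}$. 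For the comparison with $q_{\alpha,\beta,L}$ when $K = sL$, I would just note that $\langle v, sv\rangle = \|v\|^2 > 0$ for $v \in L$ so $L,sL$ are in general position, and $(L\oplus sL)^\perp = L^\perp \cap (sL)^\perp = W$, so the three projections coincide with those in the original definition.

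For the converse, given a rank-$1$ projection $\pi$ with the stated properties, set $L := \Im(\pi)$ and $K := \Im(\pi^*)$; these are isotropic in general position by hypothesis. Since $\pi^2 = \pi$ implies $(\pi^*)^2 = \pi^*$, both are projections, and a short argument ($\pi^* w = 0 \iff \langle \pi v, w\rangle = 0 \text{ for all } v \iff w \in L^\perp$) shows $\ker \pi^* = L^\perp$ and similarly $\ker \pi = K^\perp$. Hence $\pi = \pi_{L,K^\perp}$ and $\pi^* = \pi_{K,L^\perp}$, and $\Id - \pi - \pi^*$ is the projection onto $W = L^\perp \cap K^\perp$ along $W^\perp = L \oplus K$, completing the reconstruction.
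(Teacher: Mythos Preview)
Your proof is correct, and the overall architecture matches the paper's: both arguments hinge on the single key identity $\pi_{L,K^\perp}^{\,*}=\pi_{K,L^\perp}$ (whence $\pi_{W,W^\perp}$ is self-adjoint), after which the reality condition and the identification with $q_{\alpha,\beta,L}$ for $K=sL$ are immediate.

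The difference is in how that adjoint identity is obtained. The paper writes down the explicit rank-one matrix formula $\pi_{L,K^\perp}=\dfrac{L\bar{K}^t s}{\bar{K}^t s L}$ (with $L,K$ standing for basis column vectors) and computes $A^*=s\bar{A}^t s$ directly. You instead give a coordinate-free argument: expand $\langle \pi_{L,K^\perp}v,w\rangle$ in the decomposition $v=v_L+v_K+v_W$ and use isotropy of $L,K$ together with $W\perp L,K$ to kill every cross term except $\langle v_L,w_K\rangle$. Your route is cleaner conceptually and also fills in the decomposition, the three subspace equalities, and the converse direction, all of which the paper leaves implicit. The trade-off is that the paper's explicit matrix formula is precisely what gets reused in the subsequent computational lemmas (e.g.\ the formulas $\pi_{L,K^\perp}-\pi_{L,sL^\perp}=\pi_{L,K^\perp}\pi_{sL,L^\perp}$), so it has to appear somewhere regardless.
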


\begin{proof}
Direct computations based on the formula $\pi_{L,K^\perp} = \frac{L\bar{K}^t s}{\bar{K}^t sL} $, where we denote by $L$ and $K$ also the basis column vector of the respective line. It is easy to check that $\pi_{L,K^\perp}^*$ projects onto $K$ along $L^\perp$, i.e., $\pi_{L,K^\perp}^*=\pi_{K,L^\perp}$. It follows from $\pi_{L,K^\perp}\pi_{L,K^\perp}^*=\pi_{L,K^\perp}^* \pi_{L,K^\perp}=0$ that $I-\pi_{L,K^\perp}-\pi_{L,K^\perp}^*$ is also a projection. In particular, when $K=sL$, the above loop $g_{\a,\beta, L, K, W}$ is identical to  $q_{\alpha,\beta,L}$ in Table \ref{tab:upq:p=q} or \ref{tab:upq:pneqq}. 
\end{proof}
The conditions for the above construction are all naturally $\Ad\, \Un_{p,q}$-invariant. We have noted recently that a sim lar construction was given by Terng-Wu \cite{Ter16} in the case of $O_{p,q}$, and we will present the corresponding projective generators in a subsequent work.

When no confusion arises, we denote the three projections simply by $\pi_{L},\pi_{K}$ and $\pi_{W}$. We will also use the notation
\[
g_{\a,\beta, \pi}:=\frac{\l-\a}{\l-\beta} \, \pi+ \frac{\l-\bar\beta}{\l-\bar\a} \, \pi^* + (I - \pi - \pi^*) = I + \frac{\beta-\alpha}{\lambda-\beta} \, \pi + \frac{\bar\a-\bar\beta}{\l-\bar\a} \, \pi^*.
\]

It will turn out that the projective loops $g_{\a,\beta,  L, K, W}$ and $p_{\al,L}$ together generate the whole group $\lupq$, similar to $\lglc$ and $\lglr$, see Theorem \ref{thm:fac4} below. We summarize them in a table:

\begin{table}[h]
\caption{Projective generators for $\lupq$}
\begin{tabular}{|c|c|c|}
\hline
Name & Definition & Conditions\\
\hline
$p_{\al,L}$ & $\left(\frac{\la-\al}{\la-\bar{\al}}\right)\pi_L+\pi_{L^\perp}$ & \begin{tabular}{c} $\C^n=L\oplus L^\perp,\, L\cap L^\perp=0$,\\ $L$ non-isotropic complex line\end{tabular}\\
\hline
$g_{\alpha,\beta,  L, K, W}$ & \begin{tabular}{c} $ \left(\frac{\l-\a}{\l-\beta}\right)\pi_{L}+\left(\frac{\l-\bar\beta}{\l-\bar\a}\right)\pi_{K} + \pi_{W} $ \end{tabular} & \begin{tabular}{c}   
$\C^n = L \oplus K \oplus W$, \; $W:=L^\perp \cap K^\perp$, \\ $L,K$ isotropic complex lines,  $\langle L, K\rangle\neq0$ \end{tabular}\\
\hline
\end{tabular}
\label{tab:upq2}
\end{table}

To prove this generating theorem, it is, by Theorem \ref{Upq:ge}, enough to show that the nilpotent loops $m_{\a,k,N}$, $n_{\a,k,N}$ in Tables \ref{tab:upq:p=q} and \ref{tab:upq:pneqq} can be factored as the product of some $g_{\alpha,\beta,L,K,W}$. We need a matrix characterization of $N$ in $m_{\a,k,N}$.

\begin{lem} \label{nil}
Assume that $N^{*}=-N$, $N^2=0$, and that $\rank(N)=1$. Let $L$ denote $\im N$ (as well as its basis column vector). Then $N=\ii r L \bar L^t s$ for some $r \in\R\setminus\{0\}$.
\end{lem}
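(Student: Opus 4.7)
The plan is a direct parametrization of rank-one matrices, followed by imposing the two remaining hypotheses. Since $\rank N = 1$ and $\im N = \C L$, I will first write $N = L \bar{w}^t$ for some nonzero column vector $w$ (with $L$ a chosen basis vector of $\im N$). Using the formula $A^* = s\bar{A}^t s$ from the preliminaries and the identities $s^2 = I$, $s^t = s$, a short computation gives
\[
N^* \;=\; s\, w\, \bar{L}^t\, s,
\]
whose image is the line spanned by $sw$.

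The hypothesis $N^* = -N$ then forces $\im N^* = \im N$, i.e. $sw$ is a scalar multiple of $L$. Writing $sw = \lambda L$, equivalently $w = \lambda s L$, and substituting back yields the Ansatz
\[
N \;=\; \bar\lambda\, L \,\bar{L}^t\, s.
\]

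It remains to pin down $\lambda$. Recomputing $N^*$ for this concrete form gives $N^* = \lambda\, L \bar{L}^t s$. Comparing with $-N = -\bar\lambda\, L \bar{L}^t s$ and observing that $L \bar{L}^t s$ is a nonzero matrix (its $(i,j)$-entry equals $L_i \overline{L_j} s_{jj}$, which is not identically zero for $L \neq 0$), we deduce $\lambda = -\bar\lambda$. Hence $\lambda = \ii r'$ is purely imaginary; setting $r := -r'$ produces the desired expression $N = \ii r L \bar{L}^t s$, with $r \neq 0$ because $N \neq 0$.

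I do not anticipate a genuine obstacle: the argument is essentially careful bookkeeping for rank-one matrices against the indefinite adjoint $*$. It is worth noting that $N^2 = 0$ plays no role in the derivation; it is automatically consistent with the final formula because $L$ is isotropic by Lemma \ref{lem:Nexist}, so $\bar{L}^t s L = \langle L, L\rangle = 0$ and hence $N^2 = -r^2\, L (\bar{L}^t s L) \bar{L}^t s = 0$.
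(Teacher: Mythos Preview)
Your proof is correct and follows essentially the same idea as the paper's: the paper argues that the conditions $\im N = L$ and $N^* = -N$ determine $N$ up to a real scalar and then simply verifies that $\ii L \bar{L}^t s$ satisfies them, whereas you make that uniqueness claim explicit by parametrizing rank-one matrices as $L\bar{w}^t$ and solving for $w$. Your observation that $N^2=0$ is not used in deriving the formula (and is recovered a posteriori from the isotropy of $L$) is accurate and a nice addendum.
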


\begin{proof}
The map $N$ is, up to a real scalar, uniquely determined by the conditions $\im N = L$ and $N^*=-N$. The claim follows because the linear map $\ii L\bar L^t s$ satisfies the same conditions.
\end{proof}

To show the difference between the  general projection $\pi_{L, K^\perp}$ and the Hermitian orthogonal projection $\pi_{L, sL^\perp}$, we choose $K$ to be a simple geometric perturbation of $L$ and obtain the following interesting formulas.

\begin{lem} \label{difference}
Choose a basis of any isotropic complex line $L:=\bpm L_1\\ L_2\epm$ (column vector), satisfying that $L_1$ and $L_2$ are $p$- and $q$-dimensional unit vectors (standard Hermitian norm), respectively. Set $K:=\bpm L_1\\ e^{\ii \theta }L_2\epm$ for $\theta\in \R\setminus \{k\pi\}_{k\in \Z}$. Then $L$ and $K$ are two isotropic lines in general position: $\langle L,K\rangle\neq 0$, and
\[
 \pi_{L, K^\perp} - \pi_{L, sL^\perp} =  \pi_{L,K^\perp}  \cdot  \pi_{sL,L^\perp} = \frac{(1+e^{\ii \theta})}{2(1-e^{\ii \theta})}L\bar {L}^t s  \;  ,
\]
which are all anti-self-adjoint and $2$-step nilpotent.
\end{lem}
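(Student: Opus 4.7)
The plan is a direct matrix computation organized around the block decomposition $\C^n = \C^p \oplus \C^q$ induced by $s = \diag(-I_p, I_q)$. First I would verify the two preliminary facts: isotropy of $L$ is equivalent to $|L_1| = |L_2|$, which is automatic from the unit-vector hypothesis, and this forces $K$ to be isotropic as well; then $\bar K^t s L = e^{-i\theta} - 1 \ne 0$ (since $\theta \notin 2\pi\Z$, which is implied by the hypothesis $\theta \notin \pi\Z$), so $L$ and $K$ are in general position. This makes the formulas $\pi_{L,K^\perp} = L \bar K^t s / (\bar K^t s L)$ from Proposition \ref{prop:proj2} and $\pi_{L,sL^\perp} = L \bar L^t / (\bar L^t L) = \tfrac{1}{2} L \bar L^t$ (using $s^2 = I$ and $\bar L^t L = 2$) well-defined.

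For the first equality I would combine the two rank-$1$ matrices over a common denominator,
\[
\pi_{L,K^\perp} - \pi_{L,sL^\perp} \;=\; \frac{L\bigl(\,2\,\bar K^t s - (e^{-i\theta}-1)\bar L^t\,\bigr)}{2(e^{-i\theta}-1)},
\]
and simplify the row vector in the numerator using $|L_1| = |L_2| = 1$: it collapses to $(1+e^{-i\theta})\,\bar L^t s$. Multiplying numerator and denominator by $e^{i\theta}$ then yields the stated coefficient $\tfrac{1+e^{i\theta}}{2(1-e^{i\theta})}$. For the second equality, $\pi_{sL,L^\perp} = \tfrac{1}{2} sL \bar L^t s$ by the same formula, and in the product $\pi_{L,K^\perp}\pi_{sL,L^\perp}$ the inner scalar $\bar K^t s \cdot sL = \bar K^t L = 1 + e^{-i\theta}$ collapses the middle and reproduces the same expression.

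Finally, the two structural properties come for free from isotropy and $s^2 = I$: $(L\bar L^t s)^2 = L(\bar L^t s L)\bar L^t s = 0$ gives two-step nilpotency, while $(L\bar L^t s)^* = s \cdot s L\bar L^t \cdot s = L\bar L^t s$ shows self-adjointness; combined with the fact that $\tfrac{1+e^{i\theta}}{2(1-e^{i\theta})} = \tfrac{i\sin\theta}{2(1-\cos\theta)}$ is purely imaginary, the resulting product is anti-self-adjoint, matching the canonical form of a rank-$1$ anti-self-adjoint two-step nilpotent given in Lemma \ref{nil}. There is no real obstacle here — the whole argument is a sequence of short block manipulations; the only thing requiring care is the bookkeeping of $s$ when switching between the $\langle\cdot,\cdot\rangle$-adjoint $A^* = s\bar A^t s$ and the standard transpose.
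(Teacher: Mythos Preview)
Your proposal is correct and follows essentially the same approach as the paper: both arguments are direct computations based on the rank-one formula $\pi_{L,K^\perp}=\dfrac{L\bar K^t s}{\bar K^t s L}$, with the paper writing out the full $2\times 2$ block matrices explicitly while you keep the rank-one structure factored and simplify the row vector instead. Your final remark that the scalar is purely imaginary is exactly the paper's observation that the M\"obius map $z\mapsto \tfrac{1+z}{1-z}$ sends $S^1\setminus\{\pm 1\}$ onto $\ii\R\setminus\{0\}$, which is what ties the result to Lemma~\ref{nil}.
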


\begin{proof}
Direct computations based on the formula $\pi_{L,K^\perp} = \frac{L\bar{K}^t s}{\bar{K}^t sL} $\,:
\begin{eqnarray*}
\pi_{L, K^\perp}-\pi_{L, sL^\perp} &=& \frac{L\bar K^t s}{\bar K^t s L} - \frac{L\bar L^t}{\bar L^t L}\\
            &=&\frac{\bpm L_1\\L_2\epm \bpm -\bar{L_1}^t & e^{-\ii\theta}\bar{ L_2}^t\epm}{-1+e^{-\ii\theta}}-\frac{\bpm L_1 \\ L_2\epm\bpm \bar{L_1}^t & \bar{L_2}^t\epm}{2}\\
            &=&\frac{1}{2(-1+e^{-\ii\theta})}\bpm (-1-e^{-\ii\theta})L_1\bar{L_1}^t & (1+e^{-\ii\theta}) L_1\bar{ L_2}^t \\(-1-e^{-\ii\theta})L_2\bar{L_1}^t &(1+e^{-\ii\theta})L_2\bar{ L_2}^t\epm\\
            &=&\frac{(1+e^{\ii \theta})}{2(1-e^{\ii \theta})} L\bar {L}^t s.
\end{eqnarray*}

\begin{eqnarray*}
\pi_{L,K^\perp}\pi_{sL,L^\perp} &=& \frac{L\bar K^t s}{\bar K^t s L} \cdot \frac{s L\bar L^t s}{\bar L^t s s L}\\
            &=&\frac{\bpm L_1\\L_2\epm \bpm -\bar{L_1}^t & e^{-\ii\theta}\bar{ L_2}^t\epm}{-1+e^{-\ii\theta}} \cdot \frac{\bpm -L_1 \\ L_2\epm\bpm - \bar{L_1}^t & \bar{L_2}^t\epm}{2}\\
            &=&\frac{(1+e^{-\ii \theta})}{2(-1+e^{-\ii\theta})}\bpm -L_1\bar{L_1}^t &  L_1\bar{ L_2}^t \\- L_2\bar{L_1}^t & L_2\bar{ L_2}^t\epm\\
            &=&\frac{(1+e^{\ii \theta})}{2(1-e^{\ii \theta})} L\bar {L}^t s.
\end{eqnarray*}
We observe that $N$ in Lemma \ref{nil} has exactly the same form as the above difference or product, since the M\"{o}bius transformation $\frac{1+z}{1-z}$ maps $S^{1}\setminus\{1,-1\}$ onto $\ii\R\setminus\{0\}$. So they are all anti-self-adjoint and $2$-step nilpotent.

\end{proof}

To make the above formulas more clear, let $\pi:=\pi_{L,K^\perp}$ and $\tilde{\pi}:=\pi_{L,sL^\perp}$. The following basic formulas are immediate from the definition of projections:
\[
\pi \cdot \tilde{\pi} = \tilde{\pi} , \quad
 \tilde{\pi} \cdot \pi = \pi , \quad
\pi^\ast \cdot \tilde{\pi} = 0 \; .
\]
The above Lemma \ref{difference}  however gives less trivial formulas:
\[
\pi - \tilde{\pi} \, = \, \pi \cdot \tilde{\pi}^\ast \, = \, N \; ,
\]
where \, $N:=\frac{(1+e^{\ii \theta})}{2(1-e^{\ii \theta})} L\bar {L}^t s $ \, is anti-self-adjoint and $2$-step nilpotent. One geometric meaning of the first equality is that $ \Im (I-\tilde{\pi}-\tilde{\pi}^*)$ or $L^\perp \cap sL^\perp$ is inside $\ker \pi$, and is actually equal to $W$.

In fact, one hint for our factorization of nilpotent loops was this factorization of $N$ as the product of some projections. Hawkins-Kammerer proved in \cite{haw68} that any singular finite-dimensional linear operator can be factored as the product of some projections.

Recall that the original $q_{\alpha,\beta,L}$ is identical to $g_{\a,\beta, L,sL, L^\perp \cap sL^\perp }$, and can be written using $\tilde{\pi}=\pi_{L,sL^\perp}$ as:
\[
q_{\alpha,\beta,L} = g_{ \a,\beta, \tilde{\pi} } = I + \frac{\beta-\alpha}{\lambda-\beta} \, \tilde{\pi} + \frac{\bar\a-\bar\beta}{\l-\bar\a} \, \tilde{\pi}^* \; .
\]

\begin{prop}\label{prop:facm}
 The nilpotent loop $m_{\a,k,N}$, $k=1,2$ defined in Tables \ref{tab:upq:p=q} and \ref{tab:upq:pneqq} can be factored as the product of some $g_{\a,\beta, L, K, W}$.
\end{prop}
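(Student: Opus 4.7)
The plan is to exploit Lemma \ref{difference} to realize any admissible $N$ as a difference of projections, then factor $m_{\alpha,k,N}$ through products of the $g$-loops $q_{\beta,\alpha,L}$ (which equals $g_{\beta,\alpha,L,sL,L^\perp\cap(sL)^\perp}$) and $g_{\alpha,\beta,L,K_\theta,W}$ for an auxiliary $K_\theta:=(L_1,e^{i\theta}L_2)^t$. By Lemma \ref{nil}, $N=irL\bar L^ts$ for some isotropic $L$ and $r\in\R\setminus\{0\}$, and by Lemma \ref{difference} the choice of $K_\theta$ with $\pi:=\pi_{L,K_\theta^\perp}$ and $\tilde\pi:=\pi_{L,(sL)^\perp}$ yields $N=\pi-\tilde\pi=c_\theta L\bar L^ts$, with $c_\theta=(1+e^{i\theta})/(2(1-e^{i\theta}))$ sweeping $i\R\setminus\{0\}$; so $\theta$ can always be tuned to match the prescribed purely imaginary coefficient.

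The key step I would prove first is the product formula
\[
q_{\beta,\alpha,L}(\lambda)\cdot g_{\alpha,\beta,L,K_\theta,W}(\lambda)\;=\;I-\frac{2(\alpha-\Re\beta)(\lambda-\alpha)+|\alpha-\beta|^2}{(\lambda-\alpha)^2}\,N, \qquad \alpha\in\R,
\]
where $W=L^\perp\cap K_\theta^\perp$. Its clean form hinges on the geometric identity $L+K_\theta=L+sL$, immediate from the shape of $K_\theta$, which forces $W$ to coincide with $L^\perp\cap(sL)^\perp$. Hence the same ``$W$-projection'' $P=\pi_W$ sits in both factors, kills and is killed by each of $\pi,\pi^*,\tilde\pi,\tilde\pi^*$ on both sides, and the complementarity $\pi+\pi^*=\tilde\pi+\tilde\pi^*=I-P$ yields $\tilde\pi^*=\pi^*+N$; the bilinear relations $\tilde\pi\pi=\pi$, $\tilde\pi^*\pi=0$, $\tilde\pi\pi^*=-N$, $\tilde\pi^*\pi^*=\tilde\pi^*$ then fall out, and with $f_+h_+=f_-h_-=1$ (where $f_+=(\lambda-\beta)/(\lambda-\alpha)$, $h_-=(\lambda-\bar\beta)/(\lambda-\alpha)$) the expansion collapses to $I+(1-f_+h_-)N$.

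With this formula in hand, the case $k=2$ is one line: choose $\Re\beta=\alpha$ so the simple-pole term vanishes, then tune $\theta$ so that $-|\Im\beta|^2 N$ equals the prescribed $N_0$. For $k=1$, the plan is to multiply two such products $P(\beta_1,\theta_1)P(\beta_2,\theta_2)$ with $\theta_2=-\theta_1$, which gives $N_{\theta_2}=-N_{\theta_1}$ and hence $N_{\theta_1}N_{\theta_2}=0$; imposing $|\alpha-\beta_1|=|\alpha-\beta_2|$ cancels the $(\lambda-\alpha)^{-2}$ terms, after which $\Re\beta_1-\Re\beta_2$ is tuned freely to match the required residue (concretely, $\beta_j=\alpha\pm r_0/4+ib$ paired with a $\theta_1$ giving $c_{\theta_1}=i$ works).

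The main obstacle is the algebraic bookkeeping behind the product formula: it pivots entirely on the non-obvious geometric coincidence $L+K_\theta=L+sL$, without which the two $W$-projections differ, spurious terms in $sL$ or $K_\theta$ survive the expansion, and the collapse to $I+(1-f_+h_-)N$ breaks. Once that identity and the derived bilinear relations are in place, the $k=2$ case is a single parameter adjustment and the $k=1$ case is an elementary two-parameter interpolation among the $\beta_i$.
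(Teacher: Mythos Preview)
Your proposal is correct and follows essentially the same route as the paper: both compute the product of a generalized $g$-loop (built from $\pi=\pi_{L,K_\theta^\perp}$) with the standard $q$-loop (built from $\tilde\pi=\pi_{L,(sL)^\perp}$), using Lemma~\ref{difference} together with the coincidence $L\oplus K_\theta=L\oplus sL$ to collapse the product to $I+\dfrac{2\alpha-\beta-\bar\beta}{\lambda-\alpha}N+\dfrac{|\alpha-\beta|^2}{(\lambda-\alpha)^2}N$ (up to a global sign, since you multiply in the opposite order). The only cosmetic difference is in the $k=1$ step: the paper factors this expression as $m_{\alpha,1,cN}\cdot m_{\alpha,2,c'N}$ via $N^2=0$ and invokes the $k=2$ case, whereas you multiply two such products with $\theta_2=-\theta_1$ and $|\alpha-\beta_1|=|\alpha-\beta_2|$ to cancel the second-order pole directly---either way yields four projective factors.
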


\begin{proof}
For a nilpotent loop $m_{\a,k,N}$ as defined in Tables \ref{tab:upq:p=q} and \ref{tab:upq:pneqq}, we apply Lemma \ref{difference} to the isotropic complex line $\im N$. That is, we choose a basis $L:=\bpm L_1\\ L_2\epm$ of $\im N$, where $L_1$ and $L_2$ are $p$- and $q$-dimensional unit vectors, and, for $\theta\in \R\setminus \{k\pi\}_{k\in \Z}$, define another isotropic line by $K:=\bpm L_1\\ e^{\ii \theta }L_2\epm$. Then, $\langle L,K\rangle\neq 0$ and
\[
\pi - \tilde{\pi} \, = \, \pi \cdot \tilde{\pi}^\ast \, = \, N,
\]
where $N=\frac{(1+e^{\ii \theta})}{2(1-e^{\ii \theta})} L\bar {L}^t s $ is anti-self-adjoint and $2$-step nilpotent. Recall that $m_{\a,k,N}$ has a unique real singularity $\a\in\R$. For any $\beta\in\C$ we compute using the above formulas together with the basic formulas \,
$\pi \cdot \tilde{\pi} = \tilde{\pi} ,  \;
 \tilde{\pi} \cdot \pi = \pi ,  \;
\pi^\ast \cdot \tilde{\pi} = 0$:

\begin{eqnarray*}
&    &  g_{\beta,\a,L,K,W} \cdot g_{\a,\beta, L, sL, L^\perp \cap sL^\perp } \\
& = &  ( I + \frac{\alpha-\beta}{\lambda-\alpha} \, \pi + \frac{\bar\beta-\alpha}{\l-\bar\beta} \, \pi^*) \cdot ( I + \frac{\beta-\alpha}{\lambda-\beta} \, \tilde{\pi} + \frac{\a-\bar\beta}{\l-\a} \, \tilde{\pi}^*) \\
& = &  I + \left[ \frac{\beta-\alpha}{\lambda-\beta}- \frac{(\beta-\alpha)^2}{(\lambda-\alpha)(\lambda-\beta)} \right] \tilde{\pi} + \frac{\alpha-\beta}{\lambda-\alpha}  \pi  \\
&    & + \left[ \frac{\bar\beta-\alpha}{\lambda-\bar\beta}- \frac{(\bar\beta-\alpha)^2}{(\lambda-\alpha)(\lambda-\bar\beta)} \right] \pi^*   + \frac{\alpha-\bar\beta}{\lambda-\alpha}  \tilde{\pi}^* + \frac{|\alpha-\beta|^2}{(\lambda-\alpha)^2} \pi  \tilde{\pi}^\ast \\
&  =  & I + \frac{\alpha-\beta}{\lambda-\alpha}  (\pi - \tilde{\pi} ) + \frac{\alpha-\bar\beta}{\lambda-\alpha}  (\tilde{\pi}^* - \pi^* ) + \frac{|\alpha-\beta|^2}{(\lambda-\alpha)^2} \pi  \tilde{\pi}^\ast \\
&  = & I+\frac{2\alpha-\beta-\bar \beta}{\lambda-\alpha}N+\frac{|\alpha-\beta|^2}{(\lambda-\alpha)^2} N\\
&  =  & (I + \frac{2\alpha-\beta-\bar \beta}{\lambda-\alpha} N) (I + \frac{|\alpha-\beta|^2}{(\lambda-\alpha)^2} N).
\end{eqnarray*}

Choosing  $\beta= \a + \ii$, we have $2\alpha-\beta-\bar\beta = 0$, hence it follows that $m_{\a,2,N}$ can be factored as the product of two loops of type $g$.

Choosing $\beta$ such that $2\a-\beta-\bar\beta=1$, we then see that $m_{\a,1,N}$ can be factored as the product of two projective loops and one loop of type $m$, or four projective loops. 
\end{proof}

For the factorization of $n_{\a,1,N}$ in Table \ref{tab:upq:pneqq}, we need another computational lemma. \\

\begin{lem}\label{fornupn2}
Assume that $p\neq q$,  and let $V$ be a maximal isotropic subspace with respect to $\langle \cdot, \cdot \rangle$ of signature $(p,q)$. For nonzero vectors $v \in V$ and $w \in (V\oplus sV)^\perp$, define $M$ by $M(w)=-2v$, $M(w^\perp)=0$ and define $N:=M-M^*$.  Let $L$ denote the line spanned by $v$. Let $K$ denote the line spanned by $v+cw- \mathrm{sgn}(\langle w, w\rangle) s v $  where $c=\sqrt{\frac{2\, \mathrm{sgn}(\langle w, w\rangle)\langle s v,v \rangle}{\langle w, w\rangle}} > 0$. Then
\begin{enumerate}
  \item $L,K$ are two isotropic lines in general position: $\langle L,K\rangle\neq 0$; 
  \item $\pi_{L,K^\perp}\pi_{sL,L^\perp}$ is self-adjoint; 
  \item $M=(\a-\beta)\pi_{L, K^\perp}\pi_{L^\perp\cap sL^\perp, L\oplus sL}$, where $\beta= \a - \frac{2\,\mathrm{sgn}(\langle w, w\rangle) \cdot \langle sv, v\rangle}{c \langle w,w\rangle}$; 
  \item $N^2=2|\a-\beta|^2 \pi_{L,K^\perp}  \cdot  \pi_{sL,L^\perp} $. 
\end{enumerate}
\end{lem}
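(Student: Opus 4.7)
My plan is to verify the four claims by direct computation, exploiting the explicit formula $\pi_{L,K^\perp}=\frac{L\bar{K}^t s}{\bar{K}^t sL}$ for rank-one projections and the standing orthogonality relations forced by $v\in V$ and $w\in(V\oplus sV)^\perp$: namely $\langle v,v\rangle=\langle sv,sv\rangle=\langle v,w\rangle=\langle w,v\rangle=\langle sv,w\rangle=\langle w,sv\rangle=0$ while $\langle v,sv\rangle=\langle sv,v\rangle=|v|^2>0$; consequently $\bar{w}^t v=0$ and $\bar{v}^t v=|v|^2$. For (1), expanding $\langle K,K\rangle$ term by term, all cross terms between $w$ and $v$ or $sv$ vanish by orthogonality and the diagonals in $v,sv$ vanish by isotropy, leaving $-2\sigma|v|^2+c^2\langle w,w\rangle$, which is exactly zero by the defining formula $c^2=\tfrac{2\sigma\langle sv,v\rangle}{\langle w,w\rangle}$; moreover $\bar{v}^t sK=-\sigma|v|^2\neq 0$ immediately gives $\langle L,K\rangle\neq 0$.

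For (2), I would substitute $\pi_{L,K^\perp}=\frac{v\bar{K}^t s}{\bar{K}^t sv}$ and $\pi_{sL,L^\perp}=\frac{sv\bar{v}^t s}{\bar{v}^t v}$ into their composition. Using $\bar{K}^t sv=-\sigma|v|^2$ together with the short computation $\bar{K}^t s\cdot sv=\bar{K}^t v=|v|^2$, the product collapses to $\frac{-\sigma}{|v|^2}L\bar{L}^t s$. A direct check using $A^*=s\bar{A}^t s$ and $s^2=I$ shows $(L\bar{L}^t s)^*=L\bar{L}^t s$, so the composition is self-adjoint as claimed.

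For (3), both sides are rank-one operators with image in $L$, so it suffices to match their actions on $V\oplus sV$ and on $w$. For any $v'\in V$, the decomposition $v'=v'_1+v'_2$ with $v'_1\in L^\perp\cap sL^\perp$ and $v'_2\in L\oplus sL$ yields $v'_1=v'-\tfrac{\bar{v}^t v'}{|v|^2}v$, and the cancellation $\bar{K}^t sv'_1=-\sigma\bar{v}^t v'+\sigma\bar{v}^t v'=0$ gives $\pi_{L,K^\perp}(v'_1)=0$; a symmetric computation handles $sV$. For $w\in(L\oplus sL)^\perp$, the first projection acts as the identity, and a short computation gives $\pi_{L,K^\perp}(w)=\tfrac{c\langle w,w\rangle}{-\sigma|v|^2}v$; equating $(\alpha-\beta)$ times this with $-2v=M(w)$ forces $\alpha-\beta=\tfrac{2\sigma\langle sv,v\rangle}{c\langle w,w\rangle}$, the stated value.

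For (4), observe first that $M^2=0$ since $\Im M\subset L\subset V\subset\ker M$, $(M^*)^2=0$ symmetrically, and $M^*M=0$ since $\Im M\subset V\subset\ker M^*$; hence $N^2=-MM^*$. Writing $M(y)=-2\tfrac{\langle w,y\rangle}{\langle w,w\rangle}v$ and $M^*(y)=-2\tfrac{\langle v,y\rangle}{\langle w,w\rangle}w$, composition yields $MM^*(y)=\tfrac{4\langle v,y\rangle}{\langle w,w\rangle}v=\tfrac{4}{\langle w,w\rangle}L\bar{L}^t s\cdot y$. Combined with (2), the desired identity reduces to $|\alpha-\beta|^2=\tfrac{2\sigma|v|^2}{\langle w,w\rangle}$, which is immediate from (3) and the formula for $c^2$. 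The main obstacle will be the careful bookkeeping in (3): the cancellation $\bar{K}^t sv'=-\sigma\bar{v}^t v'$ for $v'\in V$ (and its counterpart for $sV$) depends on cleanly separating the indefinite inner product $\langle\cdot,\cdot\rangle$ from the standard Hermitian inner product via the operator $s$.
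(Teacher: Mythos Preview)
Your approach is essentially the same as the paper's: direct computation with the rank-one projection formula $\pi_{L,K^\perp}=\frac{L\bar K^t s}{\bar K^t sL}$, combined with the orthogonality relations among $v$, $sv$, $w$. Parts (1), (2), and (4) are correct and closely parallel the paper; the only difference worth noting is that the paper proves (4) by substituting the projection formula for $M$ obtained in (3) and expanding $MM^*$ in terms of $\pi,\tilde\pi$, whereas you compute $MM^*$ directly from the defining formula for $M$. Both routes are valid.

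There is, however, a small but genuine gap in your argument for (3). You assert that since both sides are rank-one with image in $L$, it suffices to check them on $V\oplus sV$ and on $w$. But $V\oplus sV$ has dimension $2p$, and together with $\C w$ spans only a $(2p+1)$-dimensional subspace; when $q-p\geq 2$ this is strictly smaller than $\C^n$, so the verification is incomplete. You must also show the right-hand side vanishes on $w^\perp\cap(V\oplus sV)^\perp$. This is easy: any such $u_1$ already lies in $(L\oplus sL)^\perp=L^\perp\cap sL^\perp$, so $\pi_{L^\perp\cap sL^\perp,L\oplus sL}$ fixes it, and then
\[
\langle K,u_1\rangle=\langle v,u_1\rangle+c\langle w,u_1\rangle-\sigma\langle sv,u_1\rangle=0,
\]
since $u_1\perp V\oplus sV$ and $u_1\perp w$, so $\pi_{L,K^\perp}(u_1)=0$. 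The paper sidesteps this slip by checking directly on $w$ and on all of $w^\perp$, decomposing a general $u\in w^\perp$ via $\C^n=L\oplus sL\oplus(L\oplus sL)^\perp$ rather than via $V\oplus sV\oplus(V\oplus sV)^\perp$.
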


\begin{proof}
Firstly $\langle w,w\rangle \neq 0$. Otherwise $V\oplus \C w$ would be a bigger isotropic subspace, contradicting to the condition that $V$ is maximal isotropic. Recall that $\mathrm{sgn}(\langle w, w\rangle)=1$, if $\langle w, w\rangle>0$, and $\mathrm{sgn}(\langle w, w\rangle)=-1$, if $\langle w, w\rangle<0$; and $\langle sv, v\rangle > 0$. For convenience, denote $v$ also by $L$, and denote the basis column vector of $K$, i.e. $v+cw - \mathrm{sgn}(\langle w, w\rangle) s v$, also by $K$. Then 
\[
\langle L, K \rangle = - \, \mathrm{sgn}(\langle w, w\rangle) \langle sv, v\rangle \in \R \setminus \{0\} 
\]
and 
\[
\langle K, K \rangle =c^2 \langle w, w\rangle - 2 \, \mathrm{sgn}(\langle w, w\rangle) \langle sv, v\rangle = 0 
\]
imply the first claim. 

For the second claim, we compute $\pi_{L,K^\perp}\pi_{sL,L^\perp}=\frac{\bar K^t L}{\bar K^t sL}\frac{L\bar L^t s}{\bar L^t L}$, which is then self-adjoint if and only if $\frac{\bar K^t L}{\bar K^t sL}\in\R$. But $\frac{\bar K^t L}{\bar K^t sL}=-\, \mathrm{sgn}(\langle w, w\rangle)$, and the claim follows.

For the third claim, we first compute
\[
(\a-\beta)\pi_{L, K^\perp}\pi_{L^\perp\cap sL^\perp, L\oplus sL} ~ w=(\a-\beta)\pi_{L, K^\perp}w=\frac{c (\a-\beta) \langle w,w\rangle}{-\, \mathrm{sgn}(\langle w, w\rangle) \langle s v, v\rangle}v = -2v.
\]
For any $u\in w^\perp$, decompose $u=x v+y s v+ z u_1$ by $\C^n=L\oplus sL\oplus (L\oplus sL)^\perp$, where $u_1\in (L\oplus sL)^\perp$, $x,y,z\in\C$. $\langle u, w\rangle=0$ implies $\langle u_1,w\rangle=0$. Then the third claim follows by the following computation:  
\begin{eqnarray*}
&&(\a-\beta)\pi_{L, K^\perp}\pi_{L^\perp\cap sL^\perp, L\oplus sL} ~ u=(\a-\beta)\pi_{L, K^\perp}u_1\\
&&=(\a-\beta)\frac{\langle v,u_1\rangle+\langle c w,u_1\rangle+\langle -\, \mathrm{sgn}(\langle w, w\rangle)s v, u_1\rangle}{\langle -\, \mathrm{sgn}(\langle w, w\rangle)s v, v\rangle}v=0. 
\end{eqnarray*}

For the last claim, we first show  $M^2=(M^\ast)^2=M^\ast M=0$. For convenience, let $\pi :=\pi_{L,K^\perp}$ and $\tilde \pi :=\pi_{L,sL^\perp}$. Then $M=(\a-\beta)\pi(I-\tilde\pi-\tilde\pi^{\ast})$, and 
\begin{eqnarray*}
M^2=(\a-\beta)^2\pi(I-\tilde\pi-\tilde\pi^{\ast})\pi(I-\tilde\pi-\tilde\pi^{\ast})=0, 
\end{eqnarray*}
by $(I-\tilde\pi-\tilde\pi^{\ast})\pi=0$; and
\begin{eqnarray*}
M^*M=|\a-\beta|^2(I-\tilde\pi-\tilde\pi^{\ast})\pi^{\ast}\pi(I-\tilde\pi-\tilde\pi^{\ast})=0
\end{eqnarray*}
by $\pi^{\ast}\pi=0$. Then
\begin{eqnarray*}
MM^{*}&=&|\a-\beta|^2\pi(I-\tilde\pi-\tilde\pi^{\ast})(I-\tilde\pi-\tilde\pi^{\ast})\pi^{\ast}\\
      &=& |\a-\beta|^2\pi(I-\tilde\pi-\tilde\pi^{\ast})\pi^{\ast}\\
      &=& |\a-\beta|^2(-\tilde\pi\pi^{\ast}-\tilde\pi\pi^{\ast}\pi^{\ast})\\
      &=& |\a-\beta|^2(-\tilde\pi\pi^{\ast}-\tilde\pi\pi^{\ast})\\
      &=&-2|\a-\beta|^2\pi\tilde\pi^{\ast}
\end{eqnarray*}
So $(M-M^\ast)^2=2|\a-\beta|^2\pi\tilde\pi^{\ast}$.

\end{proof}

\begin{prop}\label{prop:facn}
The nilpotent loop $n_{\a,1,N}$ defined in Table \ref{tab:upq:pneqq} can be factored as the product of some $g_{\a,\beta, L, K, W}$.
\end{prop}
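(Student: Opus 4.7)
The plan is to exhibit $n_{\a,1,N}$ as the product of exactly two projective loops of type $g$. The first step is to apply Lemma \ref{fornupn2} to the given $N = M - M^*$, which produces an isotropic line $L = \C v$, a companion isotropic line $K$ (built explicitly from $v$, $w$, and $sv$) in general position with $L$, and a real parameter $\beta$. Since $L$ and $sL$ are automatically a pair of isotropic lines in general position (as $\langle L, sL\rangle = (v,v) > 0$), both $g_{\beta,\a,L,K,W}$ and $g_{\a,\beta,L,sL,L^\perp \cap sL^\perp}$ are legitimate projective loops in $\lupq$, and the candidate identity to verify is
\[
n_{\a,1,N} \;=\; g_{\beta,\a,L,K,W}\cdot g_{\a,\beta,L,sL,L^\perp\cap sL^\perp}.
\]

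Setting $\pi := \pi_{L,K^\perp}$ and $\tilde\pi := \pi_{L,sL^\perp}$, I would then collect the projection identities that drive the calculation: the tautological $\pi\tilde\pi = \tilde\pi$ and $\tilde\pi\pi = \pi$ (both maps project onto the common image $L$); the annihilation identities $\pi\pi^* = \pi^*\pi = \tilde\pi\tilde\pi^* = \tilde\pi^*\tilde\pi = \pi^*\tilde\pi = \tilde\pi^*\pi = 0$, all of which follow directly from the isotropy of $L$, $K$, and $sL$; and the less obvious self-adjointness $\pi\tilde\pi^* = \tilde\pi\pi^*$ provided by Lemma \ref{fornupn2}(2). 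With these in hand, one expands the product and uses the partial-fraction identity $\frac{(\a-\beta)^2}{(\la-\a)(\la-\beta)} = \frac{\a-\beta}{\la-\a} - \frac{\a-\beta}{\la-\beta}$ exactly as in the proof of Proposition \ref{prop:facm}. The $(\la-\beta)$-pole then cancels completely, and the result collapses to
\[
I + \frac{\a-\beta}{\la-\a}\bigl[(\pi - \tilde\pi) - (\pi^* - \tilde\pi^*)\bigr] + \frac{(\a-\beta)^2}{(\la-\a)^2}\,\pi\tilde\pi^*.
\]

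The final step is to recognize the right-hand side as $n_{\a,1,N}$. The formula $M = (\a-\beta)\pi(I - \tilde\pi - \tilde\pi^*)$ in Lemma \ref{fornupn2}(3), combined with $\pi\tilde\pi = \tilde\pi$ and $\pi\tilde\pi^* = \tilde\pi\pi^*$, gives $M = (\a-\beta)(\pi - \tilde\pi - \pi\tilde\pi^*)$ and $M^* = (\a-\beta)(\pi^* - \tilde\pi^* - \pi\tilde\pi^*)$, so $N = M - M^* = (\a-\beta)[(\pi - \tilde\pi) - (\pi^* - \tilde\pi^*)]$, matching the $(\la-\a)^{-1}$ coefficient exactly. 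The $(\la-\a)^{-2}$ coefficient is handled directly by Lemma \ref{fornupn2}(4): since $\a,\beta\in\R$, $\tfrac{1}{2}N^2 = |\a-\beta|^2\pi\tilde\pi^* = (\a-\beta)^2\pi\tilde\pi^*$. The main obstacle is the computational bookkeeping of the eight cross-terms in the product expansion, and the key conceptual point is that the self-adjointness from Lemma \ref{fornupn2}(2) is precisely what kills the potentially extra $-\pi\tilde\pi^* + \tilde\pi\pi^*$ contribution in $N = M - M^*$, so that no auxiliary $m$-type loop is needed and two $g$-type projective loops already suffice.
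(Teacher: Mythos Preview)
Your proposal is correct and follows essentially the same approach as the paper: the same two projective factors $g_{\beta,\a,L,K,W}\cdot g_{\a,\beta,L,sL,L^\perp\cap sL^\perp}$, the same projection identities (including the self-adjointness of $\pi\tilde\pi^*$ from Lemma~\ref{fornupn2}(2)), and the same expansion reused from Proposition~\ref{prop:facm}. Your identification $N=(\a-\beta)[(\pi-\tilde\pi)-(\pi^*-\tilde\pi^*)]$ is just a slightly cleaner rewriting of the paper's splitting via $M=(\a-\beta)(\pi-\tilde\pi-\pi\tilde\pi^*)$, relying on $\beta\in\R$ and $(\pi\tilde\pi^*)^*=\pi\tilde\pi^*$ in exactly the same way.
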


\begin{proof}
By definition, $N = M - M^*$, where $M:(V\oplus sV)^\perp\to V$ is a rank $1$ linear map, and $V$ is maximal isotropic. Choosing a vector $v$ spanning $\im M$, and $w\in (V\oplus sV)^\perp$ such that $M(w) = -2v$ and $M(w^\perp)=0$, we are in the situation of Lemma \ref{fornupn2}. Recall $\pi :=\pi_{L,K^\perp}$ and $\tilde \pi :=\pi_{L,sL^\perp}$, and we   have 
\[
\pi\tilde\pi=\tilde\pi, ~~\tilde\pi\pi=\pi, ~~\pi^{\ast}\tilde\pi=\tilde\pi^{\ast}\pi=0, ~~(\pi\tilde\pi^{\ast})^{\ast}=\tilde\pi\pi^{\ast}=\pi\tilde\pi^{\ast}, 
\]
as well as $M=(\a-\beta)\pi(I-\tilde\pi-\tilde\pi^{\ast})$ and  $N^2=2|\a-\beta|^2\pi\tilde\pi^{\ast}$. 

Then $M^2=(M^\ast)^2=M^\ast M=0$ implies that $N^3 =-MM^\ast(M-M^\ast)=0$. The proof is complete by the following computation:  
\begin{eqnarray*}
& &g_{\beta,\a,L,K,W} \cdot g_{\a,\beta, L, sL, L^\perp \cap sL^\perp }\\
&=&( I + \frac{\alpha-\beta}{\lambda-\alpha} \, \pi + \frac{\bar\beta-\alpha}{\l-\bar\beta} \, \pi^\ast) \cdot ( I + \frac{\beta-\alpha}{\lambda-\beta} \, \tilde{\pi} + \frac{\a-\bar\beta}{\l-\a} \, \tilde{\pi}^\ast)\\
&=&I + \frac{\alpha-\beta}{\lambda-\alpha}  (\pi - \tilde{\pi} ) + \frac{\alpha-\bar\beta}{\lambda-\alpha}  (\tilde{\pi}^\ast - \pi^\ast ) + \frac{|\alpha-\beta|^2}{(\lambda-\alpha)^2} \pi  \tilde{\pi}^\ast\\
&=&I + \frac{\alpha-\beta}{\lambda-\alpha}  (\pi - \tilde{\pi} )-\frac{\alpha-\bar\beta}{\lambda-\alpha}(\pi - \tilde{\pi} )^{\ast}+\frac{|\alpha-\beta|^2}{(\lambda-\alpha)^2} \pi  \tilde{\pi}^\ast\\
&=&I+\frac{\a-\beta}{\l-\a}(\pi-\tilde\pi-\pi\tilde\pi^{\ast})+\frac{\a-\beta}{\l-\a}\pi\tilde\pi^{\ast}-\frac{\a-\bar\beta}{\l-\a}(\pi-\tilde\pi-\pi\tilde\pi^{\ast})^{\ast}\\
& &-\frac{\a-\bar\beta}{\l-\a}(\pi\tilde\pi^{\ast})^\ast+\frac{|\alpha-\beta|^2}{(\lambda-\alpha)^2} \pi  \tilde{\pi}^\ast\\
&=&I+\frac{M-M^\ast}{\l-\a}+\frac{|\alpha-\beta|^2}{(\lambda-\alpha)^2} \pi  \tilde{\pi}^\ast\\
&=&I+\frac{N}{\l-\a}+\frac{N^2}{2(\l-\a)^2}. 
\end{eqnarray*}
\end{proof}

Then by Proposition \ref{prop:facm}, \ref{prop:facn}, we obtain the main theorem for $\lupq$:

\begin{thm}\label{thm:fac4}
The rational loop group $\lupq$ is generated by the loops $p_{\alpha,L}$ and $g_{\alpha, \beta, L, K, W}$ given in Table \ref{tab:upq2}.
\end{thm}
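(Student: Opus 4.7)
The plan is to invoke Theorem \ref{Upq:ge} as the reduction step: that theorem already shows $\lupq$ is generated by the loops in Table \ref{tab:upq:p=q} (if $p=q$) or Table \ref{tab:upq:pneqq} (if $p\neq q$), so it suffices to write each such generator as a finite product of elements of Table \ref{tab:upq2}. The generator $p_{\alpha,L}$ appears verbatim in Table \ref{tab:upq2}, so nothing is required there. By Proposition \ref{prop:proj2}, the old generator $q_{\alpha,\beta,L}$ is identified with $g_{\alpha,\beta,L,sL,L^\perp\cap sL^\perp}$, i.e.\ the special case $K=sL$ of the new projective loop, so this case too is immediate.

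The substantive step is to remove the nilpotent generators. For $m_{\alpha,k,N}$ with $k=1,2$ and $\rank N=1$, I would quote Proposition \ref{prop:facm}, whose explicit identity
\[
g_{\beta,\alpha,L,K,W}\cdot g_{\alpha,\beta,L,sL,L^\perp\cap sL^\perp}=\Bigl(I+\tfrac{2\alpha-\beta-\bar\beta}{\lambda-\alpha}N\Bigr)\Bigl(I+\tfrac{|\alpha-\beta|^2}{(\lambda-\alpha)^2}N\Bigr)
\]
factors $m_{\alpha,2,N}$ (taking $\beta=\alpha+\mathrm i$) and $m_{\alpha,1,N}$ (choosing $\beta$ with $2\alpha-\beta-\bar\beta=1$, and then iterating on the $m_{\alpha,2,N}$-term) into products of $g_{\alpha,\beta,L,K,W}$ alone. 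In the case $p\neq q$, the remaining generator $n_{\alpha,1,N}$ is handled analogously by Proposition \ref{prop:facn}, where the same product $g_{\beta,\alpha,L,K,W}\cdot g_{\alpha,\beta,L,sL,L^\perp\cap sL^\perp}$ is shown, after the specific choice of $K$ furnished by Lemma \ref{fornupn2}, to reproduce $n_{\alpha,1,N}$ on the nose.

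Stitching these substitutions into the factorization supplied by Theorem \ref{Upq:ge} yields the desired expression for any element of $\lupq$. The main difficulty was already overcome in Propositions \ref{prop:facm} and \ref{prop:facn}, whose proofs rested on the delicate construction of an auxiliary isotropic line $K$ chosen as a perturbation of the Hermitian companion $sL$ (Lemmas \ref{difference} and \ref{fornupn2}), so that the difference $\pi_{L,K^\perp}-\pi_{L,sL^\perp}$ reproduces the anti-self-adjoint two-step nilpotent map $N$. The statement of Theorem \ref{thm:fac4} is therefore a straightforward synthesis of these prior steps, with essentially no new computations required.
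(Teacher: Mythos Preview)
Your proposal is correct and follows essentially the same approach as the paper: the paper's proof of Theorem \ref{thm:fac4} is literally the one-line deduction ``Then by Proposition \ref{prop:facm}, \ref{prop:facn}, we obtain the main theorem for $\lupq$,'' which tacitly relies on Theorem \ref{Upq:ge} and the identification $q_{\alpha,\beta,L}=g_{\alpha,\beta,L,sL,L^\perp\cap sL^\perp}$ from Proposition \ref{prop:proj2}, exactly as you outline. Your summary of how Propositions \ref{prop:facm} and \ref{prop:facn} dispatch the nilpotent generators is accurate, including the two choices of $\beta$ and the reduction of the $k=1$ case to the $k=2$ case.
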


\begin{rem}[\textbf{Generators of $\Un_{p,q}$ via projections}]
Restricting to the loop of extended real line, this main theorem implies an interesting and geometric construction of $\Un_{p,q}$ generalizing the classical unitary matrix construction by Hermitian orthogonal projections: 
The matrix group $\Un_{p,q}$ is generated by two kinds of elements via simple projections: $ e^{\ii \theta} \pi_L+\pi_{L^\perp}$ for any non-isotropic complex line $L$ and $ z \, \pi_{L}+ \dfrac{1}{\bar z} \, \pi_{K} + \pi_{W} $ for any two isotropic complex lines $L,K$ at general position, where $\C^n = L \oplus K \oplus W$, \; $W:=L^\perp \cap K^\perp$. 

In fact, the polar decomposition of $\Un_{p,q}$ implies a stronger result: $ r \, \pi_{L}+ \dfrac{1}{r} \, \pi_{sL} + \pi_{(L\oplus sL)^\perp} $ for real $r>0$ and isotropic $L$, together with $ e^{\ii \theta} \pi_L+\pi_{L^\perp}$ for non-isotropic $L$, are enough to generate 
$\Un_{p,q}$. 
\end{rem}

\begin{rem}
For the above theorem, we use $g_{\a,\beta, L,K,W}$ to replace $q_{\a,\beta,L}$ to decompose the nilpotent loops. In fact, the nilpotent loops generally can not be written as the product of pure $q_{\a,\beta,L}$. For example, in the $U_{1,1}$-case, assume
\[
m_{\a,1,N}=\prod_{i=1}^{l}q_{\a_i,\beta_i,L_i}.
\]
Compute derivatives on both sides of the equation to get
\[
\frac{-1}{(\l-\a)^2}N=\sum_{j=1}^{l}\prod_{i\leq j-1}q_{\a_i,\beta_i,L_i}(\frac{\a_j-\beta_j}{(\l-\beta_j)^2}\pi_{L_j}+\frac{\bar\beta_j-\bar\a_j}{(\l-
\bar\a_j)^2}\pi_{sL_j})\prod_{i\geq j+1}q_{\a_i,\beta_i,L_i}
\]
Multiplying the two sides of the equation by $(\l-\a)^2$, and evaluating at $\infty$ we get
\[
-N=\sum_{j=1}^{l}[(\a_j-\beta_j)\pi_{L_j}+(\bar\beta_j-\bar\a_j)\pi_{sL_j}].
\]
i.e.
\[
sN=-\sum_{j=1}^{l}[(\a_j-\beta_j)s\pi_{L_j}+(\bar\beta_j-\bar\a_j)\pi_{L_j}s].
\]
Then we have $0\neq {\tr}\, sN={\tr}\, (-\sum_{j=1}^{l}[(\a_j-\beta_j)s\pi_{L_j}+(\bar\beta_j-\bar\a_j)\pi_{L_j}s])=0$. It is a contradiction.
\end{rem}

\begin{rem}
So far we have stated the theorems for groups of rational loops that are normalized at $\infty$, as previously done in the literature.  All of them are true without this assumption, if we allow more general linear fractional transformations in the definition of these generators. This is due to the well-known fact that $\Aut(\C\pr^1)$ consists of such linear fractional (or M\"obius) transformations, also denoted $PGL_2 \C$. 
\end{rem}

\noindent
\textbf{Basic Examples}

\noindent Now we give some examples of the factorizations of nilpotent loops in the cases $\Un_{1,1}$ and $\Un_{1,2}$. They also gave the original hint for the general theorems.

We first discuss the case: $\Un_{1,1}$.
Let $\im N=\C\bpm 1&e^{\ii\gamma}\epm^t$, $\gamma\in\R$, then we can write $N=r\frac{1+e^{\ii\theta}}{1-e^{\ii\theta}}\bpm -1& e^{-\ii\gamma}\\-e^{\ii\gamma}&1\epm$, $r,\theta\in\R$. For $\a\in\R$, we have by Proposition \ref{prop:facm}
\begin{eqnarray*}
&&I+\frac{1}{(\l-\a)^2}\frac{1+e^{\ii\theta}}{2(1-e^{\ii\theta})}\bpm -1& e^{-\ii\gamma}\\-e^{\ii\gamma}&1\epm\\
&&=\left[\frac{\l-\a-\ii}{\l-\a}\frac{-1}{1- e^{-\ii\theta}}\bpm -1& e^{-\ii(\gamma+\theta)}\\-e^{\ii\gamma}&e^{-\ii\theta}\epm+\frac{\l-\a}{\l-\a+\ii}\frac{-1}{1- e^{\ii\theta}}\bpm -1& e^{-\ii\gamma}\\-e^{\ii(\gamma+\theta)}&e^{\ii\theta}\epm \right]\\
&&\left[\frac{\l-\a}{2(\l-\a-\ii)}\bpm 1&e^{-\ii\gamma}\\e^{\ii\gamma}& 1\epm+\frac{\l-\a+\ii}{2(\l-\a)}\bpm 1&-e^{-\ii\gamma}\\-e^{\ii\gamma}& 1\epm \right]
\end{eqnarray*}
\begin{eqnarray*}
&&I+\frac{1}{\l-\a}\frac{1+e^{\ii\theta}}{1-e^{\ii\theta}}\bpm -1& e^{-\ii\gamma}\\-e^{\ii\gamma}&1\epm\\
&&=\left[\frac{\l-\a+1}{\l-\a}\frac{-1}{1-e^{-\ii\theta}}\bpm -1& e^{-\ii(\gamma+\theta)}\\-e^{\ii\gamma}&e^{-\ii\theta}\epm+\frac{\l-\a}{\l-\a+1}\frac{-1}{1-e^{\ii\theta}}\bpm -1& e^{-\ii\gamma}\\-e^{\ii(\gamma+\theta)}&e^{\ii\theta}\epm \right]\\
&&\left[\frac{\l-\a}{2(\l-\a+1)}\bpm 1&e^{-\ii \gamma}\\e^{ \ii \gamma}& 1\epm+\frac{\l-\a+1}{2(\l-\a)}\bpm 1&-e^{-\ii \gamma}\\-e^{\ii \gamma}& 1\epm \right]\\
&&\left[\frac{\l-\a-\ii}{2(\l-\a)}\bpm 1&e^{-\ii \gamma}\\e^{\ii \gamma}& 1\epm+\frac{\l-\a}{2(\l-\a+\ii)}\bpm 1&-e^{-\ii \gamma}\\-e^{\ii \gamma}& 1\epm \right]\\
&&\left[\frac{\l-\a}{\l-\a-\ii}\frac{-1}{1-e^{-\ii\theta}}\bpm -1& e^{-\ii(\gamma+\theta)}\\-e^{\ii\gamma}&e^{-\ii\theta}\epm+\frac{\l-\a+\ii}{\l-\a}\frac{-1}{1-e^{\ii\theta}}\bpm -1& e^{-\ii\gamma}\\-e^{\ii(\gamma+\theta)}&e^{\ii\theta}\epm \right]
\end{eqnarray*}

For $\Un_{1,2}$, we give the factorization of the new nilpotent type $n_{\a,1,N}$. Assume $L=\C v$, $v=\bpm 1& \cos\theta & \sin\theta\epm^t$, $\theta\in\R$. We can choose $w=\bpm 0& -\sin\theta &\cos\theta\epm^t\in(L\oplus sL)^\perp$. Then by Lemma \ref{fornupn2}, we can get $\beta=\a-2$, $K=\C\bpm1& -\sin\theta & \cos\theta\epm^t$, 
\begin{eqnarray*}
 M=-2\bpm 1\\ \cos\theta \\ \sin\theta\epm \bpm 0& -\sin\theta & \cos\theta\epm, ~~~N=-2\bpm 0& -\sin\theta & \cos\theta\\  -\sin\theta &0 &1\\ \cos\theta &-1& 0\epm.
\end{eqnarray*}
Then by Proposition \ref{prop:facn}, we have
\begin{eqnarray*}
&& I+\frac{N}{\l-\a}+\frac{N^2}{2(\l-\a)^2}\\
&&=I+\frac{-2}{\l-\a}\bpm 0& -\sin\theta & \cos\theta\\  -\sin\theta &0 &1\\ \cos\theta &-1& 0\epm+\frac{2}{(\l-\a)^2}\bpm 1\\ \cos\theta \\ \sin\theta\epm \bpm 1& -\cos\theta & -\sin\theta\epm\\
&&=\left[ \frac{\l-\a+2}{\l-\a}\bpm 1\\ \cos\theta \\ \sin\theta\epm \bpm 1& \sin\theta &-\cos\theta\epm+\frac{\l-\a}{\l-\a+2}\bpm 1 \\ -\sin\theta \\ \cos\theta\epm \bpm 1 & -\cos\theta & -\sin\theta\epm + \right. \\
&& \left. \bpm 1 \\ \cos\theta-\sin\theta \\ \cos\theta+\sin\theta \epm \bpm -1 &  \cos\theta-\sin\theta & \cos\theta+\sin\theta\epm \right] \left[ \frac{\l-\a}{2(\l-\a+2)}\bpm 1 \\ \cos\theta \\ \sin\theta\epm \bpm 1 & \cos\theta & \sin\theta\epm \right. \\
&& \left. + \frac{\l-\a+2}{2(\l-\a)}\bpm -1 \\ \cos\theta \\ \sin\theta\epm \bpm -1 & \cos\theta & \sin\theta\epm+\bpm 0 \\ \sin\theta \\-\cos\theta\epm \bpm 0 & \sin\theta &-\cos\theta\epm \right].
\end{eqnarray*}

\section{Open problems}

We conclude the paper by formulating some open questions.\\

\indent (1). Here we just study generating theorems of the simplest rational loop groups. For loop groups with other noncompact real forms, for example, $\Un^{*}_{2n}$, even with the twisted forms, what are the generating theorems?\\

\indent (2). The proofs of our generating theorems are somewhat indirect: we first show generating theorems using projective and nilpotent loops, and then show that the nilpotent ones are generated by the projective ones. It would be desirable to have direct proofs, avoiding the usage of the nilpotent loops.\\

\indent (3). In the proof of our generating theorems, when the loop is written as the product of the generators, one needs to insert many fake singularities, leading to a long process of factorization. It would be interesting to find a set of generators that generate in such a way that fake singularities are forbidden. Maybe one needs to add more generators, for example in the $\Un_{p,q}$ case, some complexification of nilpotent loops: $I+\frac{1}{(\l-\a)(\l-\bar\a)}N$, $\a\in\C\setminus\R$, $N^2=0$, $N^*=-N$.\\

\indent (4). In \cite{Neil08}, the authors stated that the definition of rational loop groups depends on a choice of representation. All existing generating theorems are associated with the fundamental representation. It would be interesting to prove generating theorems associated with other representations. \\

\section*{Acknowledgement}

The authors would like to thank Professor ZHU Yongchang in the Hong Kong University of Science and Technology for many inspiring discussions about loop groups and Lie theory in general from the view point of algebra. The third author would like to express his  deepest gratitude for the support of HKUST during the project. He had also been supported by the NSF of China (Grant Nos. 10941002, 11001262), and the Starting Fund for Distinguished Young Scholars of Wuhan Institute of Physics and Mathematics (Grant No. O9S6031001).

\newcommand{\noopsort}[1]{}

\end{document}